\theoremstyle{plain} %--default
\newtheorem{theorem}    {Theorem}
\newtheorem{theoremletter}{Theorem}
\newtheorem{lemma}      [theorem]{Lemma}
\newtheorem{corollary}  [theorem]{Corollary}
\newtheorem{proposition}[theorem]{Proposition}
\newtheorem{conjecture} [theorem]{Conjecture}
\theoremstyle{definition}
\theoremstyle{remark}
\newtheorem{remark}    [theorem]          {Remark}
\numberwithin{equation}{section}
\newcommand{\SU}{\operatorname{SU}}
\newcommand{\diag}{\operatorname{diag}}
\newcommand{\Res}{\operatorname{Res}}
\newcommand{\cl}{\operatorname{cl}}
\newcommand{\spin}{\operatorname{spin}}
\newcommand{\SO}{\operatorname{SO}}
\newcommand{\GSpin}{\operatorname{GSpin}}
\newcommand{\Sp}{\operatorname{Sp}}
\newcommand{\AI}{\operatorname{AI}}
\newcommand{\sign}{\operatorname{sign}}
\newcommand{\Gal}{\operatorname{Gal}}
\newcommand{\rig}{\operatorname{rig}}
\newcommand{\Frss}{\operatorname{Fr-ss}}
\newcommand{\WD}{\operatorname{WD}}
\newcommand{\Iw}{\operatorname{Iw}}
\newcommand{\BC}{\operatorname{BC}}
\newcommand{\std}{\operatorname{std}}
\renewcommand{\sl}{\mathfrak{sl}}
\newcommand{\SL}{\operatorname{SL}}
\newcommand{\Ind}{\operatorname{Ind}}
\newcommand{\End}{\operatorname{End}}
\newcommand{\GSp}{\operatorname{GSp}}
\newcommand{\dR}{\operatorname{dR}}
\newcommand{\St}{\operatorname{St}}
\newcommand{\rk}{\operatorname{rk}}
\newcommand{\Sym}{\operatorname{Sym}}
\newcommand{\coker}{\operatorname{coker}}
\newcommand{\Fil}{\operatorname{Fil}}
\newcommand{\gr}{\operatorname{gr}}
\newcommand{\D}{\operatorname{D}}
\newcommand{\res}{\operatorname{res}}
\newcommand{\Hom}{\operatorname{Hom}}
\newcommand{\GL}{\operatorname{GL}}
\newcommand{\cycl}{\operatorname{cycl}}
\newcommand{\rec}{\operatorname{rec}}
\newcommand{\Tr}{\operatorname{Tr}}
\newcommand{\Frob}{\operatorname{Frob}}
\newcommand{\st}{\operatorname{st}}
\newcommand{\cris}{\operatorname{cris}}
\newcommand{\Ad}{\operatorname{Ad}}
\newcommand{\sqtimes}{\boxtimes}
\newcommand{\QQ}{\mathbb{Q}}
\newcommand{\RR}{\mathbb{R}}
\newcommand{\CC}{\mathbb{C}}
\newcommand{\ZZ}{\mathbb{Z}}
\renewcommand{\AA}{\mathbb{A}}
\newcommand{\Qp}{\QQ_p}
\newcommand{\Zp}{\ZZ_p}
\newcommand{\ol}[1]{\overline{#1}}
\renewcommand{\c}{\chi_{\cycl}}
\newcommand{\into}{\hookrightarrow}
\renewcommand{\mod}[1]{\text{ (mod }#1)}
\renewcommand{\phi}{\varphi}
\newcommand{\Drig}{\D_{\rig}^\dagger}
\begin{document}

\title{On symmetric power $\mathcal{L}$-invariants of Iwahori level Hilbert modular forms}
\author{Robert Harron}
\address{R. Harron: University of Wisconsin, Department of
  Mathematics, 480 Lincoln Drive, Madison, WI 53706}
\email{rharron@math.wisc.edu}
\thanks{The first author is partially supported by NSA Young Investigator Grant \#H98230-13-1-0223 and NSF RTG Grant ``Number Theory and Algebraic Geometry at the University of Wisconsin''.}

\author{Andrei Jorza}
\address{A. Jorza:  University of Notre Dame, Department of
  Mathematics, 275 Hurley Hall, Notre Dame, IN 46556}
\email{ajorza@nd.edu}
\date{\today}

\subjclass[2010]{11R23, 11F80, 11R34, 11F67}

\begin{abstract}
We compute the arithmetic $\mathcal{L}$-invariants (of
Greenberg--Benois) of twists of symmetric powers of $p$-adic
Galois representations attached to Iwahori level Hilbert modular
forms (under some technical conditions). Our method uses the automorphy of symmetric powers and the study of analytic Galois representations on $p$-adic families of automorphic forms over symplectic and unitary groups. Combining these families with some explicit plethysm in the representation theory of $\GL(2)$, we construct global Galois cohomology classes with coefficients in the symmetric powers and provide formulae for the $\mathcal{L}$-invariants in terms of logarithmic derivatives of Hecke eigenvalues.
\end{abstract}

\maketitle

\tableofcontents

\section*{Introduction}
The $p$-adic interpolation of special values of $L$-functions has been critical to understanding their arithmetic, providing, for instance, the link between values of the Riemann zeta function and the class groups of cyclotomic fields. However, it can happen that the $p$-adic $L$-function of a motive $M$ vanishes at a point of interpolation despite the fact that the classical $L$-value is non-zero. To recuperate an interpolation property, one predicts that a \textit{derivative} of the $p$-adic $L$-function is related to the $L$-value and one defines the (analytic) $\mathcal{L}$-invariant of $M$ as the ratio of these quantities. The name of the game then becomes to provide an \textit{arithmetic} meaning for the $\mathcal{L}$-invariant and show that it is non-zero.

The phenomenon of $\mathcal{L}$-invariants first arose in the work of Ferrero--Greenberg \cite{FeG78} on $p$-adic Dirichlet $L$-functions. Later on, in their work \cite{mazur-tate-teitelbaum} on formulating a $p$-adic analogue of the Birch and Swinnerton-Dyer conjecture, Mazur--Tate--Teitelbaum encountered this behaviour in the $p$-adic $L$-function of an elliptic curve over $\QQ$ with split multiplicative reduction at $p$. They conjectured a formula for the $\mathcal{L}$-invariant in terms of the $p$-adic Tate parameter of $E$ (and coined the term $\mathcal{L}$-invariant). This was proved in \cite{greenberg-stevens:p-adic-L} by Greenberg and Stevens by placing the elliptic curve in a Hida family, i.e.\ a $p$-adic family of modular forms of varying weight. Their proof proceeded in two steps: first, relate the Tate parameter to the derivative in the weight direction of the $U_p$-eigenvalue of the Hida family, and then use the functional equation of the two-variable $p$-adic $L$-function of the Hida family to relate this derivative to the analytic $\mathcal{L}$-invariant. We think of the Tate parameter formula as being a conjectural \textit{arithmetic} $\mathcal{L}$-invariant and the Greenberg--Stevens method as first linking it to a derivative of a Hecke eigenvalue and then appealing to analytic properties of several-variable $p$-adic $L$-functions to connect with the analytic $\mathcal{L}$-invariant. In the case of a $p$-ordinary motive $M$, Greenberg used an in-depth study of  the ordinary filtration to conjecture an arithmetic formula for the $\mathcal{L}$-invariant of $M$ in terms of its Galois cohomology (\cite[Equation (23)]{G94}). In \cite{benois:L-invariant}, Benois generalized this to the non-ordinary setting by passing to the category of $(\varphi,\Gamma)$-modules over the Robba ring and using triangulations. This article aims to establish the first step of the Greenberg--Stevens method for symmetric powers of Hilbert modular forms with respect to the Greenberg--Benois arithmetic $\mathcal{L}$-invariant; the second step being of a different nature (and the corresponding $p$-adic $L$-functions not known to exist!).

Before presenting the main results of this article, we introduce the trivial zero conjecture as formulated by Benois in
\cite{benois:L-invariant}. Let $p$ be an odd prime and let $\rho:G_{\QQ}\to \GL(n, \overline{\QQ}_p)$ be a continuous
Galois representation which is unramified at all but finitely
many primes, and semistable at $p$. One
has an $L$-function $L(\rho,s)=\prod_\ell L_\ell(\rho|_{G_{F_\ell}}, s)$
where $L_\ell(\rho|_{G_{\QQ_\ell}},s) = \det(1-\Frob_\ell
\ell^{-s}|\rho^{I_{\ell}})^{-1}$ for $\ell\neq p$ (here $\Frob_\ell$ is
the geometric Frobenius) while $L_p(\rho|_{G_{\QQ_p}}, s) = \det(1-\phi
p^{-s}|\D_{\cris}(\rho|_{G_{\QQ_p}}))^{-1}$. It is conjectured that $L(V,s)$ has meromorphic continuation to $\CC$ and 
that there exist Gamma factors $\Gamma(\rho,s)$ and
$\Gamma(\rho^*(1),s)$ such that
$\Gamma(\rho,s)L(\rho,s)=\varepsilon(\rho,s)\Gamma(\rho^*(1),-s)L(\rho^*(1),-s)$ for an epsilon factor
$\varepsilon(\rho,s)$ of the form $A\cdot B^s$. If $D\subset \D_{\st}(\rho|_{G_{\QQ_p}})$
is a regular submodule (see \S\ref{sect:regular submodules general}), it is
expected that there exists an analytic $p$-adic $L$-function
$L_p(\rho, D, s)$ such that
\[L_p(\rho,D,0)=\mathcal{E}(\rho,D)\frac{L(\rho,s)}{\Omega_\infty(\rho)}\]where
$\mathcal{E}(\rho,D)$ is a product of Euler-like factors and
$\Omega_\infty(\rho)$ is a transcendental period.
\begin{conjecture}[{\cite[Trivial Zero
  Conjecture]{benois:L-invariant}}]\label{conjecture:zero}If $\rho$ is critical,
$L(\rho,0)\neq 0$, $L_p(\rho,D,s)$ has order of vanishing $e$
at $s=0$, and $D$ satisfies the conditions of \cite[\S 2.2.6]{benois:L-invariant} then
\[\lim_{s\to 0}\frac{L_p(\rho,D,s)}{s^e}=(-1)^e\mathcal{L}(\rho,
D)\mathcal{E}^+(\rho,D)\frac{L(\rho,0)}{\Omega_\infty(\rho)}\]
where $\mathcal{E}^+(\rho,D)$ is defined in \cite[\S 2.3.2]{benois:L-invariant} and $\mathcal{L}(\rho,D)$ is the arithmetic $\mathcal{L}$-invariant defined in \cite[\S 2.2.2]{benois:L-invariant}.
\end{conjecture}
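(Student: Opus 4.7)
The statement to be ``proved'' is a conjecture, and in full generality it is deeply open; the paper itself explicitly sets out to establish only the first of the two Greenberg--Stevens steps for the specific case of symmetric powers of Hilbert modular forms. So the proof proposal I give is really a strategy, along the Greenberg--Stevens axis, for attacking the conjecture in the concrete cases addressed by this paper.

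The plan is to use the two-variable (more generally, several-variable) interpolation philosophy. First, I would place the Galois representation $\rho$ inside a $p$-adic analytic family $\{\rho_\kappa\}$ of Galois representations varying over an eigenvariety $\mathcal{X}$. Since the paper's $\rho$ is a twist of a symmetric power of a Hilbert modular Galois representation and the relevant symmetric powers are known to be automorphic, one gets such families by moving in the Coleman--Mazur / Buzzard / Chenevier eigenvarieties for unitary or symplectic groups. Over $\mathcal{X}$ the restriction to $G_{\QQ_p}$ carries a canonical triangulation of the associated $(\varphi,\Gamma)$-module over the Robba ring, and the regular submodule $D$ extends to a triangulation parameter in the family. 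Differentiating the Hodge--Tate--Sen weights and the crystalline Frobenius eigenvalues along directions in $\mathcal{X}$ produces cohomology classes in $H^1(G_{\QQ,S}, \rho)$ landing in the subspaces that enter Benois's formula; from these one reads off $\mathcal{L}(\rho,D)$ as an explicit logarithmic derivative of Hecke eigenvalues along the family. This is precisely the ``first step'' the paper carries out, and the main technical work is the plethysm identifying the needed symmetric-power classes from $\GL(2)$-families, together with the computation of the $(\varphi,\Gamma)$-module triangulation parameters on the eigenvariety.

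The second step would be to construct an analytic $p$-adic $L$-function $L_p(\rho_\kappa, D_\kappa, s)$ over $\mathcal{X} \times \mathcal{W}$ (weight space), show it satisfies a functional equation and appropriate interpolation at classical points where $L_p$ does not vanish, and then combine that functional equation with the vanishing of the Euler-like factors $\mathcal{E}(\rho_\kappa,D_\kappa)$ on the trivial-zero locus to write the leading Taylor coefficient of $s\mapsto L_p(\rho,D,s)$ as a product of logarithmic derivatives of Hecke eigenvalues times $\mathcal{E}^+(\rho,D)\cdot L(\rho,0)/\Omega_\infty(\rho)$. Matching this with the formula for $\mathcal{L}(\rho,D)$ obtained in the first step would then yield the conjectured equality, with the sign $(-1)^e$ produced by the functional equation and the number $e$ of Euler-like factors that vanish identically at $s=0$.

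The principal obstacle is that the second step is currently out of reach: the paper's introduction notes explicitly that the $p$-adic $L$-functions for symmetric powers of Hilbert modular forms are not known to exist in the required generality, so neither the analytic continuation, nor the functional equation, nor the interpolation at enough classical points is available to be exploited. Any realistic attempt at the conjecture must therefore first address the construction of $L_p(\rho,D,s)$ on the relevant eigenvariety, and I would expect this analytic input --- rather than the Galois-theoretic ``arithmetic'' computation which the present paper handles --- to be the decisive hard step.
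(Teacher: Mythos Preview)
Your proposal is correct in spirit and matches the paper's own framing: the statement is a conjecture, the paper does not prove it, and the two-step Greenberg--Stevens strategy you outline is exactly the template the paper invokes in its introduction. The paper carries out only the first step (expressing $\mathcal{L}(\rho,D)$ as logarithmic derivatives of Hecke eigenvalues via triangulations on eigenvarieties) and explicitly flags the second step as out of reach because the relevant $p$-adic $L$-functions are not known to exist; you have identified the same obstruction.
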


When $F$ is a number field and $\rho:G_F\to \GL(n,
\overline{\QQ}_p)$ is a geometric Galois representation
one may still formulate the above conjecture for
$\Ind_F^{\QQ}\rho$ and a collection of regular submodules $D_v\subset \D_{\st}(\rho|_{G_{F_v}})$ for places $v\mid p$. In that case, it natural to follow Hida and define the arithmetic $\mathcal{L}$-invariant as $\mathcal{L}(\rho,\{D_v\})=\mathcal{L}(\Ind_F^{\QQ}, D)$ where $D=\bigoplus D_v$ is a regular submodule of $\D_{\st}((\Ind_F^{\QQ}\rho)|_{G_{\QQ_p}})$.

We now describe the main results of this article.
Let $F$ be a totally real field in which $p$
splits completely and let $\pi$ be a cohomological Hilbert
modular form (cf.\ \S\ref{sect:hmf def}). Let
$V_{2n}=\Sym^{2n}\rho_{\pi,p}\otimes\det^{-n}\rho_{\pi,p}$ where
$\rho_{\pi,p}$ is the $p$-adic representation of $G_F$ attached
to $\pi$. In the following, when computing
$\mathcal{L}$-invariants of $V_{2n}$, we will assume that the Bloch--Kato Selmer group
$H^1_f(F, V_{2n})$ vanishes and technical condition C4 from \S
\ref{sect:benois-l-inv}. We will also assume that $\pi_v$ is Iwahori
spherical at $v\mid p$. Fixing a basis $e_1,e_2$ for
$\D_{\st}(\rho_{\pi,p}|_{G_{F_v}})$, we fix a regular submodule
$D_v\subset \D_{\st}(V_{2n}|_{G_{F_v}})$ as in \S
\ref{sect:regular submodules}. We remark now that the most difficult case occurs when $V_{2n}$ is crystalline at $v$ and this article provides the first results in this case when $n>3$ (in the semistable cases, the result is either quite easy or, at least in the $p$-ordinary case, follows from the work of Hida \cite{hida:mazur-tate-teitelbaum}).

We begin with the case of $V_2$. Suppose that if $\pi_v$ for
$v\mid p$ is an unramified principal series then the Satake
parameters are distinct. Further suppose that the Hilbert
eigenvariety $\mathcal{E}$ around $\pi$ and the refinement of
$\pi_v$ giving the ordering $e_1,e_2$ is \'{e}tale over the weight
space around the point corresponding to $\pi$ (cf. \S
\ref{sect:sym2l}). Finally, let $a_v$ be the analytic Hecke
eigenvalue corresponding to the double coset $[\Iw
\diag(p,1)\Iw]$.
\begin{theoremletter}[Theorem \ref{t:l invariant formula hmf}]\label{thmA}
Writing $a'_v$ for the derivative in the direction $(1,\ldots, 1; -1)$ in the weight space, we have
\[\mathcal{L}(V_2, \{D_v\}) = \prod_{v\mid p}
\left(\frac{-2a_v'}{a_v}\right)\]
\end{theoremletter}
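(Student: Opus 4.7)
The plan is to implement the Greenberg--Stevens strategy in the symmetric-square setting: since $V_2 = \Ad^0\rho_{\pi,p}$, first-order deformations of $\rho_{\pi,p}$ produce classes in $H^1(G_F, \Ad\rho_{\pi,p})$ whose trace-zero projections land in $H^1(G_F, V_2)$. The Benois arithmetic $\mathcal{L}$-invariant $\mathcal{L}(V_2,\{D_v\})$ is then computed by restricting such a class to each $v\mid p$ and extracting its image in the rank-one exceptional subquotient of $\Drig(V_2|_{G_{F_v}})$---the subquotient with trivial parameter, responsible for the constant Frobenius eigenvalue $1$ on $\D_{\cris}(V_2|_{G_{F_v}})$ and hence for the trivial zero at $s=0$.

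Under the \'{e}taleness hypothesis on $\mathcal{E}\to\mathcal{W}$ at $\pi$ with the chosen refinement, there is a rigid-analytic family $\rho_\mathcal{E}:G_F\to \GL_2(\mathcal{O}_{\mathcal{E},\pi})$ deforming $\rho_{\pi,p}$, whose local restriction at each $v\mid p$ is triangulated with parameters $(\delta_{1,\mathcal{E}}, \delta_{2,\mathcal{E}})$ specializing at $\pi$ to the chosen refinement $(e_1,e_2)$ of $\D_{\st}$. Differentiating $\rho_\mathcal{E}$ along the one-parameter subfamily of $\mathcal{E}$ cut out by the weight-space direction $(1,\ldots,1;-1)$, and projecting onto the trace-zero part, produces the global cocycle
\[
c \in H^1(G_F, V_2).
\]
The hypotheses $H^1_f(F,V_2)=0$ and C4 ensure that $c$ spans the Selmer-type line appearing in Benois's construction, so that $\mathcal{L}(V_2,\{D_v\})$ is determined by the local images of $c$ in the exceptional subquotients.

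Using the infinitesimal deformation theory of triangulated $(\phi,\Gamma)$-modules in rigid-analytic families (via the Sen operator, after Kedlaya--Pottharst--Xiao and Bellaïche--Chenevier), the image of $c|_{G_{F_v}}$ in the exceptional subquotient lies in $H^1(G_{F_v}, \overline{\QQ}_p)\cong \overline{\QQ}_p\cdot\log\chi_{\cycl}\oplus\overline{\QQ}_p\cdot\operatorname{ord}_p$, with coordinates computed from the derivative of $\delta_{1,\mathcal{E}}/\delta_{2,\mathcal{E}}$ in our direction: the Hodge--Tate--Sen weight derivative (equal to $+1$ by the normalization of $(1,\ldots,1;-1)$) furnishes the $\log\chi_{\cycl}$-coefficient, while the logarithmic derivative of its uniformizer-value furnishes the $\operatorname{ord}_p$-coefficient. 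Because the direction $(1,\ldots,1;-1)$ is distinguished by the property that the product $a_v\cdot b_v$ of the two analytic Hecke eigenvalues is infinitesimally constant, this second coefficient reduces to $2\,a_v'/a_v$. The Benois recipe then outputs $\mathcal{L}_v=-2a_v'/a_v$ at each place, and multiplying over $v\mid p$ yields the stated formula.

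The main technical obstacle is the local comparison: showing rigorously that the cocycle coming from eigenvariety differentiation represents, after projection to the exceptional subquotient, the class appearing intrinsically in Benois's $(\phi,\Gamma)$-module-cohomological construction of the $\mathcal{L}$-invariant, and extracting its coordinates in the correct basis of $H^1(G_{F_v},\overline{\QQ}_p)$ with the correct signs and normalizations. This requires careful use of analytic triangulations in families and precise tracking of conventions relating Sen weights, $p$-adic logarithms, analytic Hecke eigenvalues, and local class field theory. The distinct-Satake-parameters hypothesis is essential to make the local triangulation unambiguous, and the \'{e}taleness of $\mathcal{E}$ over $\mathcal{W}$ guarantees that weight-space differentiation faithfully captures the variations of the local triangulation parameters.
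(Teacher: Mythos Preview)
Your approach is essentially the same as the paper's: construct a global cocycle by differentiating the analytic Galois representation on the Hilbert eigenvariety, project to the trace-zero part $V_2\subset\End(\rho_{\pi,p})$, and read off the Benois $\mathcal{L}$-invariant from the image of its local restrictions in the exceptional rank-one subquotient of $\Drig(V_{2,v})$ using the analytic triangulation in families. The paper simply packages the last step as a special case of a general formula (its Proposition on $\mathcal{L}$-invariants via plethysm coefficients $B_{m,n,i}$), which for $m=n=1$ collapses to $B_{1,1,0}=1$, $B_{1,1,1}=-1$, i.e.\ exactly your ``difference of diagonal log-derivatives'' computation on $\delta_1/\delta_2$.

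One point to correct: your explanation that the direction $(1,\ldots,1;-1)$ is ``distinguished by the property that the product $a_v\cdot b_v$ of the two analytic Hecke eigenvalues is infinitesimally constant'' is not right and not the reason this direction is used. In the paper's normalization the analytic functions $F_1=a_v^{-1}p^{3/2}$ and $F_2=a_vp^{1/2}$ have $F_1F_2=p^2$, which is constant in \emph{every} direction; conversely the determinant character (and hence $\delta_1\delta_2$) is not constant in the direction $(1,\ldots,1;-1)$ since $\kappa_1+\kappa_2=w-1$ moves. The actual role of this direction is purely to make the denominator $\sum_iB_{1,1,i-1}\nabla\kappa_i$ nonzero and equal to $-1$: with $\kappa_1=(w-k_v)/2$, $\kappa_2=(w+k_v-2)/2$ one gets $\nabla\kappa_1=-1$, $\nabla\kappa_2=0$, while the numerator contributes $(-a_v'/a_v)-(a_v'/a_v)=-2a_v'/a_v$. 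Any direction with $B_{1,1,0}\nabla\kappa_1+B_{1,1,1}\nabla\kappa_2\neq 0$ would do; the formula is independent of the choice. So drop the ``product constant'' rationale and simply record the direct computation of $\nabla\kappa_i$ and $\widetilde{\nabla}F_i$ in the chosen direction.
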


Such a formula for $p$-ordinary elliptic modular forms was obtained first by Hida \cite{hida:ad0} (under a condition on the Galois deformation ring) and then by the first author \cite{harron:thesis} (under the assumption $H^1_f(\QQ, V_2)$ used here). In \cite{mok:adjoint-L-invariant}, Mok proved this result for finite slope elliptic modular forms.

For higher symmetric powers, we must assume $\pi$ is not CM, so that certain of its symmetric powers are actually cuspidal. When $F=\QQ$, the CM case has been dealt with in \cite{harron:CM} and \cite{harron-lei:CM}.

For the case of $V_6$, we do as in \cite{harron:thesis,harron:sym6-ordinary} and use the Ramakrishnan--Shahidi lift $\Pi$ of $\Sym^3\pi$ to $\GSp(4)$. Suppose that $\pi$ is not CM and that if $\pi_v$ for
$v\mid p$ is an unramified principal series then the ratio of the Satake
parameters is not in $\mu_{60}$ (this condition is necessary for the existence of global triangulations). Further suppose that the genus 2 Siegel--Hilbert
eigenvariety $\mathcal{E}$ around $\Pi$ and the $p$-stabilization of
$\Pi_v$ giving the ordering $e_1,e_2$ is \'{e}tale over the weight
space around the point corresponding to $\Pi$ (cf. \S
\ref{sect:sym6l}). Finally, let $a_{v,1}$ and $a_{v,2}$ be the analytic Hecke
eigenvalues corresponding to the double cosets $[\Iw\diag(1,p^{-1}, p^{-2}, p^{-1})\Iw]$ and $[\Iw\diag(1,1,p^{-1}, p^{-1})\Iw]$.

\begin{theoremletter}[Theorem \ref{t:l invariant formula gsp4}]\label{thmB}
If $\overrightarrow{u}=(u_1,u_2; u_0)$ is any direction in
the weight space, i.e. $u_1\geq u_2\geq 0$,
then
\[\mathcal{L}(V_6, \{D_v\})=\prod_{v\mid p}
\left(\frac{-4\widetilde{\nabla}_{\overrightarrow{u}}a_{v,2}+3\widetilde{\nabla}_{\overrightarrow{u}}a_{v,1}}{u_1-2u_2}\right)\]
where we write $\widetilde{\nabla}_{\overrightarrow{u}}f =
(\nabla_{\overrightarrow{u}}f)/f$ for the logarithmic directional derivative of $f$ evaluated at the point above $\Pi$. 
\end{theoremletter}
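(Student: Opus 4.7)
The plan is to follow the Greenberg--Stevens strategy on the Siegel--Hilbert eigenvariety, replacing the $\GL(2)$ setup of Theorem \ref{thmA} by a $\GSp(4)$ setup via the Ramakrishnan--Shahidi lift, as in \cite{harron:thesis,harron:sym6-ordinary}. Since $\pi$ is not CM, the lift $\Pi$ of $\Sym^3\pi$ is a cuspidal cohomological automorphic representation of $\GSp(4,\AA_F)$ whose spin $p$-adic Galois representation, after the appropriate central twist, is $\Sym^3\rho_{\pi,p}$. Expanding $\End(\Sym^3\rho_{\pi,p})$ by Clebsch--Gordan yields the $\GL(2)$-decomposition
\[\Ad^0(\Sym^3\rho_{\pi,p})\cong V_6\oplus V_4\oplus V_2,\]
exhibiting $V_6$ as a direct summand of the adjoint of the spin representation. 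This is what allows $V_6$-valued cohomology classes to be extracted from deformations of $\Pi$.

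By the étaleness of $\mathcal{E}$ over weight space at $\Pi$, each direction $\overrightarrow{u}=(u_1,u_2;u_0)$ determines an analytic deformation of $\Pi$ over $\overline{\QQ}_p[\epsilon]/(\epsilon^2)$ and hence of $\rho_\Pi$; differentiating produces a global cohomology class $c_{\overrightarrow{u}}\in H^1(G_{F,S},\Ad^0\rho_\Pi)$, and projection onto the $V_6$-summand yields $c_{\overrightarrow{u}}^{V_6}\in H^1(G_{F,S},V_6)$. The hypotheses that $H^1_f(F,V_6)=0$ and that condition C4 holds ensure that, as $\overrightarrow{u}$ varies, the local restrictions of these classes at $v\mid p$ span the quotient $H^1(D_v,V_6)/H^1_f$ controlling the Greenberg--Benois $\mathcal{L}$-invariant.

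Locally at each $v\mid p$, the chosen refinement of $\Pi_v$ fixing the ordering $e_1,e_2$ corresponds to a triangulation of $\Drig(\rho_\Pi|_{G_{F_v}})$ which, by the Kedlaya--Pottharst--Xiao and Liu global triangulation theorems, extends analytically over $\mathcal{E}$; the assumption that the ratio of Satake parameters avoids $\mu_{60}$ guarantees non-criticality for all induced triangulations on the summands of $\Ad^0\rho_\Pi$, and in particular on the $V_6$-summand. This places $c_{\overrightarrow{u}}^{V_6}|_{G_{F_v}}$ inside the $(\varphi,\Gamma)$-cohomology cut out by the regular submodule $D_v$, and Benois's formalism then identifies $\mathcal{L}(V_6,\{D_v\})$ as a product over $v\mid p$ of a ratio of logarithmic directional derivatives, in direction $\overrightarrow{u}$, of the refined Frobenius eigenvalues on $\D_{\st}(\rho_\Pi|_{G_{F_v}})$ along $\mathcal{E}$.

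The concluding step is to convert these Frobenius-eigenvalue derivatives into derivatives of the Iwahori-level Hecke eigenvalues $a_{v,1}$ and $a_{v,2}$. Each of the two Iwahori Hecke operators is, on the chosen refinement, a product of refined Frobenius eigenvalues times an explicit power of $p$ depending on the weight; logarithmic differentiation in direction $\overrightarrow{u}$, together with tracking the $V_6$-projection through the plethysm $\Ad^0(\Sym^3)\cong V_6\oplus V_4\oplus V_2$, pins down the precise linear combination appearing as the numerator of the $\mathcal{L}$-invariant, namely $-4\widetilde{\nabla}_{\overrightarrow{u}}a_{v,2}+3\widetilde{\nabla}_{\overrightarrow{u}}a_{v,1}$. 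The denominator $u_1-2u_2$ arises as the Hodge--Tate weight gap intrinsic to the $V_6$-component of $\overrightarrow{u}$. The principal obstacle lies in exactly this final matching: one must identify the plethysm projection on the Galois/$\D_{\st}$ side with the correct linear combination on the Hecke side, and verify that the Greenberg--Benois regularity and transversality conditions descend compatibly to the $V_6$-summand. Independence of the formula from $\overrightarrow{u}$ (when $u_1\neq 2u_2$) then follows automatically from the one-dimensionality of the relevant local $\mathcal{L}$-invariant datum at each $v\mid p$.
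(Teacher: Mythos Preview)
Your strategy is exactly the one the paper follows: pass to the Ramakrishnan--Shahidi lift $\Pi$ on $\GSp(4)$, differentiate the analytic spin Galois representation along the eigenvariety to produce a class in $H^1(G_{F,S},\End(\Sym^3\rho_{\pi,p}))$, project to the $V_6$-summand via the Clebsch--Gordan decomposition, and read off the $\mathcal{L}$-invariant from the triangulation data. The paper packages the ``final matching'' you flag as the main obstacle into two clean pieces: a general formula (Proposition~\ref{p:l-invariant formula}) expressing $\mathcal{L}(V_{2n},\{D_v\})$ as $\prod_v\bigl(-\sum_iB_{m,n,i-1}\widetilde{\nabla}F_i\big/\sum_iB_{m,n,i-1}\nabla\kappa_i\bigr)$, where the $B_{m,n,i}$ are explicit inverse Clebsch--Gordan coefficients computed in the appendix, together with the explicit identification of $\kappa_i$ and $F_i$ in terms of $\mu_{v,j}$ and $a_{v,j}$ from Corollary~\ref{c:gsp4 triangulation}. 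Plugging in $(B_{3,3,i})_i\sim(1,-3,3,-1)$ and the four $(\kappa_i,F_i)$ then gives the stated numerator and the denominator $u_1-2u_2$ by direct linear algebra. Your outline is correct but stops short of this computation; once you carry it out (or invoke the paper's general formula), the proof is complete.
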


This generalizes the main result of \cite{harron:thesis,harron:sym6-ordinary} which compute the arithmetic $\mathcal{L}$-invariant of $V_6$ in the case of $p$-ordinary elliptic modular forms.

The first computation of $\mathcal{L}$-invariants of $V_{2n}$ for general $n$ we present
uses symplectic eigenvarieties and is, for now, conditional on
the stabilization of the twisted trace formula (this is
necessary for the construction of an analytic Galois
representation). Suppose $\pi$ is not CM, and that for $v\mid p$ such that
$\pi_v$ is unramified the ratio of the Satake parameters is not in 
$\mu_{\infty}$ (again, necessary for the existence of global triangulations). Suppose $\pi$ satisfies the hypotheses of Theorem \ref{t:automorphy of sym} (2) and let
$\Pi$ be a suitable twist of the cuspidal representation of $\GSp(2n, \AA_F)$ from Theorem \ref{t:GL(2n+1) to Sp(2n)}. Let $\mathcal{E}$ be Urban's eigenvariety for $\GSp(2n)$ and let $a_{v,i}$ be the analytic Hecke eigenvalues from the proof of Lemma \ref{l:gsp eigenvariety}. Suppose that the eigenvariety $\mathcal{E}$ is \'{e}tale over the weight space at the $p$-stabilization of $\Pi$ corresponding to the ordering $e_1,e_2$. 

\begin{theoremletter}[Theorem \ref{t:l invariant formula gsp}]\label{thmC}
If $\overrightarrow{u}=(u_1,\ldots,u_n;u_0)$ is any direction in
the weight space, then
\[\mathcal{L}(V_{4n-2}, \{D_v\})=\prod_{v\mid
  p}-\left(\frac{B_{n}\widetilde{\nabla}_{\overrightarrow{u}}a_{v,1}+B_{1}(\widetilde{\nabla}_{\overrightarrow{u}}a_{v,n-1}-2\widetilde{\nabla}_{\overrightarrow{u}}a_{v,n})+\sum_{i=2}^{n-1}B_{i}(\widetilde{\nabla}_{\overrightarrow{u}}a_{v,i-1}-\widetilde{\nabla}_{\overrightarrow{u}}a_{v,i})}{\sum_{i=1}^nu_iB_{n+1-i}}\right)\]
where we write $\displaystyle B_i=(-1)^i\binom{2n}{n+i}i$.
\end{theoremletter}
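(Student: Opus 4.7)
The plan is to follow the Greenberg--Stevens paradigm using Urban's symplectic eigenvariety $\mathcal{E}$ for $\GSp(2n)$, in direct parallel to the proofs of Theorems \ref{thmA} and \ref{thmB}. The starting point is plethysm for $\GL(2)$: by Theorem \ref{t:GL(2n+1) to Sp(2n)}, the standard $2n$-dimensional Galois realization of $\rho_\Pi$ is isomorphic to $\Sym^{2n-1}\rho_{\pi,p}\otimes\chi$ for some character $\chi$, where the ``suitable'' twist is chosen so that $\chi^2=(\det\rho_{\pi,p})^{-(2n-1)}$. Applying $\Sym^2$ and tracking twists then yields the $G_F$-equivariant decomposition
\[
\Sym^2\rho_\Pi\;\cong\;V_{4n-2}\;\oplus\;\bigoplus_{k=1}^{n-1}\Sym^{4n-2-4k}\rho_{\pi,p}\otimes(\det\rho_{\pi,p})^{2k-(2n-1)},
\]
and with it a projection $\mathrm{pr}\colon\Sym^2\rho_\Pi\twoheadrightarrow V_{4n-2}$ onto the top summand.

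Second, since $\Ad^0\rho_\Pi\cong\Sym^2\rho_\Pi$ for $\GSp(2n)$, the \'etaleness hypothesis lets us lift the direction $\overrightarrow{u}$ uniquely to a tangent vector of $\mathcal{E}$ at the chosen $p$-stabilization of $\Pi$, giving a first-order deformation of $\rho_\Pi$ over the dual numbers and hence a global cocycle $\kappa^{\Ad}\in H^1(F,\Sym^2\rho_\Pi)$. Its image $\kappa:=\mathrm{pr}_*\kappa^{\Ad}\in H^1(F,V_{4n-2})$ is the global class to which the Greenberg--Benois formalism is then applied: the assumed vanishing of $H^1_f(F,V_{2n})$, together with Condition~C4 and the regularity of each $D_v$, guarantees that $\kappa$ maps non-trivially into the appropriate local quotient at every $v\mid p$, so that the determinant formula for $\mathcal{L}(V_{4n-2},\{D_v\})$ of \cite{benois:L-invariant} can be evaluated against $\kappa$ to produce the claimed ratio.

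For the local computation at each $v\mid p$, the global triangulation of $\Drig(\rho_\Pi|_{G_{F_v}})$ deforms analytically over $\mathcal{E}$; this is where both the stabilization of the twisted trace formula (needed for the existence of the analytic Galois representation on $\mathcal{E}$) and the $\mu_\infty$-condition on Satake parameters (needed for the global triangulation theorem) enter. The logarithmic derivatives $\widetilde{\nabla}_{\overrightarrow{u}}a_{v,i}$ record the first-order motion of the parameters of this triangulation; through $\mathrm{pr}$ and the explicit ordered basis of $\D_{\cris}(V_{4n-2}|_{G_{F_v}})$ inherited from symmetric powers of the refinement basis $e_1,e_2$, the restriction $\kappa|_{G_{F_v}}$ becomes a specific $\ZZ$-linear combination of the $\widetilde{\nabla}_{\overrightarrow{u}}a_{v,i}$. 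The coefficients $B_i=(-1)^i\binom{2n}{n+i}i$ emerge from this matching as the combinatorial weights with which the distinct root directions in the torus of $\GSp(2n)$ contribute to the top isotypic component of $\Sym^2\rho_\Pi$.

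The main obstacle is precisely this last combinatorial step: carefully matching the abstract plethysm decomposition with Urban's parametrization of the Hecke eigenvalues $a_{v,i}$ from the proof of Lemma~\ref{l:gsp eigenvariety}, and then simplifying the Greenberg--Benois determinant into the stated ratio. A natural way to organize this is to verify the formula at small $n$ (with $n=2$ essentially recovering Theorem \ref{thmB}) and then argue inductively in $n$ using the standard $\GL(2)$-plethysm identities together with Hermite reciprocity; the remainder of the argument runs parallel to the proofs of Theorems \ref{thmA} and \ref{thmB}.
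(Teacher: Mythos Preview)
Your proposal has a fundamental misidentification of the Galois representation being deformed. You claim that ``the standard $2n$-dimensional Galois realization of $\rho_\Pi$ is isomorphic to $\Sym^{2n-1}\rho_{\pi,p}\otimes\chi$'' and then apply $\Sym^2$ (using $\Ad^0\cong\Sym^2$ for $\GSp$). This is modeled on Theorem~\ref{thmB}, where the \emph{spin} Galois representation of $\GSp(4)$ is $4$-dimensional and interpolates $\Sym^3\rho_{\pi,p}$. But for general $n$ there is no $2n$-dimensional Galois representation on the $\GSp(2n)$ eigenvariety: the $L$-group is $\GSpin(2n+1)$, whose spin representation is $2^n$-dimensional. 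The representation actually used in the paper is the $(2n+1)$-dimensional \emph{standard} one $\rho_{\mathcal{E},\std}$ of Theorem~\ref{t:galois gsp}(2) and Lemma~\ref{l:gsp eigenvariety}, coming from the transfer $\GL(2n+1)\to\Sp(2n)$ of Theorem~\ref{t:GL(2n+1) to Sp(2n)}; at the point $z_0$ it specializes to (a twist of) $\Sym^{2n}\rho_{\pi,p}$, not $\Sym^{2n-1}$.

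Consequently the relevant plethysm is $\End(\Sym^{2n}V)\cong\bigoplus_{k=0}^{2n}V_{2k}$ (Lemma~\ref{l:derivative galois}), and the projection one needs is $\End(\Sym^{2n}V)\twoheadrightarrow V_{4n-2}$, governed by the inverse Clebsch--Gordan coefficients $B_{2n,2n-1,j}$ of Proposition~\ref{p:End to Sym^n}, not by $\Sym^2(\Sym^{2n-1}V)$. Feeding the triangulation parameters $\kappa_{n+1\pm i}=\pm(\mu_{v,n+1-i}+i)$ and $F_{n+1\pm i}$ from the proof of Lemma~\ref{l:gsp eigenvariety} into Proposition~\ref{p:l-invariant formula} with $m=2n$, the symmetry $\kappa_{n+1+i}=-\kappa_{n+1-i}$, $\widetilde\nabla F_{n+1+i}=-\widetilde\nabla F_{n+1-i}$ collapses the sum so that only the antisymmetric combinations $B_i:=B_{2n,2n-1,n+i}-B_{2n,2n-1,n-i}$ survive; the closed formula of Remark~\ref{r:Bnki} then gives $B_i=(-1)^i\binom{2n}{n+i}i$ up to a global scalar. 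No induction on $n$ or Hermite reciprocity is involved. (Also, the Selmer hypothesis you invoke should be $H^1_f(F,V_{4n-2})=0$, not $H^1_f(F,V_{2n})=0$.)
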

As mentioned earlier, the results of Hida \cite{hida:mazur-tate-teitelbaum} address the case where $V_{2n}$ is ordinary and semistable but not crystalline.

Our second computation of $\mathcal{L}$-invariants for $V_{2n}$ uses unitary groups, is also conditional on the stabilization of the twisted trace formula, and is more restrictive. It however has the advantage that work in progress of Eischen--Harris--Li--Skinner will provide several-variable $p$-adic $L$-functions for Hida families on unitary groups and thus the second step of the Greenberg--Stevens method may be closer at hand. As above suppose $\pi$ is not CM, and that for $v\mid p$ such that
$\pi_v$ is unramified the ratio of the Satake parameters is not in 
$\mu_{\infty}$. Assume Conjecture \ref{t:GL(n) to U_n} and suppose $\pi$ satisfies the hypotheses of
Theorem \ref{t:automorphy of sym} (2) and Proposition \ref{p:sym hmf to unitary} (the latter requires $\pi_v$ to be special at two finite places not above $p$). Let $E/F$ the CM extension and $\Pi$ the cuspidal representation of $U_{4n}(\AA_F)$ to which there is a transfer of a twist of $\Sym^{4n-1}\pi$ as in Proposition \ref{p:sym hmf to unitary}. Let $\mathcal{E}$ be Chenevier's eigenvariety and let $a_{v,i}$ be the analytic Hecke eigenvalues from the proof of Corollary \ref{c:unitary triangulation}. Suppose that the eigenvariety $\mathcal{E}$ is \'{e}tale over the weight space at the $p$-stabilization of $\Pi$ corresponding to the ordering $e_1,e_2$. 

\begin{theoremletter}[Theorem \ref{t:l invariant formula unitary}]\label{thmD}
If $\overrightarrow{u}=(u_1,\ldots,u_n;u_0)$ is any direction in
the weight space, then
\[\mathcal{L}(V_{8n-2}, \{D_v\})=\prod_{v\mid
  p}\left(\frac{-\sum_{i=1}^{4n}(-1)^{i-1}\binom{4n-1}{i-1}\widetilde{\nabla}_{\overrightarrow{u}}a_{v,i}}{\sum_{i=1}^{4n}(-1)^{i-1}\binom{4n-1}{i-1}u_i}\right)\]
and
\[\mathcal{L}(V_{8n-6}, \{D_v\})=\prod_{v\mid
  p}\left(\frac{-\sum_{i=1}^{4n}B_{i-1}\widetilde{\nabla}_{\overrightarrow{u}}a_{v,i}}{\sum_{i=1}^{4n}u_iB_{i-1}}\right).\]
Here $B_i=B_{4n-1,4n-3,i}$ is the inverse Clebsch--Gordan
coefficient of Proposition \ref{p:End to Sym^n}, up to a scalar independent of $i$ given by
\[B_i= (-1)^i\binom{4n-1}{i}((4n-1)^3-(4i+1)(4n-1)^2+(4i^2+2i)(4n-1)-2i^2).\]
\end{theoremletter}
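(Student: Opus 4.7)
The plan is to run the Greenberg--Benois strategy of the earlier sections on Chenevier's eigenvariety $\mathcal{E}$ for the unitary group $U_{4n}$, this time isolating two distinct plethysm summands of $\End(\rho_{\Pi,p})$ at the transferred automorphic representation $\Pi$.

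First, I use the étaleness hypothesis at the chosen $p$-stabilized point above $\Pi$ to identify $\mathcal{E}$ locally with the weight space, producing analytic Hecke eigenvalues $a_{v,i}$ and analytic characters $\kappa_{v,i}$ of the local tori interpolating the refinement of $\Pi_v$ for $v\mid p$. Conjecture \ref{t:GL(n) to U_n} together with the results on stabilization of the twisted trace formula cited earlier yield an analytic family of Galois (pseudo-)representations on $\mathcal{E}$, and Kedlaya--Pottharst--Xiao-type global triangulation results produce, at each $v\mid p$, an analytic triangulation of $\Drig(\rho_{\mathcal{E}}|_{G_{F_v}})$ whose graded pieces specialize at $\Pi$ to the chosen refinement.

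Next, differentiating the triangulation along an arbitrary direction $\overrightarrow{u}$ in weight space, the derivatives of the triangulation characters at $\Pi$ produce extensions between successive graded pieces. Reorganizing these via the vanishing of $H^1_f(F,V_{2n})$ and condition C4 of \S\ref{sect:benois-l-inv} yields a global cohomology class in $H^1(F,\End(\rho_{\Pi,p}))$. I then invoke the plethysm decomposition
\[\End(\rho_{\Pi,p})\simeq \Sym^{4n-1}\rho_{\pi,p}\otimes(\Sym^{4n-1}\rho_{\pi,p})^*\simeq \bigoplus_{k=0}^{4n-1}V_{2k}\]
and project the class onto the isotypic components $V_{8n-2}$ (the $k=4n-1$ piece) and $V_{8n-6}$ (the $k=4n-3$ piece). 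The explicit coefficients of these projections are the classical Clebsch--Gordan coefficients $(-1)^{i-1}\binom{4n-1}{i-1}$ for $V_{8n-2}$ and the inverse Clebsch--Gordan coefficients $B_i$ of Proposition \ref{p:End to Sym^n} for $V_{8n-6}$.

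Finally, I match the projected class against Benois's definition of the arithmetic $\mathcal{L}$-invariant from \cite[\S 2.2.2]{benois:L-invariant}. Since the Hodge--Tate weights of the triangulation characters depend linearly on $\overrightarrow{u}$ with the very coefficients governing the plethysm projectors, the $\mathcal{L}$-invariant emerges as the ratio of (a) the logarithmic directional derivatives of the $a_{v,i}$ weighted by the projector, to (b) the directional derivatives of the Hodge--Tate weights weighted identically, yielding the two displayed formulas. Independence of $\overrightarrow{u}$ is formal from the étaleness of $\mathcal{E}$ at $\Pi$. The main technical obstacle is the $V_{8n-6}$ projection: establishing that the inverse Clebsch--Gordan projector onto $V_{8n-6}\subset\End(\Sym^{4n-1}\rho_{\pi,p})$ is given by the stated quartic-in-$i$ closed form, and verifying that this projector is compatible with the differentiation of the triangulation; the $V_{8n-2}$ case, by contrast, reduces to the naive symmetrization projector. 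A secondary difficulty is that Chenevier's eigenvariety lives on a definite unitary group, so one must use Proposition \ref{p:sym hmf to unitary} (whose two special-place assumption is invoked precisely here) to transfer a suitable twist of $\Sym^{4n-1}\pi$ to an automorphic form on the definite inner form parametrized by $\mathcal{E}$.
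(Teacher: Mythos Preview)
Your overall strategy matches the paper's: build a cohomology class by differentiating the analytic family on Chenevier's eigenvariety, project via plethysm, and read off the $\mathcal{L}$-invariant from Proposition~\ref{p:l-invariant formula}. The identification of the relevant Clebsch--Gordan coefficients is also correct and is exactly what Remark~\ref{r:Bnki} supplies.

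However, there is a genuine gap. You assert that differentiation ``yields a global cohomology class in $H^1(F,\End(\rho_{\Pi,p}))$,'' but the Galois representation attached to $\Pi$ on the unitary eigenvariety is a representation of $G_E$, not of $G_F$ (see Theorem~\ref{t:galois unitary}): the base change $\psi\otimes\BC_{E/F}\Sym^{4n-1}\pi$ lives over the CM field $E$. Consequently the derivative construction only produces a class $c_{\overrightarrow{u}}\in H^1(G_{E,S_E},V_{2m})$, whereas the arithmetic $\mathcal{L}$-invariant $\mathcal{L}(V_{2m},\{D_v\})$ is defined via $\Ind_F^{\QQ}V_{2m}$ and requires a class in $H^1(G_{F,S},V_{2m})$ landing in $H^1(\{D_v\},V_{2m})$. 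This is exactly the point at which the unitary argument diverges from the symplectic one, and the paper singles it out as the reason one ``cannot simply appeal to Proposition~\ref{p:l-invariant formula}.'' The fix is to invoke inflation--restriction: since $V_{2m}$ is a $\QQ_p$-vector space and $E/F$ is finite, $H^1(G_{F,S},V_{2m})\cong H^1(G_{E,S_E},V_{2m})^{\Gal(E/F)}$, and one checks that $c_{\overrightarrow{u}}$ is fixed by complex conjugation (which acts trivially on $V_{2m}$), so it descends. Because $p$ splits completely in both $E$ and $F$, the local components at places above $p$ are unchanged by this descent, and Proposition~\ref{p:l-invariant formula} then applies verbatim. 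Your ``secondary difficulty'' about the transfer to the definite inner form is a separate (and earlier) issue; the descent of the cohomology class is the step your proposal is missing.
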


\begin{remark}
The assumption that $\pi$ satisfy the hypotheses of
Theorem \ref{t:automorphy of sym} (2), i.e.\ that various
$\Sym^{k}\pi$ be automorphic over $F$ is necessary for our computations. Ongoing work of Clozel and Thorne provides the automorphy of such symmetric powers when $k$ is small.
\end{remark}

The paper is organized as follows. In Section \ref{sect:1}, we
describe Benois' definition of the arithmetic
$\mathcal{L}$-invariant and triangulations in $p$-adic families.
In Section \ref{sect:hmf}, we study Galois representations
attached to Hilbert modular forms and functorial transfers to
unitary and symplectic groups. Then, in Section \ref{sect:eigs}, we describe the unitary and symplectic eigenvarieties and global triangulations of certain analytic Galois representations. In section \ref{sect:l}, we prove Theorems \ref{thmA} through \ref{thmD}. Finally, the appendix discusses some plethysm for $\GL(2)$, relating the $B_{n,k,i}$ to inverse Clebsch--Gordon coefficients and proving an explicit formula for them.

%%%%%%%%%%%%%%%%%%%%%%%%%%%%%%%%%%%%%%%%%%%

\section*{Some basic notation}\label{sect:notation}

Throughout this article, $p$ denotes a fixed odd prime. We will use $L$ to denote a finite extension of $\Qp$. By a $p$-adic representation of a group $G$, we mean a continuous homomorphism $\rho:G\rightarrow\GL(V)$, where $V$ is a finite-dimensional vector space over $L$. Let $\mu_{p^\infty}$ denote the set of $p$-power roots of unity and let $\chi$ denote the $p$-adic cyclotomic character giving the action of whatever appropriate Galois group on it. Let $\Gamma$ denote the Galois group $\Gal(\Qp(\mu_{p^\infty})/\Qp)\cong\Zp^\times$ and let $\gamma$ denote a fixed topological generator. If $F$ is a field, $G_F$ denotes its absolute Galois group and $H^\bullet(F,-)=H^\bullet(G_F,-)$ denotes it Galois cohomology.

%%%%%%%%%%%%%%%%%%%%%%%%%%%%%%%%%%%%%%%%%%%

\section{\texorpdfstring{$(\varphi,\Gamma)$}{(phi, Gamma)}-modules and $\mathcal{L}$-invariants}\label{sect:1}
%\section{$(\varphi, \Gamma)$-modules and $\mathcal{L}$-invariants}\label{sect:1}

This section contains a review of some pertinent content from \cite{benois:L-invariant} and \cite{liu:triangulation}. We refer the reader to these articles for further details.

\subsection{\texorpdfstring{$(\varphi,\Gamma)$}{(phi, Gamma)}-modules over the Robba ring}
%\subsection{$(\varphi,\Gamma)$-modules over the Robba ring}

For a real number $r$ with $0\leq r<1$ denote by $\mathcal{R}^r$ the set of power
series $f(x)=\sum_{k\in \ZZ}a_kx^k$ holomorphic for $r\leq |x|_p<1$ with $a_k\in\Qp$. Let $\mathcal{R}=\bigcup_{r<1}\mathcal{R}^r$ be the \textit{Robba ring} (over $\Qp$).

The Robba ring carries natural actions of a Frobenius, $\varphi$, and
$\Gamma$ given as follows. If $f\in \mathcal{R}$, then
\[(\varphi f)(x)=f((1+x)^p-1)\]
and for $\tau \in \Gamma$,
\[(\tau f)(x)=f((1+x)^{\chi(\tau)}-1).
\]

More generally, if $S$ is an affinoid algebra over $\Qp$, define $\mathcal{R}_{S}=\mathcal{R}\widehat{\otimes}_{\Qp} S$ (cf.~\cite[Proposition 2.1.5]{liu:triangulation}). Extending the actions of $\varphi\otimes 1$ and $\tau\otimes1$ linearly, we get actions of $(\varphi,\Gamma)$ on $\mathcal{R}_S$; i.e.\ this should be thought of as the Robba ring over $\Qp$ with coefficients in $S$.

A $(\varphi, \Gamma)$-module over $\mathcal{R}_L$ is a free
$\mathcal{R}_L$-module $D_L$ of finite rank, equipped with
a $\varphi$-semilinear Frobenius map $\varphi_{D_L}$ and a semilinear action of $\Gamma$ which commute with
each other, such that the induced map
$\varphi_{D_L}^*D_L=D_L\otimes_{\varphi}\mathcal{R}_L\to D_L$ is an
isomorphism. More generally, a $(\varphi, \Gamma)$-module over
$\mathcal{R}_S$ is a vector bundle $D_S$ (coherent,
locally free sheaf) over $\mathcal{R}_S$ of finite rank,
equipped with a semilinear Frobenius, $\varphi_{D_S}$, and a semilinear action of $\Gamma$,
commuting with each other, such that $\varphi_{D_S}^\ast D_S\to
D_S$ is an isomorphism.

If $\delta:\Qp^\times\to S^\times$ is a continuous
character, define $\mathcal{R}_S(\delta)$ as the rank 1
$(\varphi,\Gamma)$-module $\mathcal{R}_S e_\delta$ with basis
$e_\delta$ with $\varphi_{\mathcal{R}_S(\delta)}(x
e_\delta)=\varphi_{\mathcal{R}_S}(x) \delta(p)e_\delta$ and for
$\tau \in \Gamma$, $\tau(x e_\delta) =
\tau(x)\delta(\chi(\tau))e_\delta$.

A $(\varphi,\Gamma)$-module $D_S$ is said to be \textit{trianguline} if
there exists an increasing (separated, exhaustive) filtration
 $\Fil_\bullet D_S$ such that the graded pieces are of the form
 $\mathcal{R}_S(\delta)\otimes_SM$ for some continuous
 $\delta:\Qp^\times\to S^\times$ and locally free one-dimensional
 $M$ over $S$ with trivial $(\varphi,\Gamma)$ actions.

 There is a functor $D_{\rig,L}^\dagger$ associating to an
 $L$-linear continuous representation of $G_{\QQ_p}$ a
 $(\varphi,\Gamma)$-module over $\mathcal{R}_{L}$ and more
 generally a functor $D_{\rig,S}^\dagger$ associating to an
 $S$-linear continuous $G_{\QQ_p}$-representation a
 $(\varphi, \Gamma)$-module over $\mathcal{R}_S$. The functor
 $D_{\rig,L}^\dagger$ induces an isomorphism of categories between
 the category of $L$-linear continuous representations of
 $G_{\QQ_p}$ and slope 0 $(\varphi,\Gamma)$-modules over
 $\mathcal{R}_L$.

There exist functors $\mathcal{D}_{\cris}$ (resp.\ $\mathcal{D}_{\st}$) attaching to a $(\varphi,\Gamma)$-module $D$ over $\mathcal{R}_L$ a filtered $\varphi$-module (resp.\ $(\varphi, N)$\mbox{-module)} over $\Qp$ with coefficients in $L$ such that if $V$ is crystalline (resp.\ semistable) then $\mathcal{D}_{\cris}(D_{\rig,L}^\dagger(V))\cong D_{\cris}(V)$ (resp.\ $\mathcal{D}_{\st}(D_{\rig,L}^\dagger(V))\cong D_{\st}(V)$).

Suppose $V$ is a finite-dimensional $L$-linear continuous
representation of $G_{\QQ_p}$ and
$D=D_{\rig,L}^\dagger(V)$ is the associated
$(\varphi,\Gamma)$-module over $\mathcal{R}_L$. It is a theorem
of Ruochuan Liu (\cite{liu:herr}) that the Galois cohomology $H^\bullet(\QQ_p, V)$
can be computed as the cohomology $H^\bullet(D)$ of the Herr complex
 $0\to D\stackrel{f}{\to} D\oplus D\stackrel{g}{\to} D\to 0$
 where the transition maps are $f(x) = (\varphi_D-1)x\oplus
 (\gamma-1)x$ and $g(x,y) = (\gamma-1)x-(\varphi_D-1)y$. We denote by $\cl(x,y)$ the image of $x\oplus y\in D\oplus D$ in $H^1(D)$.

The Bloch--Kato local conditions $H_f^1(\QQ_p, V)=\ker \left(H^1(\QQ_p, V)\to H^1(\QQ_p, V\otimes_{\Qp} B_{\cris})\right)$ and $H^1_{\st}(\QQ_p,V)$ (analogously defined) can also be computed directly using Herr's complex.  
Indeed, if $\alpha=a\oplus b\in D\oplus D$ one gets an extension $D_\alpha=D\oplus \mathcal{R}_L e$, depending only on $cl(a,b)$, endowed with Frobenius and $\Gamma$-action defined by $(\varphi_D-1)(0\oplus e) = a\oplus 0$ and $(\gamma-1)(0\oplus e) = b\oplus 0$. Let $H_f^1(D)$ be the set of crystalline extensions $D_\alpha$, i.e.\ those satisfying $\dim_{\QQ_p}\mathcal{D}_{\cris}(D_\alpha)=\dim_{\QQ_p}\mathcal{D}_{\cris}(D)+1$ and $H_{\st}^1(D)$ be the set of semistable extensions (defined analogously). Then 
\begin{align*}
H^1_f(\QQ_p,V)\cong H^1_f(\Drig(V))\quad\textrm{and}\quad H^1_{\st}(\QQ_p,V)\cong H^1_{\st}(\Drig(V)).
\end{align*}

We end with the following computation of Benois
(\cite[Proposition 1.5.9]{benois:L-invariant}). Let
$\delta:\QQ_p^\times\to L^\times$ be the character
$\delta(x)=x^{-k}$ where $k\in \ZZ_{\geq 0}$. The rank 1 module $\mathcal{R}_L(\delta)$ is crystalline and so $\mathcal{D}_{\cris}(\mathcal{R}_L(\delta))=\mathcal{R}_L(\delta)^{\Gamma}\subset \mathcal{R}_L(\delta)$. This allows us to define the map
\[i:\mathcal{D}_{\cris}(\mathcal{R}_L(\delta))\oplus
\mathcal{D}_{\cris}(\mathcal{R}_L(\delta))\to
H^1(\mathcal{R}_L(\delta))\]
by $i(x,y)=\cl(-x, y\log\chi(\gamma))$. Then $i$ is an isomorphism and $H^1_f(\mathcal{R}_L(\delta))\cong i(\mathcal{D}_{\cris}(\mathcal{R}_L(\delta))\oplus 0)$. Moreover, defining $H^1_c(\mathcal{R}_L(\delta))=i(0\oplus \mathcal{D}_{\cris}(\mathcal{R}_L(\delta)))$, both $H^1_f(\mathcal{R}_L(\delta))$ and $H^1_c(\mathcal{R}_L(\delta))$ have rank 1.

%%%%%%%%%%%%%%%%%%%%%%%%%%%%%%%%%%%%

\subsection{Regular submodules and the Greenberg--Benois $\mathcal{L}$-invariant}\label{sect:benois-l-inv}\label{sect:regular submodules general}

(cf.\ \cite[\S\S2.1--2.2]{benois:L-invariant}) Let $\rho:G_\QQ\rightarrow\GL(V)$ be a ``geometric'' $p$-adic representation of $G_\QQ$, i.e.\ $\rho$ is unramified outside a finite set of places and it is potentially semistable at $p$. Let $S$ be a finite set of places containing the ramified ones as well as $p$ and $\infty$. We give an overview of Benois' definition of the arithmetic $\mathcal{L}$-invariant of $V$. His definition requires five additional assumptions (C1--5) on $V$ described below. We also discuss $\mathcal{L}$-invariants of representations of $G_F$, where $F$ is a number field. We end by proving a lemma we later use in our computations of $\mathcal{L}$-invariants.

For $\ell\nmid p\infty$, define $H^1_f(\QQ_\ell,V)=\ker(H^1(\QQ_\ell,V)\rightarrow H^1(I_\ell,V))$, where $I_\ell$ denotes the inertia subgroup of $G_{\QQ_\ell}$. When $\ell=p$, the Bloch--Kato local condition $H^1_f(\QQ_p,V)$ was defined in the previous section. Finally, let $H^1_f(\RR,V)=H^1(\RR,V)$.
Let $G_{\QQ,S}=\Gal(\QQ_S/\QQ)$, where $\QQ_S$ is the maximal extension of $\QQ$ unramified outside of $S$. Define the Bloch--Kato Selmer group of $V$ as
\[ H^1_f(V)=\ker\left(H^1(G_{\QQ,S}, V)\to \bigoplus_{v\in
  S}H^1(\QQ_v,V)/H^1_f(\QQ_v, V)\right),
\]
which does not depend on the choice of $S$.

%A theorem of Wiles (\cite[Theorem 2.18]{ddt:fermat}) gives
%\[\dim_{\QQ_p}H^1_f(V)-\dim_{\QQ_p}H^1_f(V^*(1))+\dim_{\QQ_p}H^0(G_{\QQ,S},V)-\dim_{\QQ_p}H^0(G_{\QQ,S},V^*(1))=\sum_{v}\dim_{\QQ_p}\frac{H^1_f(\QQ_v,V)}{H^0(\QQ_v,V)}\]
%When $\ell\nmid p\infty$ then
%$\dim_{\QQ_p}H^1_f(\QQ_\ell,V)=\dim_{\QQ_p}V/(\Frob_\ell-1)V=\dim_{\QQ_p}V^{\Frob_\ell}=\dim_{\QQ_p}H^0(\QQ_\ell,V)$; at $p$ \cite[Corollary 3.8.4]{Bloch--Kato:tamagawa}
%implies that
%$\dim_{\QQ_p}H^1_f(\QQ_\ell,V)-\dim_{\QQ_p}H^0(\QQ_\ell,V)=\dim_{\QQ_p}\D_{\dR}(V|_{G_{\QQ_p}})/\Fil^0\D_{\dR}(V|_{G_{\QQ_p}})$;
%finally, 
%$\dim_{\QQ_p}H^1_f(\RR ,V)-\dim_{\QQ_p}H^0(\RR ,V)=-\dim_{\QQ_p}H^0(\RR ,V)$.

As in \cite[\S 2.1.2]{benois:L-invariant} we will assume:
\begin{enumerate}
\item[(C1)]\label{assumption:C1}$H_f^1(V)=H^1_f(V^*(1))=0$;
\item[(C2)]\label{assumption:C2}$H^0(G_{\QQ ,S},V)=H^0(G_{\QQ ,S},V^*(1))=0$;
\item[(C3)]\label{assumption:C3}$V|_{G_{\QQ_p}}$ is semistable and the
semistable Frobenius $\varphi$ is semisimple at $1$ and
$p^{-1}$;
\item[(C4)]\label{assumption:C4}$\Drig(V|_{G_{\QQ_p}})$ has no saturated subquotient isomorphic to some $U_{k,m}$ for $k\geq 1$ and $m\geq 0$ (cf. \cite[\S 2.1.2]{benois:L-invariant}, where $U_{k,m}$ is the (unique) non-split crystalline $(\varphi,\Gamma)$-module extension of $\mathcal{R}(x^{-m})$ by $\mathcal{R}(|x|x^k)$).
\end{enumerate}

A \textit{regular submodule} $D$ of $\D_{\st}(V)$ is a
$(\varphi,N)$-submodule such that $D\cong
\D_{\st}(V)/\Fil^0\D_{\st}(V)$ (as vector spaces) under the natural projection
map. Given a regular submodule $D$, Benois constructs the filtration
\begin{align*}
D_{-1} &= (1-p^{-1}\varphi^{-1})D+N(D^{\varphi=1})\\
D_0&=D\\
D_1&=D+\D_{\st}(V)^{\varphi=1}\cap N^{-1}(D^{\varphi=p^{-1}})
\end{align*}

The filtration $D_\bullet$ on $\D_{\st}(V)$ gives a
filtration $F_\bullet\Drig(V)$ by setting
\[F_i\Drig(V)=\Drig(V)\cap (D_i\otimes_{\Qp}
\mathcal{R}_L[1/t])\]
(here $t=\log(1+x)\in \mathcal{R}_L$).

Define the \textit{exceptional subquotient} of $V$ to be $W=F_1\Drig(V)/F_{-1}\Drig(V)$, which is a $(\varphi,\Gamma)$ analogue of Greenberg's $F^{00}/F^{11}$ (see \cite[p.~157]{G94}). Benois shows there are unique decompositions
\begin{align*}
W&\cong W_0\oplus W_1\oplus M\\
\gr_0\Drig(V)&\cong W_0\oplus M_0\\
\gr_1\Drig(V)&\cong W_1\oplus M_1
\end{align*}
such that $W_0$ has rank $\dim H^0(W^*(1))$, $W_1$ has rank $\dim H^0(W)$. Moreover, $M_0$ and $M_1$ have equal rank and the sequence $0\to M_0\stackrel{f}{\to}M\stackrel{g}{\to}M_1\to 0$ is exact.

One has
\[H^1(W)=\coker(H^1(F_{-1}\Drig(V))\rightarrow H^1(F_1\Drig(V))),
\]
\[H^1_f(W)=\coker(H^1_f(F_{-1}\Drig(V))\rightarrow H^1_f(F_1\Drig(V))),
\]
and $H^1(W)/H^1_f(W)$ has dimension $e_D=\rk M_0+\rk W_0+\rk W_1$.

The (dual of the) Tate-Poitou exact sequence
gives an exact sequence
\[0\to H^1_f(V)\to H^1(G_{\QQ ,S},V)\to \displaystyle \bigoplus_{v\in
  S}\frac{H^1(\QQ _v,V)}{H^1_f(\QQ _v,V)}\to H^1_f(V^*(1))^\vee\]
where $V^*=\Hom(V, \QQ_p)$ and $A^\vee=\Hom(A,
\QQ/\ZZ)$. Assumptions (C1) and (C2) above imply that
\[H^1(G_{\QQ ,S},V)\cong \displaystyle \bigoplus_{v\in
  S}\frac{H^1(\QQ _v,V)}{H^1_f(\QQ _v,V)}\]
Note that $\displaystyle \bigoplus_{v\in
  S}\frac{H^1(\QQ _v,V)}{H^1_f(\QQ _v,V)}$ contains the $e_D$-dimensional subspace $\displaystyle \frac{H^1(W)}{H^1_f(W)}\cong \frac{H^1(F_1\Drig(V_p))}{H^1_f(\QQ_p,V_p)}$. Define
  $H^1(D,V)\subset H^1(G_{\QQ ,S},V)$ to be the set of classes whose image in
$\displaystyle \frac{H^1(\QQ_p,V)}{H^1_f(\QQ_p,V)}$ lies in $\displaystyle \frac{H^1(W)}{H^1_f(W)}$.

From now on, assume
\begin{itemize}
\item[(C5)] $W_{0}=0$.
\end{itemize}

Since $\varphi$ acts as 1 on $\gr_1\Drig(V)$, \cite[Proposition 1.5.9]{benois:L-invariant} implies, assuming that the Hodge--Tate weights are nonnegative, that $\gr_1\Drig(V)\cong \oplus \mathcal{R}_L(x^{-k_i})$ where the $k_i\geq 0$ are the Hodge--Tate weights. Thus one obtains a decomposition $H^1(\Drig(V))\cong H^1_f(\Drig(V))\oplus H^1_c(\Drig(V))$ by summing the decompositions for each $\mathcal{R}_L(x^{-k_i})$; as in the rank 1 case, $H^1_f(\gr_1\Drig(V))\cong H^1_c(\gr_1\Drig(V))\cong \mathcal{D}_{\cris}(\gr_1\Drig(V))$.  There exist linear maps $\rho_{D,?}:H^1(D,V)\to\mathcal{D}_{\cris}(\gr_1\Drig(V))$ for $?\in \{f,c\}$ making the following diagram commute:
\[\xymatrix{
  \mathcal{D}_{\cris}(\gr_1\Drig(V))\ar[r]^\cong_{\iota_f}&H^1_f(\gr_1\Drig(V))\\
  H^1(D,V)\ar[r]\ar[u]^{\rho_{D,f}}\ar[d]_{\rho_{D,c}}&H^1(\gr_1\Drig(V))\ar[u]_{\pi_f}\ar[d]^{\pi_c}\\
  \mathcal{D}_{\cris}(\gr_1\Drig(V))\ar[r]^\cong_{\iota_c}&H^1_c(\gr_1\Drig(V))
}\]
Under the assumption that $W_0=0$, Benois shows that the linear map $\rho_{D,c}$ is invertible and defines the arithmetic $\mathcal{L}$-invariant as
\[\mathcal{L}(V,D):={\det}\left(\rho_{D,f}\circ\rho_{D,c}^{-1}\big|\mathcal{D}_{\cris}(\gr_1\Drig(V))\right).\]

In the case of a number field $F$ and a $p$-adic
representation $\rho:G_F\to \GL(V)$ we
follow Hida in defining an arithmetic
$\mathcal{L}$-invariant. Suppose that $p$ splits completely in
$F$, that $\rho$ is unramified almost everywhere and that at
all $v\mid p$, $\rho|_{G_{F_v}}$ is semistable. Assume that $\rho$ satisfies conditions (C1--4) (with appropriate modifications). Then the representation $\Ind_F^{\QQ}\rho$
satisfies conditions (C1--4). Indeed, conditions (C3)
and (C4) follow from the fact that
\[(\Ind_F^{\QQ}\rho)|_{G_{\QQ_p}}\cong
\bigoplus_{v\mid p}\Ind_{F_v}^{\QQ_p}(\rho|_{G_{F_v}})\cong
\bigoplus_{v\mid p}\rho|_{G_{F_v}}\]
since $p$ splits completely in
$F$. Conditions (C1) and (C2) follow from Shapiro's lemma.

For $v\mid p$, let $V_v:=V|_{G_{F_v}}$. Choose $D_v\subset
\D_{\st}(V_v)$ a regular submodule giving the modules $W_{0,v}$,
$W_{1,v}$ and $M_v$. Then $D=\oplus_{v\mid p}D_v\subset
\oplus_{v\mid p}\D_{\st}(\rho_v)\cong
\D_{\st}((\Ind_F^{\QQ}\rho)|_{G_{\QQ_p}})$ is a
regular submodule and $W_0=\oplus_{v\mid p}W_{0,v}$,
$W_1=\oplus_{v\mid p}W_{1,v}$ and $M=\oplus_{v\mid
  p}M_v$. Assuming $W_{0,v}=0$ for every $v\mid p$ yields
$W_0=0$ and we may define
\[\mathcal{L}(\{D_v\}, \rho)=\mathcal{L}(D, \Ind_F^{\QQ}\rho)\]
Note that $\gr_1\Drig((\Ind_F^{\QQ}\rho)|_{G_{\QQ_p}})\cong \bigoplus_{v\mid p}\gr_1\Drig(V_v)$.

\begin{lemma}\label{l:rank 1 L invariant}
Suppose that $\gr_1\Drig(V_v)\cong \mathcal{R}$ for each $v\mid
p$. Let $H^1(\mathcal{R})\cong H^1_f(\mathcal{R})\oplus
H^1_c(\mathcal{R})$ with basis $x=(-1,0)$ and $y=(0,\log_p
\chi(\gamma))$. Suppose $c\in
H^1(D, \Ind_F^{\QQ}\rho)$ is such that the image of $c$
in $H^1(\gr_1\Drig(V_v))$ is $\xi_v=a_vx+b_vy$ with $b_v\neq
0$. Then
\[\mathcal{L}(\{D_v\},\rho)=\prod_{v\mid p}\frac{a_v}{b_v}\]
\end{lemma}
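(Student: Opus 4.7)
The plan is to exploit the direct sum decomposition $(\Ind_F^{\QQ}\rho)|_{G_{\QQ_p}}\cong\bigoplus_{v\mid p}V_v$, valid because $p$ splits completely in $F$, so that the global $\mathcal{L}$-invariant factors as a product of rank one local computations, each an instance of the Benois computation recalled at the end of the previous subsection.

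First I will unwind the commutative diagram defining $\rho_{D,f}$ and $\rho_{D,c}$ summand-by-summand. Under the hypothesis $\gr_1\Drig(V_v)\cong\mathcal{R}$, the target $\mathcal{D}_{\cris}(\gr_1\Drig((\Ind_F^{\QQ}\rho)|_{G_{\QQ_p}}))=\bigoplus_{v\mid p}L\cdot 1_v$, where $1_v$ is the canonical basis of $\mathcal{D}_{\cris}(\gr_1\Drig(V_v))\cong L$. Applying the rank one computation to each summand with $\delta$ trivial identifies $\iota_f(1_v)=x_v$ and $\iota_c(1_v)=y_v$, while $\pi_f$ and $\pi_c$ are the natural projections to the $x$- and $y$-parts. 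Tracing $c$ through the diagram, the image $\sum_v(a_vx_v+b_vy_v)$ yields $\rho_{D,f}(c)=\sum_v a_v\cdot 1_v$ and $\rho_{D,c}(c)=\sum_v b_v\cdot 1_v$.

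Next, I will argue that the splittings $D=\bigoplus D_v$, $W=\bigoplus W_v$, and $(\Ind_F^{\QQ}\rho)|_{G_{\QQ_p}}=\bigoplus V_v$ induce a compatible direct sum decomposition $H^1(D,\Ind_F^{\QQ}\rho)=\bigoplus_{v\mid p}H^1(D_v,V_v)$ for which the map into $H^1(\gr_1\Drig((\Ind_F^{\QQ}\rho)|_{G_{\QQ_p}}))$ is block-diagonal. Then $c$ decomposes uniquely as $c=\sum_v c_v$ with $c_v$ mapping to $a_v x_v+b_v y_v$; since $b_v\neq 0$, each $c_v$ is nonzero, so $\{c_v\}$ is a basis. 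In this basis and the basis $\{1_v\}$ of the target, both $\rho_{D,f}$ and $\rho_{D,c}$ become diagonal with entries $a_v$ and $b_v$, giving
\[\mathcal{L}(\{D_v\},\rho)=\det(\rho_{D,f}\circ\rho_{D,c}^{-1})=\prod_{v\mid p}\frac{a_v}{b_v}.\]

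The main obstacle is justifying the direct sum decomposition of $H^1(D,\Ind_F^{\QQ}\rho)$ and its compatibility with the map into $H^1(\gr_1\Drig((\Ind_F^{\QQ}\rho)|_{G_{\QQ_p}}))$. The key ingredients are condition (C1), which via Tate--Poitou identifies $H^1(G_{\QQ,S},\Ind_F^{\QQ}\rho)$ with a direct sum of local quotients at $v\in S$; the fact that the $p$-local condition $H^1(W)/H^1_f(W)$ itself decomposes as $\bigoplus_v H^1(W_v)/H^1_f(W_v)$ since $D=\bigoplus D_v$; and Benois' theorem (under (C5)) that $\rho_{D,c}$ is an isomorphism onto $\mathcal{D}_{\cris}(\gr_1\Drig((\Ind_F^{\QQ}\rho)|_{G_{\QQ_p}}))$, which pins down the dimension and ensures that any off-block contributions from places $\ell\neq p$ in $S$ are absent from the computation.
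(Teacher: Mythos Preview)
Your proposal is correct and follows essentially the same approach as the paper: exploit the splitting $(\Ind_F^{\QQ}\rho)|_{G_{\QQ_p}}\cong\bigoplus_{v\mid p}V_v$ to decompose $H^1(\gr_1\Drig)$, $\mathcal{D}_{\cris}(\gr_1\Drig)$, and the maps $\iota_{f/c},\pi_{f/c}$ as direct sums over $v\mid p$, then read off each local factor as $a_v/b_v$ from the image $\xi_v$ of the single class $c$.

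The paper's proof is organized slightly differently and is terser. It does not attempt to decompose the global Selmer group $H^1(D,\Ind_F^{\QQ}\rho)$ or produce a basis $\{c_v\}$ as you do; instead it simply observes that $\iota_f,\iota_c,\pi_f,\pi_c$ are direct sums of the local maps $\iota_{f,v},\iota_{c,v},\pi_{f,v},\pi_{c,v}$, declares that therefore $\rho_f$ and $\rho_c$ are direct sums of $\rho_{f,v}=\iota_{f,v}^{-1}\circ\pi_{f,v}$ and $\rho_{c,v}=\iota_{c,v}^{-1}\circ\pi_{c,v}$, and concludes $\det(\rho_f\circ\rho_c^{-1})=\prod_v\det(\rho_{f,v}\circ\rho_{c,v}^{-1})=\prod_v a_v/b_v$. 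Your attempt to justify a splitting of $H^1(D,\Ind_F^{\QQ}\rho)$ goes beyond what the paper writes down; note that the symbol $H^1(D_v,V_v)$ you introduce is not defined in the paper (there is no purely local analogue of $H^1(D,V)$ in this setup), so if you keep that step you should say precisely what subspace you mean.
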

\begin{proof}
Since
\[H^1(\gr_1\Drig((\Ind_F^{\QQ}\rho)|_{G_{\QQ_p}}))\cong
\bigoplus_{v\mid p}H^1(\gr_1\Drig(\rho_v))
\]
and
\[
\mathcal{D}_{\cris}(\gr_1\Drig((\Ind_F^{\QQ}\rho)|_{G_{\QQ_p}}))\cong
\bigoplus_{v\mid p}\mathcal{D}_{\cris}(\gr_1\Drig(\rho_v)),
\]
the maps $\iota_f$, $\iota_c$, $\pi_f$, and $\pi_c$ are direct sums
of the maps
\[
\begin{array}{c}
\iota_{f,v}:\mathcal{D}_{\cris}(\mathcal{R})\cong
H^1_{f}(\mathcal{R}),\quad
\iota_{c,v}:\mathcal{D}_{\cris}(\mathcal{R})\cong
H^1_{c}(\mathcal{R}),\\
\\
\pi_{f,v}:H^1(\mathcal{R})\to H^1_f(\mathcal{R})\quad\text{and}\quad
\pi_{c,v}:H^1(\mathcal{R})\to H^1_c(\mathcal{R}).
\end{array}
\]
Let
$\xi_v=a_vx+b_vy$ be the image of $c$ in
$H^1(\gr_1\Drig(\rho_v))\cong H^1(\mathcal{R})$. We deduce that
the maps $\rho_f$ (resp.\ $\rho_c$) are direct sums of the $\rho_{f,v}=\iota_{f,v}^{-1}\circ\pi_{f,v}$ (resp.\ $\rho_{c,v}=\iota_{c,v}^{-1}\circ\pi_{c,v}$). Then $a_v\in
\mathcal{D}_{\cris}(\gr_1\Drig(\rho_v))$ is the image of $\xi_v$
under $\rho_f$ and $b_v\in
\mathcal{D}_{\cris}(\gr_1\Drig(\rho_v))$ is the image of $\xi_v$
under $\rho_c$. We deduce that
\begin{align*}
\mathcal{L}(\{D_v\}, V)&=\det(\rho_f\circ\rho_c^{-1}|\mathcal{D}_{\cris}(\gr_1\Drig((\Ind_F^{\QQ}\rho)|_{G_{\QQ_p}})))\\
&=\prod_{v\mid p}\det(\rho_{f,v}\circ\rho_{c,v}^{-1}|\mathcal{D}_{\cris}(\gr_1\Drig(\rho_v)))\\
&=\prod_{v\mid p}\frac{a_v}{b_v}
\end{align*}

\end{proof}

\subsection{Refined families of
  Galois representations}\label{sect:refined galois}
Lemma \ref{l:rank 1 L invariant} provides a framework for computing arithmetic $\mathcal{L}$-invariants as long as one is able to produce cohomology classes $c\in H^1(D, \Ind_F^{\QQ}\rho)$ and compute their projections to $H^1(\gr_1\Drig(\rho_v))$. We will produce such cohomology classes using analytic Galois representations on eigenvarieties and we will compute explicitly the projections (in effect the $a_v$ and $b_v$ of Lemma \ref{l:rank 1 L invariant}) using global triangulations of $(\varphi, \Gamma)$-modules.

We recall here the main result of \cite[\S 5.2]{liu:triangulation} on triangularization in
refined families. Let $L$ be a finite extension of $\QQ_p$. Suppose $X$ is a separated and reduced rigid analytic
space over $L$ and $V_X$ is a locally free, coherent
$\mathcal{O}_X$-module of rank $d$ with a continuous $\mathcal{O}_X$-linear
action of the Galois group $G_{\QQ_p}$. The family $V_X$
of Galois representations is said to be \textit{refined} if there exist $\kappa_1,\ldots,\kappa_d\in \mathcal{O}(X)$, $F_1,\ldots,F_d\in \mathcal{O}(X)$, and a Zariski dense set of points $Z\subset X$ such that:
\begin{enumerate}
\item for $x\in X$, the Hodge--Tate weights of $\D_{\dR}(V_{x})$ are
$\kappa_1(x),\ldots,\kappa_d(x)$,
\item for $z\in Z$, the representation $V_z$ is crystalline,
\item for $z\in Z$, $\kappa_1(z)<\ldots <\kappa_d(z)$,
\item the eigenvalues of $\varphi$ acting on $\D_{\cris}(V_z)$
are distinct and equal to $\{p^{\kappa_i(z)}F_i(z)\}$,
\item \label{cond:5}for $C\in\ZZ_{\geq0}$ the set $Z_C$, consisting of $z\in Z$ such that
for all distinct subsets $I,J\subset \{1,\ldots,d\}$ of equal
cardinality one has $|\sum_{i\in I}\kappa_i(z)-\sum_{j\in
  J}\kappa_j(z)|>C$, accumulates at every $z\in Z$,
\item for each $1\leq i\leq d$, there exists a continuous character $\chi_i:\mathcal{O}_K^\times\to \mathcal{O}(X)^\times$ such that $\chi_{z,i}'(1)=\kappa_i$ and $\chi_{z,i}(u)=u^{\kappa_i(z)}$ for all $z\in Z$.
\end{enumerate}
Given a refined family $V_X$, we define
$\Delta_i:\Qp^\times\to \mathcal{O}(X)^\times$ by
$\Delta_i(p)=\prod_{j=1}^i F_j$ and for $u\in
\Zp^\times$, $\Delta_i(u)=\prod_{j=1}^i\chi_j(u)$. Let
$\delta_i=\Delta_i/\Delta_{i-1}$.

Let $V_X$ be a refined family. For all $z\in Z$, there is an induced \textit{refinement} of $V_z$, i.e.\ a filtration $0=\mathcal{F}_0\subsetneq \mathcal{F}_1\subsetneq\ldots\subsetneq \mathcal{F}_d=\D_{\cris}(V_z)$ of $\varphi$-submodules. It is determined by the condition that the eigenvalue of $\varphi_{\cris}$ on $\mathcal{F}_i/\mathcal{F}_{i-1}$ is $p^{\kappa_i(z)}F_i(z)$. We say that $z$ is \textit{noncritical} if $\D_{\cris}(V_z)=\mathcal{F}_i+\Fil^{\kappa_{i+1}(z)}\D_{\cris}(V_z)$. We say that $z$ is \textit{regular} if $\det\varphi$ on $\mathcal{F}_i$ has multiplicity one in $\D_{\cris}(\wedge^i V_z)$ for all $i$.

Then, \cite[Theorem 5.2.10]{liu:triangulation} gives:
\begin{theorem}\label{t:triangulation}
If $z\in Z$ is regular and noncritical, then in an affinoid neighborhood $U$ of $z$, $V_{U}$ is trianguline with graded pieces isomorphic to $\mathcal{R}_{U}(\delta_1),\ldots,\mathcal{R}_{U}(\delta_d)$.
\end{theorem}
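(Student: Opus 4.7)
The plan is to proceed by induction on the rank $d$; the base case $d=1$ is immediate since a rank-one refined family is automatically trianguline with the prescribed graded piece, so assume $d \geq 2$. The inductive step reduces to producing, after possibly shrinking $U$ around $z$, a saturated sub-$(\varphi,\Gamma)$-module $\mathcal{F}_1 \subset \Drig(V_U)$ isomorphic to $\mathcal{R}_U(\delta_1)$. Once $\mathcal{F}_1$ is in hand, the quotient $\Drig(V_U)/\mathcal{F}_1$ is a locally free $(\varphi,\Gamma)$-module of rank $d-1$ that underlies a refined family on $U$ with data $(\kappa_2,\ldots,\kappa_d)$, $(F_2,\ldots,F_d)$, and $(\chi_2,\ldots,\chi_d)$, at which the point $z$ remains regular and noncritical; the inductive hypothesis then furnishes the rest of the filtration with the graded pieces $\mathcal{R}_U(\delta_2), \ldots, \mathcal{R}_U(\delta_d)$ as required.

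The construction of $\mathcal{F}_1$ is the heart of the argument, and I would approach it by adapting Kisin's technique of analytic continuation of crystalline periods, in the form developed for families of $(\varphi,\Gamma)$-modules by Berger--Colmez and Bellaiche--Chenevier. At each noncritical regular classical point $z' \in Z$, the refinement picks out a one-dimensional $\varphi$-stable subspace of $\D_{\cris}(V_{z'})$ with eigenvalue $p^{\kappa_1(z')} F_1(z')$ sitting inside $\Fil^{\kappa_1(z')}$; via the usual equivalence of categories this yields a saturated rank-one sub-$(\varphi,\Gamma)$-module of $\Drig(V_{z'})$ isomorphic to $\mathcal{R}(\delta_{1,z'})$. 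To interpolate these pointwise data, I would extract an eigenvector $e_U$ for the $(\varphi,\Gamma)$-action inside $\Drig(V_U)[1/t]$ by means of a Riesz-type projector onto the $F_1$-eigenspace: regularity at $z$ ensures that the Frobenius eigenvalues $F_1(z), \ldots, F_d(z)$ are distinct, so after shrinking $U$ one can algebraically isolate the $F_1$-component of any given section, and the accumulation property (5) supplies enough nearby classical specializations to pin down the resulting eigenvector up to a scalar.

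The principal obstacle is to prove saturatedness of the resulting line in $\Drig(V_U)$: a priori the constructed $e_U$ lives only in $\Drig(V_U)[1/t]$, and one must show that after rescaling by a suitable power of $t$ and a unit it generates a direct summand of rank one over $\mathcal{R}_U$. This is precisely where noncriticality at $z$ becomes decisive, as it forces compatibility between the refinement filtration on $\D_{\cris}(V_z)$ and the Hodge filtration, thereby eliminating the $t$-power poles that could otherwise appear in the interpolation. Using accumulation of the sets $Z_C$ one verifies the absence of poles pointwise on a Zariski-dense set of regular noncritical points of $Z$ inside $U$, and flatness then propagates this to all of $U$. Once saturatedness is established, identification of $\mathcal{F}_1$ with $\mathcal{R}_U(\delta_1)$ follows by matching the prescribed $\varphi$-action via $F_1$ and the $\Gamma$-action via $\chi_1$ against the definition of $\delta_1$, and the induction closes.
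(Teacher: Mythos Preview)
The paper does not prove this theorem: it is quoted verbatim as \cite[Theorem 5.2.10]{liu:triangulation}, with no argument given beyond the citation. So there is no ``paper's own proof'' to compare against; the authors are simply invoking Liu's result as a black box.

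That said, your sketch is a reasonable outline of how results of this type are actually established in the literature (Liu, Kedlaya--Pottharst--Xiao, and the earlier Kisin/Bella\"iche--Chenevier template you mention): one does build the triangulation one step at a time, and the key analytic input is indeed the interpolation of the crystalline eigenvector for $F_1$ together with a saturatedness argument that hinges on noncriticality. Two places where your sketch is thin relative to the actual proofs: first, the claim that the quotient $\Drig(V_U)/\mathcal{F}_1$ is again a refined family with $z$ still regular and noncritical is not automatic and requires its own argument (in particular one must check that the quotient is the $(\varphi,\Gamma)$-module of a genuine family of Galois representations, or else work entirely on the $(\varphi,\Gamma)$-module side); second, the passage from ``no $t$-poles on a Zariski-dense subset'' to ``saturated over all of $U$'' is considerably more delicate than a flatness remark---this is where the real work in Liu's paper lies, and it uses finer control on the Newton and Hodge polygons in families rather than a bare density argument. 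As a proof \emph{sketch} your proposal points in the right direction, but for the purposes of this paper the correct move is simply to cite Liu, as the authors do.
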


\begin{remark}
In fact, \cite[Theorem 5.3.1]{liu:triangulation} shows that $V_x$ is trianguline at all $x\in X$, but the triangulation is only made explicit over a proper birational transformation of $X$.
\end{remark}

\section{Symmetric powers of Hilbert modular forms}\label{sect:hmf}
\subsection{Hilbert modular forms and their Galois representations}\label{sect:hmf def}
Let
$F/\QQ$ be a totally real field of degree $d$ and let
$I\subseteq\Hom_{\QQ}(F, \CC)$ be a parametrization of the
infinite places. We fix an embedding $\iota_\infty:\overline{\QQ}\into \CC$ thus identifying $I$ with a subset of $\Hom_{\QQ}(F,\ol{\QQ})$. We also fix $\iota_p:\overline{\QQ}\into \overline{\QQ}_p$ which identifies $\Hom_{\QQ}(F,\ol{\QQ}_p)$ with $\Hom_{\QQ}(F,\ol{\QQ})$. This determines a partition $I=\bigcup_{v\mid p}I_v$. Let $\varpi_v$ be a uniformizer for $F_v$ and $e_v$ be the ramification index of $F_v/\QQ_p$. 

Before defining Hilbert modular forms, we need some notation on
representations of $\GL(2,\RR)$. Recall that the Weil
group of $\RR$ is $W_{\RR}=\CC^\times\rtimes \{1,j\}$ where
$j^2=-1$ and $jz=\overline{z}j$ for $z\in \CC$. For an
integer $n\geq 2$, let $\mathcal{D}_n$ be the essentially
discrete series representation of $\GL(2,\RR)$ whose Langlands parameter is $\phi_n:W_{\RR}\to \GL(2, \CC)$ given by $\phi_n(z) = \begin{pmatrix} (z/\overline{z})^{(n-1)/2}&\\ &(\overline{z}/z)^{(n-1)/2}\end{pmatrix}$ and $\phi_n(j) = \begin{pmatrix} &1\\ (-1)^{n-1}&\end{pmatrix}$. The representation $\mathcal{D}_n$ is unitary and has central character $\sign^n$ and Blattner parameter $n$.
More generally, if $t\in \CC$, the representation
$\mathcal{D}_n\otimes |\det|^t$ has associated Langlands
parameter $\phi_n\otimes |\cdot|^{2t}$. If $w\in \ZZ$,
then
\[H^1(\mathfrak{gl}_2,\SO(2),\mathcal{D}_n(-w/2)\otimes
V_{(w+n-2)/2, (w-n+2)/2}^\vee)\neq 0\]
where $V_{(a,b)} = \Sym^{a-b}\CC^2\otimes \det^b$ is the representation of highest weight $(a,b)$.

By a cohomological Hilbert modular form of infinity type $(k_1, \ldots, k_d, w)$, we mean a cuspidal automorphic representation $\pi$ of $\GL(2, \AA_F)$ such that
\begin{enumerate}
\item $k_i\equiv w\pmod{2}$ with $k_i\geq 2$ and 
\item for every $i\in I$, $\pi_i\cong \mathcal{D}_{k_i}\otimes |\det|^{-w/2}$.
\end{enumerate}
This is equivalent to the fact that $\pi_i$ has Langlands
parameter $z\mapsto |z|^{-w}\begin{pmatrix}
(z/\overline{z})^{(k_i-1)/2}&\\ &
(z/\overline{z})^{-(k_i-1)/2}\end{pmatrix}$,
$j\mapsto \begin{pmatrix} &1\\
(-1)^{k_i-1}&\end{pmatrix}$. Since $H^1(\mathfrak{gl}_2,\SO(2),\pi_i\otimes
V_{(-w+k_i-2)/2, (-w-k_i+2)/2}^\vee)\neq 0$, the representation $\pi$ can realized in the cohomology of the local system $\left(\bigotimes_i\left(\Sym^{k_i-2}\otimes \det^{(-w-k_i+2)/2}\right)\right)^\vee$ over a suitable Hilbert modular variety (cf. \cite[\S 3.1.9]{raghuram-tanabe:hilbert}). When $F=\QQ$ and $w=2-k$ we recover the usual notion of an elliptic modular form of weight $k$.

If $p$ is a prime number, then (by Eichler, Shimura, Deligne, Wiles, Taylor, Blasius--Rogawski) there exists a continuous $p$-adic Galois representation $\rho_{\pi,p}:G_F\to \GL(2, \overline{\QQ}_p)$ such that $L^S(\pi, s-1/2)=L^S(\rho_{\pi,p},s)$ for a finite set $S$ of places of $F$. Moreover, one has local-global compatibility: if $v\in S$, then $\WD(\rho_{\pi,p}|_{G_{F_v}})^{\Frss}\cong\rec(\pi_v\otimes|\cdot|^{-1/2})$. When $v\nmid p$ this follows from the work of Carayol (\cite{carayol:hilbert-ell}) and when $v\mid p$ from the work of Saito (\cite{saito:hilbert-p}) and Skinner (\cite{skinner:hilbert}). Finally, the Hodge--Tate weights of $\rho_{\pi,p}|_{G_{F_v}}$ are $(w-k_v)/2$ and $(w+k_v-2)/2$. (For weight $k$ elliptic modular forms with $w=2-k$ this amounts to Hodge--Tate weights $1-k$ and $0$.)

We end this discussion with the following result on the
irreducibility of symmetric powers.
\begin{lemma}\label{l:hmf lie irreducible}
Suppose $\pi$ is not CM. Then $\rho_{\pi,p}$ and $\Sym^n\rho_{\pi,p}$ are Lie irreducible.
\end{lemma}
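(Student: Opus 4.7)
The plan is to first establish Lie irreducibility of $\rho_{\pi,p}$ itself using the non-CM hypothesis, and then bootstrap this to every symmetric power by analysing the Zariski closure of the image. Recall that a representation $\rho\colon G\to\GL(V)$ is Lie irreducible if and only if the identity component of the Zariski closure of $\rho(G)$ acts irreducibly on $V$; equivalently, $\rho|_H$ is irreducible for every open subgroup $H\le G$.

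\textbf{Step 1 (Lie irreducibility of $\rho_{\pi,p}$).} Cuspidality of $\pi$ gives global irreducibility of $\rho_{\pi,p}$. Suppose for contradiction that some open normal subgroup $H$ of $G_F$ acts reducibly on $\rho_{\pi,p}$. A standard Clifford-theoretic argument (the two isotypic components of $\rho_{\pi,p}|_H$ must be genuinely swapped by $G_F/H$, or else one would descend to a $G_F$-stable line) then produces a quadratic extension $F'/F$ and a character $\chi$ of $G_{F'}$ with $\rho_{\pi,p}\cong\Ind_{G_{F'}}^{G_F}\chi$. Via local-global compatibility and the theory of automorphic induction from $\GL(1)/F'$ to $\GL(2)/F$, this forces $\pi$ to be the automorphic induction of a Hecke character of $F'$, i.e., CM, contradicting the hypothesis.

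\textbf{Step 2 (bootstrap to $\Sym^n$).} Let $G\subset\GL_2(\overline{\QQ}_p)$ denote the Zariski closure of the image of $\rho_{\pi,p}$ and $G^\circ$ its identity component. Step 1 asserts that $G^\circ$ acts irreducibly on the standard representation $V=\overline{\QQ}_p^2$. Every connected solvable algebraic subgroup of $\GL_2$ is contained in a Borel, so $G^\circ$ is non-solvable; but the only non-solvable connected algebraic subgroups of $\GL_2$ contain $\SL_2$, whence $\SL_2\subseteq G^\circ$. Since $\Sym^n V$ is the irreducible representation of $\SL_2$ of highest weight $n$, it is irreducible as a $G^\circ$-module, proving Lie irreducibility of $\Sym^n\rho_{\pi,p}$.

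The main obstacle lives entirely in Step 1: the real content is the classical characterization of non-CM Hilbert modular forms as precisely those whose associated $p$-adic Galois representation is not induced from a character of a quadratic extension. For elliptic modular forms this goes back to Ribet; for a general totally real base $F$ it belongs to the standard package on CM versus non-CM Hilbert modular forms, which I would cite rather than reprove. Step 2 is then a purely group-theoretic consequence using that $\GL_2$ has particularly few connected algebraic subgroups.
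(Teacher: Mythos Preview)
Your overall strategy matches the paper's: first establish Lie irreducibility of $\rho_{\pi,p}$, then deduce that the identity component of the Zariski closure of the image contains $\SL_2$, and conclude for $\Sym^n$. Step~2 is fine and is exactly what the paper does (phrased slightly differently).

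Step~1, however, has a genuine gap. Your Clifford argument presupposes that $\rho_{\pi,p}|_H$ has \emph{two} isotypic components. If instead $\rho_{\pi,p}|_H$ is isotypic, i.e.\ $\chi_1=\chi_2$ so that the restriction is a scalar character times the $2$-dimensional trivial representation, there is nothing for $G_F/H$ to swap and no quadratic extension is produced. This case is exactly the one where the projective image of $\rho_{\pi,p}$ is finite---equivalently, $\rho_{\pi,p}$ is a character twist of an Artin representation---and it is \emph{not} excluded by the non-CM hypothesis (think of the tetrahedral/octahedral/icosahedral projective images in the classical story). You therefore need a separate argument to rule it out. The paper does this by invoking Patrikis's structure theorem, which outputs the trichotomy ``Lie irreducible / induced / Artin up to twist'', uses non-CM to kill the induced case, and then observes that $\rho_{\pi,p}|_{G_{F_v}}$ has two distinct Hodge--Tate weights (they differ by $k_v-1\ge 1$), which is impossible for a character twist of an Artin representation. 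You can patch your argument the same way: after Clifford, treat the isotypic case by noting that it forces equal Hodge--Tate weights on an open subgroup of inertia at any $v\mid p$, contradicting regularity of $\pi$.

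A secondary point: once you have $\rho_{\pi,p}\cong\Ind_{G_{F'}}^{G_F}\chi$, the cleanest way to reach a contradiction with ``$\pi$ not CM'' is to observe that this forces $\rho_{\pi,p}\otimes\eta_{F'/F}\cong\rho_{\pi,p}$, hence by Chebotarev and strong multiplicity one $\pi\otimes\eta_{F'/F}\cong\pi$, i.e.\ $\pi$ is dihedral; regularity at infinity then forces $F'$ to be CM. Invoking automorphic induction is heavier than necessary here.
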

\begin{proof}
First, the remark at the end of \cite{skinner:hilbert} shows
that $\rho_{\pi,p}$ is irreducible. We will apply \cite[Proposition
1.0.14]{patrikis:tate} to
$\rho_{\pi,p}$ which states that an irreducible Galois representation of a compatible system is of the form $\Ind(\tau\otimes\sigma)$ where $\tau$ is Lie irreducible and $\sigma$ is Artin. Since $\pi$ is not CM, the irreducibility of $\rho_{\pi,p}$ implies that $\rho_{\pi,p}$ is either Lie irreducible or Artin. But $\pi$ is cohomological and $\rho_{\pi,p}|_{G_{F_v}}$ has Hodge--Tate weights $0$ and $1-k_v<0$ and so it cannot be Artin. Finally, the restriction of $\rho_{\pi,p}$ to any open subgroup will contain $\SL(2, \overline{\QQ}_p)$. Since the symmetric power representation of $\SL(2)$ is irreducible, it follows that the restriction of $\Sym^n\rho_{\pi,p}$ to any open subgroup will be irreducible.
\end{proof}

\subsection{Regular submodules}\label{sect:regular submodules}
As previously discussed, $p$-adic $L$-functions are expected to be attached to Galois representations and a choice of regular submodule. In this section, we will describe the possible regular submodules in the case of twists of symmetric powers of Galois representations.

Suppose $F$ is a totally real field and $\pi$ is a cohomological Hilbert modular form of infinity type $(k_1,\ldots, k_d,w)$ over $F$. Let $V_{2n}=\Sym^{2n}\rho_{\pi,p}\otimes\det^{-n}\rho_{\pi,p}$ and 
$V_{2n,v}=V_{2n}|_{G_{F_v}}$. We will classify the regular submodules of $\D_{\st}(V_{2n,v})$ in the case when $p$ splits completely in $F$ and $\pi$ is Iwahori spherical at places $v\mid p$.

Local-global compatibility describes the representation
$\rho_{\pi,p}|_{G_{F_v}}$ completely whenever $v\nmid p$, but
not so when $v\mid p$. We now make explicit the possibilities
for the $p$-adic Galois representation at places $v\mid p$ and in the process we choose a suitable regular submodule of $\D_{\st}(V_{2n,v})$.

Since $p$ splits completely in $F$, $F_v\cong\QQ_p$. The
Galois representation $V=\rho_{\pi,p}|_{G_{F_v}}$ is de Rham
with Hodge--Tate weights $(w-k_v)/2$ and $(w+k_{v}-2)/2$ where we denote by $v$ as
well the unique infinite place in $I_v$. Since $\pi$ is Iwahori
spherical there are two possibilities: either
$\pi_v=\St\otimes\mu$, where $\mu$ is an unramified character, or $\pi_v$ is the unramified principal
series with characters $\mu_1$ and $\mu_2$. 

If $\pi_v=\St\otimes\mu$ then $V$ is semistable but not
crystalline, $k_v$ is even, $\D_{\st}^*(V)=\QQ_p
e_1\oplus \QQ_pe_2$, $\varphi=\begin{pmatrix} \lambda& \\
& p \lambda \end{pmatrix}$ for $\lambda=\mu(\Frob_p)$ with
$v_p(\lambda)=(k_v-2)/2$ and $N=\begin{pmatrix}0 &1\\
0&0 \end{pmatrix}$. The filtration jumps at the Hodge--Tate weights $(w-k_v)/2$ and $(w+k_v-2)/2$ and the proper filtered pieces are given by 
$\QQ_p(e_2-\mathcal{L} e_1)$ for some
$\mathcal{L} \in \QQ_p$, which is the Fontaine--Mazur $\mathcal{L}$-invariant of $V$ (and the Benois $\mathcal{L}$-invariant of $V$). Writing $f_{i}=e_1^{n+i}e_2^{n-i}$, with $n\geq i\geq-n$, for the basis of $\D_{\st}(V_{2n,v})$, the Frobeniuns map $\varphi(f_i)=p^{-i}f_i$ is represented by a diagonal matrix while the monodromy is upper triangular with off-diagonal entries $2n,2n-1,\ldots,1$. The $(\varphi,N)$-stable submodules of $\D_{\st}(V_{2n,v})$ are the spans $\langle f_{n}, f_{n-1}, \ldots, f_i\rangle$. Note that
\[\Fil^0\D_{\dR}^*(V_{2n,v})=\left\langle(e_2-\mathcal{L} e_1)^{2n}, (e_2-\mathcal{L} e_1)^{2n-1}e_1,(e_2-\mathcal{L} e_1)^{2n-1}e_2,\ldots,(e_2-\mathcal{L} e_1)^ne_1e_2^{n-1},(e_2-\mathcal{L} e_1)^ne_2^n\right\rangle
\]
and hence is $(n+1)$-dimensional. Thus, a regular submodule must be $n$-dimensional. The only $n$-dimensional $(\varphi,N)$-stable submodule is $D=\langle f_{n},\ldots,f_{1}\rangle$
and if $\mathcal{L} \neq 0$ (as is expected), then $D$ is regular. In this case, $\D_{\st}(V_{2n,v})^{\varphi=1}=\langle f_0\rangle$ and $D^{\varphi=p^{-1}}=\langle f_1\rangle$. Thus, $D_1=\langle f_n,\dots, f_0\rangle$ (since $N^{-1}(\langle f_1\rangle)=\langle f_0\rangle$) and $D_{-1}=\langle f_n,\dots,f_2\rangle$.

If $\pi_v$ is unramified then $V$ is crystalline. Let
$L/\QQ_p$ be the finite extension generated by the roots
$\alpha=\mu_1(\varpi_v)$ and $\beta=\mu_2(\varpi_v)$ of
$x^2-a_vx+p^{k_v-1}$ with $v_p(\alpha)\leq v_p(\beta)$. Then after base change to $L$,
$\D_{\cris}(V)^*=Le_1 \oplus Le_2$. There are now two possibilities. Either the local representation $V$ splits as $\mu\oplus\mu^{-1}\c^{k-1}$, where
$\mu$ is unramified, or
$V$ is indecomposable. The former case is expected to occur only
when $\pi$ is CM.

If $V$ splits then $\pi_v$ is ordinary $\D_{\cris}^*(V_{2n,v})=\bigoplus_{i=-n}^nLt^{i(k_v-1)}$ (basis $f_i = e_1^{n+i}e_2^{n-i}$), $\varphi$ has
eigenvalues $\alpha^{2n}p^{n(k_v-1)}, \ldots, \alpha^{-2n}p^{-n(k_v-1)}$
where $\mu(\Frob_p)=\alpha$. The de Rham tangent space is
$\D_{\cris}^*(V_{2n})/\Fil^0\D_{\cris}^*(V_{2n})=L f_{1}\oplus
\cdots\oplus L f_{n}$ and so the only regular
subspace is $D=L f_{1}\oplus \cdots\oplus
L f_{n}$. In this case the filtration on $D$ is
given by $D_{-1}=D_0=D$ and $D_1=D\oplus Lf_0$.

If $V$ is not split, we will assume that $\alpha/\beta\not\in\mu_\infty$. Then, we choose $e_1$ and $e_2$ to be eigenvectors of $\varphi$ so that $\D_{\cris}^*(V)=L e_1\oplus Le_2$ with
$\varphi=\begin{pmatrix} \alpha &\\ &\beta \end{pmatrix}$. Moreover, we can scale $e_1$ and $e_2$ so that the one-dimensional filtered piece is $\langle e_1+e_2\rangle$. We remark that $V$ is
reducible if and only if it is
ordinary. Again taking $f_i = e_1^{n+i}e_2^{n-i}$ as a basis, the
Frobenius on $\D_{\cris}^*(V_{2n,v})$ is diagonal with $\varphi(f_i)=(\alpha/\beta)^i$. The de Rham tangent space is
generated by homogeneous polynomials in $e_1$ and $e_2$ which
are not divisible by $(e_1+e_2)^n$. Thus, any choice of $n$ basis
vectors in $f_{n},\ldots, f_{-n}$ will generate a regular
submodule. The assumption that $\alpha/\beta\not\in\mu_\infty$ implies that $\phi(f_i)=f_i$ only for $i=0$. Since $\alpha$ and $\beta$ are Weil numbers of the same complex absolute value, the eigenvalue $p^{-1}$ does not show up. Therefore, no matter what choice of $D$ we take, we have $D_{-1}=D_0=D$. We choose the regular submodule $D=\langle f_n,f_{n-1},\ldots, f_1\rangle$. Since $f_0$ is not among the chosen basis vectors, $D_1 = D\oplus\langle f_0\rangle=\langle{f_n,f_{n-1},\dots,f_0}\rangle$.

In all cases, we have $D_0=D=\langle{f_n,f_{n-1},\dots,f_1}\rangle$ and $D\oplus\langle f_0\rangle=\langle{f_n,f_{n-1},\dots,f_0}\rangle$. Therefore,
\[\gr_1\Drig(V_{2n,v})\cong \mathcal{R}_L.\]

\subsection{Automorphy of symmetric powers}
Let $\pi$ be a cuspidal automorphic representation of
$\GL(2,\AA_F)$ over a totally real field $F$ as in \S \ref{sect:hmf def}. We say that $\Sym^n\pi$ is (cuspidal) automorphic on $\GL(n+1,\AA_{F'})$ for a number field $F'/F$ if there exists a (cuspidal) automorphic representation $\Pi_n$ of $\GL(n+1,\AA_{F'})$ such that $L(\Pi_n,\std,s)=L(\BC_{F'/F}(\pi),\Sym^n,s)$. A cuspidal representation $\sigma$ of $\GL(2, \AA_{F})$ is said to be dihedral if it is isomorphic to its twist by a quadratic character, in which case there exists a CM extension $E/F$ and a character $\psi:\AA_E^\times/E^\times\to \CC^\times$ such that $\sigma\cong\AI_{E/F}\psi$; we say that $\sigma$ is tetrahedral (resp.~octahedral) if $\Sym^2\sigma$ (resp.~$\Sym^3\sigma$) is cuspidal and is isomorphic to its twist by a cubic (resp.~quadratic) character; we say that $\sigma$ is solvable polyhedral if $\sigma$ is dihedral, tetrahedral or octahedral; we say that $\sigma$ is icosahedral if $\Sym^5\sigma\cong\Ad(\tau)\sqtimes\sigma\otimes\omega_\sigma^2$ for some cuspidal representation $\tau$ of $\GL(2, \AA_F)$ and if $\sigma$ is not solvable polyhedral. The cuspidal representation $\pi$, being associated with a regular Hilbert modular form, is either dihedral (if the Hilbert modular form is CM) or not polyhedral.
\begin{theorem}\label{t:automorphy of sym}Suppose $\pi$ is as in \S \ref{sect:hmf def}. Then
\begin{enumerate}
\item $\Sym^m\pi$ is automorphic for $m=2$ (\cite[Theorem 9.3]{gelbart-jacquet}), $m=3$ (\cite[Corollary 1.6]{kim-shahidi:sym3} and \cite[Theorem 2.2.2]{kim-shahidi:cuspidality-sym}) and $m=4$ (\cite[Theorem B]{kim:sym4} and \cite[3.3.7]{kim-shahidi:cuspidality-sym}); it is cuspidal unless $\pi$ is CM.

\item Suppose $\pi$ is not CM. If $\Sym^5\pi$ is automorphic
then it is cuspidal. If $m\geq 6$ suppose either that
$\Sym^k\pi$ is automorphic for $k\leq 2m$ or that $\pi\sqtimes \tau$ is automorphic for any cuspidal representation $\tau$ of $\GL(r, \AA_{F})$ where $r\leq \lfloor m/2+1\rfloor$. Then $\Sym^m\pi$ is cuspidal (\cite[Theorem A']{ramakrishnan:remarks-sym}).
\end{enumerate}
\end{theorem}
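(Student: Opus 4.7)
The statement is essentially a compilation of deep functoriality results, so the plan is to assemble it from the references already attached to each clause, with the only real verification being that the "non-CM" hypothesis is strong enough to exclude every exceptional polyhedral case that could break cuspidality. The key observation is the remark made immediately before the statement: since $\pi$ is cohomological, hence regular, its projective Galois representation cannot have finite image, so $\pi$ is either dihedral (equivalently CM, by the Galois-theoretic characterization) or not solvable polyhedral and not icosahedral. I will use this dichotomy throughout.

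For part (1), each case reduces to invoking the cited construction: for $m=2$, Gelbart--Jacquet's integral representation produces the transfer to $\GL(3)$; for $m=3$, Kim--Shahidi use the Langlands--Shahidi method via the exceptional group $E_6$ (or $G_2$) to produce the lift to $\GL(4)$; and for $m=4$, Kim applies the exterior square on $\GL(4)$ composed with $\Sym^2$ to obtain the lift to $\GL(5)$. Each method works over any number field, so the totally real hypothesis is immaterial. The cuspidality assertions then follow from the Kim--Shahidi cuspidality criteria: $\Sym^m\pi$ fails to be cuspidal only when $\pi$ is solvable polyhedral of a specific type (dihedral for $m=2$; dihedral or tetrahedral for $m=3$; dihedral, tetrahedral, or octahedral for $m=4$), and by the dichotomy above the non-CM assumption excludes all of these.

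For part (2), the $m=5$ case is handled in the same spirit: assuming $\Sym^5\pi$ is automorphic, it fails to be cuspidal only if $\pi$ is solvable polyhedral or icosahedral, and again the non-CM dichotomy rules this out. For $m\geq 6$, the plan is to apply Ramakrishnan's criterion from \cite{ramakrishnan:remarks-sym}: its hypothesis takes one of the two equivalent forms listed in the statement (automorphy of $\Sym^k\pi$ for $k\leq 2m$, or automorphy of Rankin--Selberg products $\pi\sqtimes\tau$ with $\tau$ of rank at most $\lfloor m/2+1\rfloor$), and under either hypothesis the criterion directly outputs cuspidality of $\Sym^m\pi$.

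There is no genuine obstacle here beyond bookkeeping; the subtlety to watch is simply matching the paper's convention that "CM" means "dihedral" against the polyhedral taxonomy used in the cited references, and verifying that cohomologicality of $\pi$ (through the archimedean discrete series condition) actually forces the non-polyhedral alternative whenever $\pi$ is not CM.
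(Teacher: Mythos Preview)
Your proposal is correct and matches the paper's approach: the paper gives no proof at all for this theorem, treating it purely as a compilation of the cited results (Gelbart--Jacquet, Kim--Shahidi, Kim, Ramakrishnan), with the only supporting remark being the sentence immediately preceding the statement that a cohomological $\pi$ is either dihedral (CM) or not polyhedral. Your write-up simply unpacks those citations and that dichotomy, which is exactly what the paper intends.
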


\begin{remark}
Results on the automorphy of $\Sym^n\pi$ for small values of $n$ have been obtained by Clozel and Thorne assuming conjectures on level raising and automorphy of tensor products. Potential automorphy results follow from automorphy lifting theorems.
\end{remark}

\subsection{Functorial transfers to unitary and symplectic
  groups}
To compute $\mathcal{L}$-invariants, we use $p$-adic families of Galois representations. However, since $\GL(n)$ for $n>2$ has no associated Shimura variety, to
construct $p$-adic families of automorphic representations, we transfer to unitary or symplectic groups.

We begin with $\GSp(2n)$. Let $\omega_0$ be the $n\times n$ antidiagonal matrix with 1-s
on the antidiagonal and let $\GSp(2n)$ be the reductive group of
matrices $X\in \GL(2n)$ such that $X^TJX=\nu(X)J$ where
$J=\begin{pmatrix} & \omega_0\\-\omega_0&\end{pmatrix}$ and
$\nu(X)\in \mathbb{G}_m$ is the multiplier character. The
diagonal maximal torus $T$ consists of matrices $t(x_1,\ldots,
x_g, z)$ with $(x_1,\ldots, x_g, zx_g^{-1},\ldots, zx_1^{-1})$
on the diagonal. The Weyl group $W$ of $\GSp(2n)$ is $S_n\rtimes
(\ZZ/2 \ZZ)^n$ and thus any $w\in W$ can be
written as a pair $w=(\nu, \varepsilon)$ where $\nu\in S_n$ is a
permutation and $\varepsilon:\{1,\ldots, n\}\to \{-1,1\}$ is a
function. The Weyl group acts by conjugation on $T$ and $(\nu,
\varepsilon)$ takes $t(x_1, \ldots, x_n, z)$ to $t(x'_{\nu(1)},
\ldots, x'_{\nu(n)}, z)$ where $x'_i = x_i$ if
$\varepsilon(i)=1$ and $x'_i = zx_i^{-1}$ if
$\varepsilon(i)=-1$. We choose as a basis for $X^\bullet(T)$ the characters $e_i(t(x_i;z))=x_iz^{-1/2}$ and $e_0((x_i;z))=z^{1/2}$. 
Let $B$ be the Borel subgroup of upper triangular matrices, which corresponds to the choice of simple roots $e_i-e_{i+1}$ for $i<n$ and $2e_n$. Half the sum of positive roots is then $\rho = \sum_{i=1}^n(n+1-i)e_i$. The compact roots are $\pm(e_i-e_j)$ and so half the sum of compact roots is $\rho_c = \sum_{i=1}^n(n-2i+1)/2 e_i$. This gives the relationship between Harish-Chandra and Blattner parameters for $\GSp(4)$ as $\lambda_{\operatorname{HC}}+\sum_{i=1}^n ie_i = \lambda_{\operatorname{Blattner}}$.
 
 \begin{theorem}[Ramakrishnan--Shahidi]\label{t:GL(4) to GSp(4)}
Let $\pi$ be as in \S \ref{sect:hmf def} of infinity type $(k_1,\ldots,k_d,w)$. If $\pi$ is not CM there exists a cuspidal automorphic representation $\Pi$ of $\GSp(4,\AA_{F})$ which is a strong lift of $\Sym^3\pi$ from $\GL(4,\AA_F)$ such that for every infinite place $\tau$, $\Pi_\tau$ is the holomorphic discrete series with Harish-Chandra parameters $(2(k_\tau-1),k_\tau-1;-3w/2)$. 
\end{theorem}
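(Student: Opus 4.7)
The plan is to combine the Kim--Shahidi symmetric cube lifting with a descent from $\GL(4)$ to $\GSp(4)$. First, I would apply Theorem \ref{t:automorphy of sym}(1) with $m = 3$; since $\pi$ is not CM, this produces a cuspidal automorphic representation $\sigma := \Sym^3\pi$ of $\GL(4, \AA_F)$, with central character $\omega_\pi^3$.

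Next, I would verify that $\sigma$ is essentially self-dual of symplectic type. The key observation is that the map $\Sym^3 : \GL_2(\CC) \to \GL_4(\CC)$ factors through $\GSp_4(\CC)$, because $\Sym^3$ of the standard representation of $\GL_2$ preserves the natural symplectic pairing on $\Sym^3 \CC^2$ up to the similitude factor $\det^3$. This gives $\sigma^\vee \cong \sigma \otimes \omega_\pi^{-3}$; equivalently, a suitable twist of the partial exterior square $L$-function $L^S(s, \sigma, \wedge^2)$ acquires a pole at $s = 1$, which is the standard criterion for symplectic type.

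Then I would invoke the Asgari--Shahidi backward lifting from $\GL(4)$ to $\GSp(4)$, which combines the Ginzburg--Rallis--Soudry descent with the converse theorem and the pole of the exterior square $L$-function to produce a cuspidal globally generic representation $\Pi'$ of $\GSp(4, \AA_F)$ whose functorial transfer to $\GL(4)$ is $\sigma$. To conclude, I would swap each generic archimedean component of $\Pi'$ for the unique holomorphic member of the same discrete series $L$-packet and appeal to Arthur's multiplicity formula for $\GSp(4)$ to produce a global cuspidal $\Pi$ in the same global $L$-packet, with holomorphic components at every infinite place and identical finite components. The Harish-Chandra parameters $(2(k_\tau-1), k_\tau-1; -3w/2)$ then come from composing $\Sym^3$ with the explicit parameter $\phi_{k_\tau}$ from \S\ref{sect:hmf def}, reading off the resulting discrete parameter for $\GSp(4, \RR)$, and converting via the shift by $\rho_c = \sum_i (n - 2i + 1)/2 \cdot e_i$ computed above.

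The hardest step should be the archimedean shift combined with the descent: exchanging generic for holomorphic discrete series at every infinite place while maintaining cuspidality and the $L$-function identity at the finite places requires Arthur's endoscopic classification for $\GSp(4)$ together with care in tracking the similitude character. In the $F = \QQ$ case this is exactly the technique of Ramakrishnan--Shahidi; for general totally real $F$ the same method applies once one verifies the archimedean $L$-packet structure at each real place, which presents no new difficulties since $F_\tau \cong \RR$.
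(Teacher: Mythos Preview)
Your proposal is correct and follows essentially the same route as the paper: cuspidality of $\Sym^3\pi$ on $\GL(4)$, the exterior-square pole to certify symplectic type, descent to a globally generic cuspidal representation of $\GSp(4)$, then Arthur's classification to switch archimedean components to holomorphic discrete series. The paper cites Gan--Takeda rather than Asgari--Shahidi for the descent step and explicitly invokes Wallach's theorem (tempered at infinity forces cuspidality once Arthur places $\Pi$ in the discrete spectrum), a point you allude to but should make precise; also, the Harish-Chandra parameter is read off directly from the Langlands parameter $\Sym^3\phi_{\pi_\tau}$ without any $\rho_c$-shift (that shift converts Harish-Chandra to Blattner, not Langlands to Harish-Chandra).
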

\begin{proof}
When $F=\QQ$ and the weight is even this is \cite[Theorem A$^\prime$]{ramakrishnan-shahidi}. Ramakrishnan remarks that the proof of this result should also work for totally real fields. We give a proof using Arthur's results on the discrete spectrum of symplectic groups.

Let $\sigma$ be the cuspidal representation of $\GL(4,
\AA_F)$ whose $L$-function coincides with that of
$\Sym^3\pi$. Then
$L(\wedge^2\sigma\otimes\det^3\rho_{\pi,p},s)=L(\wedge^2\Sym^3\rho_{\pi,p}\otimes\det^3\rho_{\pi,p},s)=\zeta(s)L(\Sym^4\rho_{\pi,p}\otimes\det^2\rho_{\pi,p},s)$
has a pole at $s=1$ (for example because $\Sym^4\pi$ is cuspidal
automorphic) so \cite[Theorem
12.1]{gan-takeda:local-langlands-GSp(4)} shows that there exists
a globally generic (i.e., having a global Whittaker model)
cuspidal representation $\Pi^g$ of $\GSp(4, \AA_F)$
strongly equivalent to $\sigma$, and thus to $\Sym^3\pi$. For
every infinite place $\tau$ the representation $\Pi^g_\tau$ will
be a generic (nonholomorphic) discrete series
representation. Let $\psi$ be the global $A$-parameter attached
to $\Pi^g$; since $\Pi^g$ is globally generic with cuspidal lift
to $\GL(4)$, $\psi$ will be simple generic and therefore the
$A$-parameter $\psi$ is in fact an $L$-parameter and the
component group of $\psi$ is trivial. Let
$\Pi=(\otimes_{\tau\mid\infty}\Pi^h_\tau)\otimes(\otimes_{v\nmid\infty}\Pi^g_v)$
be the representation of $\GSp(4, \AA_F)$ obtained using
the holomorphic discrete series $\Pi^h_\tau$ in the same
archimedean local $L$-packet as $\Pi^g_\tau$. Then Arthur's
description of the discrete automorphic spectrum for $\GSp(4)$
implies that $\Pi$ is an automorphic representation (for
convenience, see \cite[Theorem 2.2]{mok:siegel-hilbert}). Since
the representations $\Pi_\tau$ at infinite places $\tau$ are
discrete series, they are also tempered and so \cite[Theorem
4.3]{wallach:constant} implies that $\Pi$ will also be
cuspidal. By construction, $\Pi$ will be strongly equivalent to
$\Sym^3\pi$.

It remains to compute the Harish-Chandra parameter of
$\Pi_\tau$. Let $\phi_{\pi_\tau}$ and $\phi_{\Pi_\tau}$ be the
Langlands parameters of $\pi_\tau$ respectively $\Pi_\tau$. Then
$\phi_{\Pi_\tau}=\Sym^3\phi_{\pi_\tau}$ and so
\[\phi_{\Pi_\tau}(z)=|z|^{-3w}\begin{pmatrix}
(z/\overline{z})^{(3(k_\tau-1))/2}&&&\\ &
(z/\overline{z})^{(k_\tau-1)/2}&&\\ &&
(z/\overline{z})^{-(k_\tau-1)/2}&\\ &&&
(z/\overline{z})^{-(3(k_\tau-1))/2}\end{pmatrix}\]
The recipe from \cite[\S 2.1.2]{sorensen:gsp4} shows that the $L$-packet defined by $\phi_{\Pi_\tau}$ consists of the holomorphic and generic discrete series with Harish-Chandra parameters $(2(k_\tau-1),k_\tau-1;-3w/2)$. 

\end{proof}

For higher $n$ one does not yet have transfers from $\GL(n)$ to
similitude symplectic groups, although one may first transfer from
$\GL(2n+1)$ to $\Sp(2n)$ and then lift to $\GSp(2n)$.
\begin{theorem}\label{t:GL(2n+1) to Sp(2n)}Let $F$ be a totally real field and $\pi$ be a regular algebraic cuspidal automorphic self-dual
representation of $\GL(2n+1, \AA_F)$ with trivial central character. Then there exists a cusidal automorphic representation $\overline{\sigma}$ of $\Sp(2n, \AA_F)$ which is a weak functorial transfer of $\pi$ such that $\overline{\sigma}$ is a holomorphic discrete series at infinite places. If, moreover, $\pi$ is the symmetric $2n$-th power of a cohomological Hilbert modular form then there exists a cuspidal representation 
$\sigma$ of $\GSp(2n,\AA_F)$ which is a holomorphic discrete series at infinity and such that any irreducible component of the restriction $\sigma|_{\Sp(2n,\AA_F)}$ is in the same global $L$-packet at $\overline{\sigma}$.
\end{theorem}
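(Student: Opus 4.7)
The plan has two parts: first build the descent $\overline{\sigma}$ on $\Sp(2n,\AA_F)$ via Arthur's endoscopic classification of the discrete spectrum, and then (in the symmetric-power case) extend to a cuspidal representation $\sigma$ on $\GSp(2n,\AA_F)$ by a suitable choice of central character.

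For the first part, I would observe that the dual group of $\Sp(2n)$ is $\SO(2n+1,\CC)$, and that for odd $N=2n+1$ any self-dual cuspidal $\pi$ of $\GL(N,\AA_F)$ with trivial central character is automatically of orthogonal type (the symmetric-square $L$-function having a pole at $s=1$ is forced by parity). Thus $\pi$ determines a simple generic formal $A$-parameter $\psi$ for $\Sp(2n)$. The argument then parallels the one given for Theorem \ref{t:GL(4) to GSp(4)}, with $\Sp(2n)$ in place of $\GSp(4)$ and using the symplectic case of Arthur's classification (cf.\ \cite{mok:siegel-hilbert}): since $\psi$ is simple generic, its component group is trivial and Arthur's multiplicity formula degenerates, so that any choice of local representation in each local $L$-packet yields an automorphic representation. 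At each archimedean place I would pick the holomorphic discrete series member of the $L$-packet; its existence is a Harish-Chandra parameter computation analogous to the one in Theorem \ref{t:GL(4) to GSp(4)}, using that the Hodge--Tate weights of $\pi$ are regular. Temperedness at infinity together with Wallach's constant-term criterion \cite[Theorem 4.3]{wallach:constant} then yields cuspidality of $\overline{\sigma}$.

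For the lift to $\GSp(2n)$, I would exploit the short exact sequence $1\to\Sp(2n)\to\GSp(2n)\xrightarrow{\nu}\mathbb{G}_m\to 1$ together with the identity $Z(\GSp(2n))\cap\Sp(2n)=\mu_2$. Since $Z(\GSp(2n))(\AA_F)\cdot\Sp(2n,\AA_F)$ is of finite index in $\GSp(2n,\AA_F)$, extending $\overline{\sigma}$ by a Hecke character $\omega:\AA_F^\times\to\CC^\times$ of $Z(\GSp(2n))(\AA_F)=\AA_F^\times$ whose restriction to $\mu_2(\AA_F)$ matches the central character of $\overline{\sigma}$ on $\mu_2$, and then inducing, gives a cuspidal automorphic $\sigma$ of $\GSp(2n,\AA_F)$. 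The natural choice of $\omega$ is dictated by $\pi=\Sym^{2n}\pi_0$: the composite $\Sym^{2n}\circ\phi_{\pi_0}$ lifts canonically to $\GSpin(2n+1,\CC)$ (the dual group of $\GSp(2n)$), with similitude controlled by a power of the central character of $\pi_0$. Holomorphicity at infinity is preserved under the extension, and by construction every irreducible component of $\sigma|_{\Sp(2n,\AA_F)}$ lies in the global $L$-packet of $\overline{\sigma}$.

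The main obstacle is the first step: appealing to Arthur's endoscopic classification for $\Sp(2n)$ depends on the stabilization of the twisted trace formula (this is the source of the conditional statements following Theorems \ref{thmC} and \ref{thmD}). Even granted the classification, one still has to verify that the archimedean $L$-packet attached to the simple generic parameter $\psi$ genuinely contains a holomorphic discrete series, which reduces, as in the $\GSp(4)$ case, to a Harish-Chandra parameter check on $\Sym^{2n}\phi_{\pi_{0,\tau}}$ at each infinite place $\tau$. By comparison, the second step is essentially formal once $\overline{\sigma}$ is in hand.
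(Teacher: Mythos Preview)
Your first step is a valid route to $\overline{\sigma}$, though it differs from the paper's. The paper first invokes the Ginzburg--Rallis--Soudry automorphic descent \cite[Theorem~3.1]{ginzburg-rallis-soudry:descent} to produce a globally generic cuspidal transfer $\tau$ of $\pi$ to $\Sp(2n,\AA_F)$ (this step is unconditional), and only then appeals to Arthur's classification to replace $\tau_\infty$ by the holomorphic member of its $L$-packet. Your approach bypasses GRS entirely and uses Arthur's classification from the outset: since $\psi=\pi$ is simple generic with trivial $S_\psi$, every coherent choice of local members is discrete-automorphic, and Wallach gives cuspidality. This is cleaner but places the entire burden on Arthur, whereas the paper's route isolates the conditional input to the archimedean switch.

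Your second step, however, has a genuine gap. The claim that $Z(\GSp(2n))(\AA_F)\cdot\Sp(2n,\AA_F)$ has finite index in $\GSp(2n,\AA_F)$ is false: the similitude character induces an isomorphism of the quotient with $\AA_F^\times/(\AA_F^\times)^2$, which is compact but infinite (already $\prod_v \mathcal{O}_v^\times/(\mathcal{O}_v^\times)^2$ is an infinite product of groups of order $\geq 2$). Consequently the ``extend by $\omega$ and induce'' construction does not produce an admissible automorphic representation of $\GSp(2n,\AA_F)$ in any straightforward way, and even granting some constituent one would still need to establish automorphy on the larger group. The observation that $\Sym^{2n}\phi_{\pi_0}$ lifts naturally to $\GSpin(2n+1,\CC)$ is exactly the right heuristic, but turning a lift of $L$-parameters into a lift of automorphic representations along $\GSp(2n)\twoheadrightarrow\GSp(2n)/\mathbb{G}_m$ is precisely the content of Patrikis's results \cite[Proposition~12.2.2, Corollary~12.2.4, Proposition~12.3.3]{patrikis:tate}, which the paper invokes; note in particular that Proposition~12.3.3 has a hypothesis which is verified using the fact that $\pi$ comes from a \emph{cohomological} Hilbert modular form, a point your sketch does not address.
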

\begin{proof}
We will use \cite[Theorem
3.1]{ginzburg-rallis-soudry:descent} to produce an irreducible
cuspidal globally generic functorial transfer $\tau$ of a cuspidal self-dual representation $\pi$ of $\GL(2n+1, \AA_F)$ to
$\Sp(2n, \AA_F)$, and all citations in this paragraph are
from \cite{ginzburg-rallis-soudry:descent}. In the notation of
\S 2.3 the group $H$ is taken to be the metaplectic double cover
$\widetilde{\Sp}(4n+2)$ of $\Sp(4n+2)$ (this is case 10 from
(3.43)) in which case, in the notation of \S 2.4, $L(\pi,
\alpha^{(1)},s)=1$ and $L(\pi,\alpha^{(2)}, s)=L(\pi,\Sym^2,s)$
which has a pole at $s=1$ since $\pi\cong \pi^\vee$. Let $E_\pi$
be the irreducible representation of $H(\AA_F)$ of
Theorem 2.1 generated by the residues of certain Eisenstein
series attached to the representation $\pi$ thought of as a
representation of the maximal parabolic of $H$. One finds $\tau$
as an irreducible summand of the automorphic representation
generated by the Fourier--Jacobi coefficients of $E_\pi$ where,
in the notation of \S 3.6, one takes $\gamma=1$. The fact that
$\tau$ is cuspidal and globally generic and a strong transfer of
$\pi$ then follows from Theorem 3.1.

Next, Arthur's global classification of the discrete spectrum of
$\Sp(2n)$ (\cite[Theorem 1.5.2]{arthur:endoscopic-classification}) implies that there exists a cuspidal automorphic representation $\tau'$ which is isomorphic to $\tau$ at all finite places and the representation $\tau'_\infty$ is a holomorphic discrete series in the same $L$-packet at $\tau_\infty$.

Finally, Proposition 12.2.2, Corollary 12.2.4 and Proposition 12.3.3 (the fact that $\pi$ is the symmetric $2n$-th power of a {\it cohomological} Hilbert modular form implies that hypothesis (2) of this proposition is satisfied) of \cite{patrikis:tate} imply the existence of a regular algebraic cuspidal automorphic representation $\sigma$ of $\GSp(2n, \AA_F)$ such that if $v$ is either an infinite place or a finite place such that $\sigma_v$ is unramified then $\sigma_v|_{\Sp(2n, F_v)}$ contains $\tau'_v$. Moreover, the discrete series $\sigma_\infty$ is holomorphic or else its restriction to $\Sp(2n, \RR)$ would not contain the holomorphic discrete series $\tau'_\infty$.
\end{proof}

One reason to seek a formula for symmetric power $\mathcal{L}$-invariants in terms of $p$-adic families on a certain reductive group is that it might yield a proof of a trivial zero conjecture for symmetric powers of Hilbert modular forms following the template of the Greenberg--Stevens proof of the Mazur--Tate--Teitelbaum conjecture. This method for proving such conjectures requires the existence of $p$-adic $L$-functions for these $p$-adic families. Whereas there has been little progress towards such $p$-adic families of $p$-adic $L$-functions on symplectic groups in general, the work of Eischen--Harris--Li--Skinner is expected to yield such $p$-adic $L$-functions in the case of unitary groups. We will therefore present a computation (however, under some restrictions) of the symmetric power $\mathcal{L}$-invariants using unitary groups.

For a CM extension $E$ of a totally real
field $F$, let $U_n$ be the unitary group defined in
\cite[Definition 6.2.2]{bellaiche-chenevier:selmer} which is
definite at every finite place and if $n\neq 2\mod{4}$ is
quasi-split at every nonsplit place of $F$. We denote by $\BC$
the local base change (see, for example, \cite[\S 2.3]{shin:clay09}). Assume the following conjecture on strong base change that would follow from stabilization of the trace formula.

\begin{conjecture}\label{t:GL(n) to U_n}
Let $\pi$ be a regular algebraic conjugate self-dual cuspidal
automorphic representation of $\GL(n, \AA_E)$ such that $\pi_v$ is the base change from $U$ of square integrable representations at ramified places of $E$ and is either unramified or the base change from $U$ of square integrable represenations at places of $F$ which are inert in $E$. Suppose, moreover, that for at least one inert prime the local representation is not unramified. Then there exists a (necessarily cuspidal) automorphic representation $\Pi$ of $U_n(\AA_F)$ such that $\Pi_w=\BC(\pi_v)$ for all places $v$.
\end{conjecture}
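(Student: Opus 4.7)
The plan is to appeal to Arthur's endoscopic classification, as extended to quasi-split unitary groups by Mok and to inner forms by Kaletha--Minguez--Shin--White. Both of these frameworks are conditional precisely on the stabilization of the twisted trace formula for $\GL(n)$ with its outer automorphism, which is the ``stabilization of the trace formula'' input alluded to in the statement. Given that input, I would proceed in two stages: first produce an automorphic representation on the quasi-split inner form $U_n^*$, and then descend it to the specific inner form $U_n$ that is definite at every finite place.

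First I would observe that since $\pi$ is regular algebraic, cuspidal, and conjugate self-dual on $\GL(n,\AA_E)$, it defines a simple generic Arthur parameter $\psi$ for $U_n^*$. Applying Mok's classification, one obtains a discrete, and in fact cuspidal (since $\psi$ is simple generic), automorphic representation $\Pi^*$ of $U_n^*(\AA_F)$ whose local components satisfy $\BC(\Pi^*_v)=\pi_v$ at every place $v$ of $F$, with $\Pi^*_v$ being the element of the local $L$-packet of $\BC^{-1}(\pi_v)$ singled out by Mok's local classification. The hypotheses on $\pi$ at ramified and inert places are exactly what is needed for $\BC^{-1}(\pi_v)$ to be a legitimate packet on $U_n^*(F_v)$ (rather than merely on $\GL(n, E_v)$).

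Next I would transfer $\Pi^*$ from $U_n^*$ to $U_n$. By the Kaletha--Minguez--Shin--White inner form analogue of Mok, discrete automorphic representations in a given tempered packet can be transferred from $U_n^*$ to $U_n$ provided the corresponding local packets on $U_n(F_v)$ are non-empty at every place. At split places of $F$ in $E$ this is automatic. At inert or ramified places, the hypothesis that $\pi_v$ is the base change from $U$ of a square integrable representation guarantees non-emptiness of the local packet on $U_n(F_v)$, since square integrable local packets are supported on every pure inner form. The hypothesis that at least one inert prime carries a non-unramified (square integrable) local representation is needed to ensure that the global multiplicity formula selects a non-zero transfer to the specific inner form $U_n$: at unramified inert primes the local packet on $U_n(F_v)$ can be a strict subset of the corresponding packet on $U_n^*(F_v)$, so the presence of a single square integrable place breaks the resulting combinatorial sign ambiguity and forces a copy of $\Pi$ to appear in the discrete spectrum of $U_n$. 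Cuspidality of $\Pi$ is then automatic, since $\pi$ is cuspidal and base change is compatible with parabolic induction.

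The hard part will be the stabilization of the twisted trace formula itself, which is the non-trivial shared input of both Mok and KMSW; once it is granted, the substantive remaining work is the case-by-case analysis of local packets on the non-quasi-split form $U_n$ at the various types of places (split, inert unramified, inert ramified, archimedean) and the verification of the endoscopic character identities that govern the inner-form transfer. It is precisely in controlling these local packets on the non-quasi-split form that the hypotheses on square integrable base changes at ramified and inert places intervene, and weakening them would require a separate argument to rule out vanishing of $\Pi^*$ on $U_n$.
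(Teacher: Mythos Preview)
The statement you have attempted to prove is labeled in the paper as a \emph{Conjecture}, not a theorem or proposition, and the paper provides no proof whatsoever. The authors explicitly write ``Assume the following conjecture on strong base change that would follow from stabilization of the trace formula'' and then use it as a black box in Proposition~\ref{p:sym hmf to unitary}. There is therefore no paper proof to compare your proposal against.

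That said, your sketch is a reasonable outline of how the conjecture is expected to be established, and it is consistent with the one-line hint the authors give (namely, that it should follow from stabilization of the twisted trace formula). Invoking Mok for the quasi-split form and Kaletha--Minguez--Shin--White for the inner form is the standard route. A few points of caution: the paper describes $U_n$ as ``definite at every \emph{finite} place,'' which is presumably a typo for definite at every \emph{infinite} place (and quasi-split at finite places when $n\not\equiv 2\pmod 4$), so you should be careful about exactly which inner form you are transferring to. Also, your explanation of why the ``at least one inert prime with non-unramified local component'' hypothesis is needed is somewhat vague; the actual role of this condition is to guarantee that the relevant Jacquet--Langlands-type transfer to a possibly non-quasi-split inner form does not vanish, and making this precise requires tracking the sign characters in the multiplicity formula rather than an informal ``combinatorial sign ambiguity'' argument. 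But again, none of this can be checked against the paper, since the paper simply assumes the result.
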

We will use this conjecture to
transfer symmetric powers of Hilbert modular forms to unitary
groups.

\begin{proposition}\label{p:sym hmf to unitary}
Assume Conjecture \ref{t:GL(n) to U_n}. Let  $\pi$ be an Iwahori spherical cohomological non-CM Hilbert modular form over a number field $F$ in which $p$ splits completely. Suppose that there exist places $w_1$ and $w_2$ not above $p$ with the property that $\pi_{w}$ is special for $w\in\{w_1,w_2\}$ and suppose that $\Sym^n\pi$ is cuspidal automorphic over $F$. Then there exists a CM extension $E/F$ in which $p$ splits completely and a Hecke character $\psi$ of $E$ such that $\psi\otimes\BC_{E/F}\Sym^n\pi$ is the base change of a cuspidal automorphic representation of $U_n(\AA_F)$. 
\end{proposition}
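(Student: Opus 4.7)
The plan is to apply Conjecture~\ref{t:GL(n) to U_n} to the twisted base change $\Pi_E := \psi \otimes \BC_{E/F}(\Sym^n\pi)$, thus reducing the proposition to constructing $E$ and $\psi$ with the correct local behavior.

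For $E$, I would take $E = F(\sqrt{-\alpha})$ with $\alpha \in F$ totally positive, chosen by weak approximation to satisfy: $-\alpha$ is a local square at each $v \mid p$ (so $p$ splits in $E$); $-\alpha$ is a unit non-square at $w_1$ (so $w_1$ is inert in $E/F$); $\alpha$ has odd valuation at $w_2$ (so $w_2$ ramifies in $E/F$); and $\alpha$ is a local unit and a local square at every other finite place where $\pi$ ramifies, as well as at places above $2$ (so $E/F$ is unramified there). These amount to finitely many residue/unit conditions on $\alpha$ and are simultaneously satisfiable. By Lemma~\ref{l:hmf lie irreducible} together with the non-CM hypothesis, $\Sym^n\rho_{\pi,p}$ is Lie irreducible, which precludes any nontrivial quadratic self-twist of $\Sym^n\pi$; in particular $\Sym^n\pi \not\cong \Sym^n\pi \otimes \chi_{E/F}$ for free.

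For $\psi$, I would use class field theory. Combining $\BC_{E/F}(\Sym^n\pi)^c \cong \BC_{E/F}(\Sym^n\pi)$ with the identity $(\Sym^n\pi)^\vee \cong \Sym^n\pi \otimes \omega_\pi^{-n}$, conjugate self-duality $\Pi_E^\vee \cong \Pi_E^c$ becomes the equation $\psi \cdot \psi^c = \BC_{E/F}(\omega_\pi^{-n})$. Since $\psi\psi^c(x) = \psi(N_{E/F}(x))$ factors through the norm, this reduces to the condition $\psi|_{\AA_F^\times} \in \{\omega_\pi^{-n},\,\omega_\pi^{-n}\chi_{E/F}\}$ (the two cosets determined by $\AA_F^\times/N_{E/F}(\AA_E^\times)F^\times \cong \ZZ/2$). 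Because the inclusion $\AA_F^\times/F^\times \hookrightarrow \AA_E^\times/E^\times$ is injective, restriction of Hecke characters from $E$ to $F$ is surjective, and one may moreover choose $\psi$ to be an algebraic Hecke character of suitable infinity type so that $\Pi_E$ remains regular algebraic.

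Finally, I would verify the local hypotheses of Conjecture~\ref{t:GL(n) to U_n} for $\Pi_E$ place by place. Cuspidality of $\BC_{E/F}(\Sym^n\pi)$, hence of $\Pi_E$, follows from the no-self-twist property above. At split places of $E/F$, the hypothesis is automatic since the relevant unitary group becomes a general linear group and local base change is the diagonal map. At inert places where $\pi$ is unramified, $\Pi_{E,w}$ is unramified. At $w_1$ (inert) and $w_2$ (ramified), the Steinberg $\pi_{w_i}$ base changes locally to the Steinberg of $\GL_2(E_{w_i})$ up to an unramified twist, whose $n$-th symmetric power is the Steinberg of $\GL_{n+1}(E_{w_i})$, an essentially square integrable representation and the local base change of the Steinberg of the quasi-split unitary group at $w_i$; moreover $w_1$ supplies the required inert prime with non-unramified local component. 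At the remaining finite places, both $E/F$ and $\pi$ are unramified by construction, so $\Pi_{E,w}$ is unramified. The main obstacle lies in the simultaneous fulfillment of all these local conditions on $E/F$ together with the construction of an algebraic Hecke character $\psi$ realizing the required conjugate-self-dual twist; each individual piece is standard class-field-theoretic data, but their joint compatibility requires careful bookkeeping.
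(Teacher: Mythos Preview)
Your argument for cuspidality of $\BC_{E/F}(\Sym^n\pi)$ is correct and more direct than the paper's. You use that cyclic base change of a cuspidal representation is cuspidal unless there is a self-twist by the associated quadratic character, and rule out $\Sym^n\pi \cong \Sym^n\pi \otimes \chi_{E/F}$ via Lie irreducibility of $\Sym^n\rho_{\pi,p}$ (a nontrivial self-twist would force reducibility upon restriction to the open subgroup $\ker\chi_{E/F}$). The paper instead writes $\Pi = \Sym^n(\BC_{E/F}\pi)$ and runs through the dihedral, tetrahedral, octahedral, and icosahedral cases for $\pi_E$ separately, eliminating each by appeal to Lemma~\ref{l:hmf lie irreducible}; your route collapses this case analysis into a single step.

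There is, however, a genuine gap in your construction of $E$. Weak approximation lets you prescribe the class of $-\alpha$ in $F_v^\times/(F_v^\times)^2$ at the finitely many places in your set $S$, but it gives no control over $v(\alpha)$ for $v\notin S$. Generically $\alpha$ will have odd valuation at some $v_0\notin S$, so $E/F$ ramifies at $v_0$; since $\pi_{v_0}$ is unramified, the local component $(\psi\otimes\BC_{E/F}\Sym^n\pi)_{v_0}$ is an unramified principal series, which is not the local base change from $U$ of a square-integrable representation. The hypothesis of Conjecture~\ref{t:GL(n) to U_n} at ramified places then fails at $v_0$, so your sentence ``at the remaining finite places, both $E/F$ and $\pi$ are unramified by construction'' is unjustified. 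The paper sidesteps this by taking $\alpha$ to be an explicit product of uniformizers at $w_1$ and $w_2$, so that the divisor of $\alpha$ is supported on $\{w_1,w_2\}$ and no spurious finite ramification is introduced away from $2$. To repair your approach you would need to produce a quadratic character of $\AA_F^\times/F^\times$ with the prescribed local components on $S$ \emph{and} unramified outside $S$; this is a class-field-theoretic existence statement (subject to a parity constraint on the local data), not a consequence of weak approximation alone.
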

\begin{proof}
There exists $\alpha\in \{\varpi_{v_1}, \varpi_{v_2}, \varpi_{v_1}\varpi_{v_2}\}$ having a square root in
$\mathbb{F}_{p^{[F:\QQ]}}$. Then $E=F(\sqrt{\alpha})$ is ramified over
$F$ only at $w_1$ or $w_2$ (or both) and $p$ splits completely in $E$.

Suppose that $\Pi=\BC_{E/F}\Sym^n\pi$ is cuspidal
automorphic. Then $\Pi^{c\vee}\cong \Pi\otimes
\BC_{E/F}\omega_\pi^{-n}$. Choose a Hecke character
$\psi:E^\times\backslash \AA_E^\times\to
\CC^\times$ such that $\psi|_{\AA_F^\times} =
\omega_\pi^{-n}$. Then $(\psi\otimes\Pi)^{c\vee}\cong
\psi^{c\vee}\omega_\pi^{-n}\otimes\Pi\cong
\psi\otimes\Pi$. Since $\Sym^n\pi$ is Iwahori spherical it
follows that $\Pi$ is Iwahori spherical. Moreover, if $v$ is a
ramified place of $E/F$, $\pi_v$ is special by assumption
and thus $\Pi_v$ is the base change from $U$ of a square integrable representation. Finally, at every finite
place $\Pi_v$ is either unramified or special. Therefore, the
hypotheses of Conjecture \ref{t:GL(n) to U_n} are satisfied
and the conclusion follows.

It remains to show that $\Pi\cong\Sym^n\BC_{E/F}\pi$ is cuspidal
automorphic. Writing $\pi_E=\BC_{E/F}\pi$ we note that $\pi_E$
is cuspidal since $\pi$ is not CM. Therefore one may apply
\cite[Theorem A']{ramakrishnan:remarks-sym} to study the
cuspidality of $\Sym^n\pi_E$. If $\pi_E$ is dihedral then
$\pi_E\cong\Ind_{L}^E\eta$ for some quadratic extension $L/E$
contradicting the fact that $\rho_{\pi,p}$ is Lie irreducible.

If $\pi_E$ is tetrahedral then $\Sym^3\pi$ is cuspidal but
$\BC_E\Sym^3\pi$ is not which implies that
$\Sym^3\pi\cong\Ind_E^F\tau$ for an automorphic representation
$\tau$ of $\GL(2, \AA_F)$. It suffices to show that there exists a Galois representation attached to $\tau$ since then $\Sym^3\pi$ is not Lie irreducible contradicting Lemma \ref{l:hmf lie irreducible}. Since a real place $v$ of $F$ splits completely in $E$, the $L$-parameter of $(\Ind_E^F\tau)_v$ is the direct sum of the $L$-parameters of the local components of $\tau$ at the complex places over $v$. Therefore $\tau$ is regular algebraic, necessarily cuspidal, representation of $\GL(2, \AA_E)$. Galois representations have been attached to such cuspidal representations by Harris--Lan--Taylor--Thorne, although one does not need such a general result; indeed, by twisting one may guarantee that the central character of $\tau$ is trivial in which case the Galois representation has been constructed by Mok (\cite{mok:siegel-hilbert}).

If $\pi_E$ is icosahedral, i.e., $\Sym^6\pi_E$ is not cuspidal but all lower symmetric powers are cuspidal then $\Sym^6\pi_E=\eta\boxplus\eta'$ where $\eta$ is a cuspidal representation of $\GL(3, \AA_F)$ and $\eta'$ is a cuspidal representation of $\GL(4, \AA_F)$ (cf. \cite[\S 4]{ramakrishnan:remarks-sym}). Lemmas 4.10 and 4.18 of \cite{ramakrishnan:remarks-sym} carry over to this setting and show that $\eta$ and $\eta'$ are regular, algebraic conjugate self-dual representations and thus that $\Sym^6\rho_{\pi,p}|_{G_E}$ is decomposable, which contradics Lemma \ref{l:hmf lie irreducible}.

Finally, if $\pi_E$ is octahedral then $\Sym^4\pi_E$ is not cuspidal. As in the case of $\Sym^6$ we may write $\Sym^4\pi_E=\eta\boxplus\eta'$ where $\eta$ is a regular algebraic cuspidal conjugate self-dual representation of $\GL(2, \AA_E)$ and $\eta'$ is a regular algebraic cuspidal conjugate self-dual representation of $\GL(3, \AA_E)$ yielding a contradiction as above.

\end{proof}

\section{$p$-adic families and Galois representations}\label{sect:eigs}
As previously mentioned, we will compute explicitly the
arithmetic $\mathcal{L}$-invariants attached to $V_{2n}$ using
triangulations of $(\varphi,\Gamma)$-modules attached to
analytic Galois representations on eigenvarieties. Which
eigenvariety we choose will be dictated by the requirement that
the formula from Lemma \ref{l:rank 1 L invariant} needs to
make sense. This translates into a lower bound for the rank of
the reductive group whose eigenvariety we will use. In this
section we make explicit the eigenvarieties under consideration
and the analytic Hecke eigenvalues whose derivatives will
control the arithmetic $\mathcal{L}$-invariant.

\subsection{Urban's eigenvarieties}
We recall Urban's construction of eigenvarieties from
\cite[Theorem 5.4.4]{urban:eigenvarieties}, to which we refer
for details. Let $G$ be a split reductive group over $\QQ$ such
that $G(\RR)$ has discrete series representations. (This
condition is satisfied by the restriction to $\QQ$ of
$\GL(2)$ and $\GSp(2n)$ over totally real fields and compact
unitary groups attached to CM extensions.) We denote by $T$ a maximal (split) torus, by $B$ a Borel subgroup containing $T$ and by $B^-$ the opposite Borel, obtained as $B^-=w_BBw_B^{-1}$ for some $w_B\in W_{G,T}=N_G(T)/T$. For a character $\lambda\in X^\bullet(T)$ let $V_\lambda=\Ind_{B^-}^G \lambda$ induced from the opposite Borel, the irreducible algebraic representation of $G$ of highest weight $\lambda$ with respect to the set of positive roots defined by the Borel $B$. Its dual is $V_\lambda^\vee\cong \Ind_B^G \left((-w_B)\lambda\right)$.

Let $\pi$ be a
cuspidal automorphic representation of
$G(\AA_{\QQ})$ and assume that it has regular cohomological weight $\lambda\in X^\bullet(T)$, i.e.\ that $\pi$ can be realized in the cohomology of the local system $V_\lambda^\vee$; equivalently $\pi_\infty$ has central character equal to the central character of $\lambda$ and has a twist which is the discrete series representation of $G(\RR)$ of Harish-Chandra parameter $\lambda$ (cf. \cite[\S 1.3.4]{urban:eigenvarieties}). For example, if $f$ is a classical modular form of weight $k\geq 2$ (as before $w=2-k$) then the cohomological weight of the associated automorphic representation is $(k-2, 0)\in X^\bullet(\mathbb{G}_m^2)$ as the Eichler--Shimura isomorphism implies that $f$ appears in the cohomology of $(\Sym^{k-2})^\vee$.

Let $\AA_{\QQ,f}^{(p)}$ be the finite adeles away from $p$ and let $K^p\subset G(\AA_{\QQ,f}^{(p)})$ be a
compact open subgroup. Let $I_m'$ be a pro-$p$-Iwahori subgroup of $G(\Qp)$ such that
$\pi_f^{K^pI_m'}\neq 0$. We denote by $\mathcal{H}^p$ the Hecke
algebra of $K^p$, $\mathcal{U}_p$ the Hecke algebra of the Iwahori subgroup $I_m=\{g\in G(\Zp):g\mod{p}\in B(\Zp)\}$ and
$\mathcal{H}=\mathcal{H}^p\otimes \mathcal{U}_p$. A
$p$\textit{-stabilization} $\nu$ of $\pi$ is an irreducible constituent of $\pi_f^{K^pI_m'}\otimes \varepsilon^{-1}$ as a $\mathcal{H}$-representation, where $\varepsilon$ is a character of $I_m/I_m'\cong T(\ZZ/p^m \ZZ)$ acting on $\pi_f^{K^pI_m'}$ (cf. \cite[p. 1689]{urban:eigenvarieties}). For $\nu$ finite slope of weight $\lambda$, Urban
rescales  by
$|\lambda(t)|_p^{-1}$ the eigenvalue of the Hecke operator $U_t=I_mtI_m\in
\mathcal{U}_p$, where $t\in T(\QQ_p)$ is such that $|\alpha(t)|_p\leq1$ for all positive roots $\alpha$ (\textit{finite slope} means the $U_t$ act invertibly; cf. \cite[p.~1689--1690]{urban:eigenvarieties}). This rescaled eigenvalue is denoted
$\theta(U_t)$. The $p$-stabilization $\nu$ of $\pi$ is said to have
\textit{non-critical slope} if there is a Hecke operator $U_t$, with $|\alpha(t)|_p<1$ for all positive roots $\alpha$, such that for every simple root $\alpha$
\[v_p(\theta(U_t))<(\lambda(\alpha^\vee)+1)v_p(\alpha(t)).\]

\begin{theorem}[{\cite[Theorem 5.4.4]{urban:eigenvarieties}}]\label{t:urban}
Suppose $\nu$ has non-critical slope. Then there exists a rigid analytic ``weight space'' $\mathcal{W}$ of the weight $\lambda$ of $\pi$, a rigid analytic variety $\mathcal{E}$, a generically finite flat morphism $w:\mathcal{E}\to \mathcal{W}$ and a homomorphism $\theta:\mathcal{H}\to \mathcal{O}_{\mathcal{E}}$ such that:
\begin{enumerate}
\item there exists a dense set of points $\Sigma\subset \mathcal{E}$ and 
\item for every point $z\in \Sigma$ there exists an irreducible
cuspidal representation $\pi_z$ of $G(\AA_{\QQ})$ of
weight $w(z)$, with Iwahori spherical local representations at places over $p$, and a $p$-stabilization $\nu_z$ such that $\theta(T)$ is the normalized eigenvalue of $T\in \mathcal{H}$ on $\nu_{z}$ and 
\item there exists $z_0\in \Sigma$ such that $\pi_{z_0}=\pi$ and $\nu_{z_0}=\nu$.
\end{enumerate}

\end{theorem}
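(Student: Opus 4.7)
The plan is to follow the overconvergent cohomology approach, adapting the Coleman--Mazur and Buzzard eigenvariety machines to the non-compact setting. First, for each affinoid $\Omega\subset\mathcal{W}$ I would introduce a space $\mathcal{D}_\Omega$ of locally analytic distributions on the big cell of $G(\Zp)$ with values in $\mathcal{O}(\Omega)$, interpolating the duals $V_\lambda^\vee$ of the algebraic representations as $\lambda$ ranges over classical weights in $\Omega$. The group cohomology $H^\bullet(K^pI_m',\mathcal{D}_\Omega)$ then carries a natural action of the Hecke algebra $\mathcal{H}$, and the operators $U_t$ with $|\alpha(t)|_p<1$ for all positive roots act as compact endomorphisms on (analytic avatars of) these coherent $\mathcal{O}(\Omega)$-modules.

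Next, I would package these cohomology groups into the Euler--Poincar\'e virtual $\mathcal{H}$-module $\chi_\Omega=\sum_i(-1)^i H^i(K^pI_m',\mathcal{D}_\Omega)$. The main obstacle is that for non-compact $G$ the individual cohomology groups pick up Eisenstein contributions which must be separated from the genuinely cuspidal ones; to overcome this I would work with the Fredholm determinant of $U_t$ acting on $\chi_\Omega$ and apply Coleman--Riesz theory slopewise to cut out, for each slope bound $h$, a finite flat $\mathcal{O}(\Omega)$-virtual submodule on which $U_t$ has slope $\leq h$. Gluing over $\Omega$ and $h$ produces a Fredholm hypersurface $\mathcal{Z}\subset\mathcal{W}\times\mathbb{A}^1$, after which the Buzzard eigenvariety machine applied to the full commutative Hecke algebra $\mathcal{H}$ yields a rigid analytic space $\mathcal{E}\to\mathcal{Z}\to\mathcal{W}$ equipped with the system of Hecke eigenvalues $\theta:\mathcal{H}\to\mathcal{O}_\mathcal{E}$, finite flat over $\mathcal{W}$ generically.

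The third step is classicality and density. For a classical weight $\lambda'\in\Omega$, the non-critical slope condition gives, via a BGG-type comparison between $\mathcal{D}_{\lambda'}$ and $V_{\lambda'}^\vee$, an isomorphism between the slope-$\leq h$ parts of overconvergent and classical cohomology in weight $\lambda'$ whenever $v_p(\theta(U_t))<(\lambda'(\alpha^\vee)+1)v_p(\alpha(t))$ holds for all simple roots $\alpha$. Combining this with Franke's description of classical cohomology and the Euler--Poincar\'e calculation---which isolates the cuspidal contribution because Eisenstein eigensystems contribute to boundary cohomology in a controlled way---attaches to each classical non-critical point $z\in\mathcal{E}$ a genuine cuspidal automorphic representation $\pi_z$, Iwahori spherical at $p$, together with a $p$-stabilization $\nu_z$ whose normalized Hecke eigenvalues equal $\theta(T)(z)$. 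Density of $\Sigma$ then follows because regular dominant weights accumulate in $\mathcal{W}$ and $\mathcal{E}\to\mathcal{W}$ is finite flat on the non-critical locus.

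Finally, to exhibit the distinguished point $z_0$, I note that the Hecke eigensystem of the given non-critical $p$-stabilization $\nu$ of $\pi$ already appears in classical cohomology in weight $\lambda$, hence by the classicality statement above also in overconvergent cohomology over any small $\Omega\ni\lambda$; by construction $\mathcal{E}$ then contains a point $z_0$ over $\lambda\in\mathcal{W}$ realizing this eigensystem, and strong multiplicity one forces $\pi_{z_0}=\pi$ and $\nu_{z_0}=\nu$.
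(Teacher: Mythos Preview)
The paper does not give its own proof of this statement; Theorem~\ref{t:urban} is simply quoted from \cite[Theorem 5.4.4]{urban:eigenvarieties} and used as a black box. Your sketch is a fair high-level outline of Urban's actual argument in that reference: the locally analytic distribution modules $\mathcal{D}_\Omega$, compactness of the $U_t$, the Euler--Poincar\'e construction (needed precisely to handle the non-compactness of $G$ and separate Eisenstein from cuspidal contributions), the Fredholm/Buzzard machine, and the non-critical slope classicality theorem are exactly the ingredients Urban uses.

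One small caution: in your final paragraph you invoke strong multiplicity one to force $\pi_{z_0}=\pi$. Strong multiplicity one is not available for a general reductive $G$ (and in particular not for $\GSp(2n)$ with $n\geq 2$). What Urban's construction literally produces at $z_0$ is the Hecke eigensystem of $(\pi,\nu)$; the identification ``$\pi_{z_0}=\pi$'' in the statement should be read at the level of finite-slope Hecke eigensystems rather than as an identification of automorphic representations via multiplicity one. This is harmless for the applications in the present paper, which only use the eigensystem and the attached Galois representation.
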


Suppose $F$ is a totally real field in which $p$ splits completely and $G/F$ is a split
reductive group over $F$. Let $G'=\Res_{F/\QQ}G$ be the
Weil restriction of scalars. Then a cuspidal automorphic
representation $\pi$ of $G(\AA_F)$ can be thought of as a
cuspidal representation of $G'(\AA_{\QQ})$ and we
may apply Urban's construction above. Let $T$ be a maximal torus
of $G$ and $T'=\Res_{F/\QQ}T$. Then $X^\bullet(T')\cong
X^\bullet(T)\cong\oplus_{G_{F/\QQ}}X^\bullet(T_{/\QQ_p})$
gives a one-to-one correspondence between the weights of
automorphic representations of $G(\AA_F)$ and tuples
$(w_1,\ldots)$ where $w_i\in X^\bullet(T_{/\QQ_p})$. For
each place $v\mid p$, given $t\in T(\QQ_p)$ one gets a
Hecke operator $U_t$ acting on $\pi_v$ and the refinement
$\nu_v$ with eigenvalue $\alpha_v$. Then the eigenvalue of $U_t$
on the refinement $\nu$ of $\pi$ thought of as a representation
of $G'(\AA_{\QQ})$ is $\prod_v \alpha_v$ and the
slope of the renormalized eigenvalue is
\[v_p(\theta(U_t))=\sum_v v_p(\lambda_v(t)) + \sum_v v_p(\alpha_v)=\sum_v v_p(\theta_v(U_t))\]
where $\theta_v(U_t)$ is the eigenvalue of $U_t$ on $\pi_v$ renormalized by the weight $\lambda_v$. If $\Phi$, $\Phi^+$ and $\Psi$ are the set of roots, positive roots, and simple roots of $G_{/\QQ_p}$ with respect to a Borel, then the set of roots, positive roots, and simple roots of $G$ are $\prod_i \Phi$, $\prod_i \Phi^+$, and $\{0\oplus\cdots\oplus \alpha\oplus\cdots\oplus 0|\alpha\in \Psi\}$.

In the next sections, we will make explicit first what
non-critical slope means in the case of the groups under
consideration, and second, the relationship between the
renormalized action of Hecke operators on $p$-stabilizations and their
action on smooth representations of $p$-adic groups. The latter
will allow us to study global triangulations of Galois
representations in terms of analytic Hecke eigenvalues.

\subsection{Eigenvarieties for Hilbert modular forms}
Suppose $\pi$ is a cohomological Hilbert modular form of infinity type
$(k_1,\ldots, k_d, w)$ over a totally real field $F$ of degree
$d$ in which $p$ splits completely. The representation $\pi$ has cohomological weight $\bigoplus_i ((-w+k_i-2)/2, (-w-k_i+2)/2)$. For each place $v\mid p$, let
$\alpha_v$ be an eigenvalue of $U_p$ (corresponding to $t=\begin{pmatrix} 1&\\ &p^{-1}\end{pmatrix}$) acting on a refinement $\nu$ of
$\pi$ in which case $\theta(U_p)=p^{(w+k_i-2)/2}\alpha_v$. Then
$v_p(\theta(U_p))=\sum_i ((w+k_i-2)/2+v_p(\alpha_v))$. The simple roots of $\GL(2)_{/F}$ are of the form $(1,-1)_i$ where $i$ is an infinite place. As $((-w+k-2)/2,(-w-k+2)/2)\cdot(1,-1) = k-2$, the refinement $\nu$ has
noncritical slope if and only if
\[\sum_i ((w+k_i-2)/2+v_p(\alpha_v))<k_i-1\]
for every $i$, which is equivalent to
\[\sum_i ((w+k_i-2)/2+v_p(\alpha_v))<\min(k_i)-1\]
We remark that for modular forms of weight $k$ (with $w=2-k$) this is the usual definition of noncritical slope.

\begin{lemma}\label{l:hmf eigenvariety}
Let $\pi$ be a cohomological Hilbert modular form, $\nu$ a refinement of $\pi$ of noncritical slope as above, and
$\mathcal{E}\to \mathcal{W}$ Urban's eigenvariety around
$(\pi,\nu)$. Shrinking $\mathcal{W}$ if necessary, there exists an analytic
Galois representation $\rho_{\mathcal{E}}:G_F\to \GL(2,
\mathcal{O}_{\mathcal{E}})$ such that for $z\in \Sigma$, $z\circ \rho_{\mathcal{E}}=\rho_{\pi_z}$. If $\pi$ has Iwahori level at $v\mid p$ then $\rho_{\mathcal{E}}$ admits a refinement in the sense of \S \ref{sect:refined galois}.
\end{lemma}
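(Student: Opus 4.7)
The plan has two parts: first construct $\rho_{\mathcal{E}}$ by $p$-adic interpolation from the classical Galois representations attached to points $z\in\Sigma$ supplied by Theorem \ref{t:urban}; second, verify that for each $v\mid p$ the restriction $\rho_{\mathcal{E}}|_{G_{F_v}}$ fits the refined family template of \S\ref{sect:refined galois}.

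For the interpolation step, to each $z\in\Sigma$ I associate the two-dimensional Galois representation $\rho_{\pi_z,p}:G_F\to\GL(2,\overline{\QQ}_p)$ of \S\ref{sect:hmf def}. At a place $\ell$ away from the level and $p$, $\Tr\rho_{\pi_z,p}(\Frob_\ell)$ is the value at $z$ of a Hecke section in $\mathcal{O}_{\mathcal{E}}$ provided by $\theta$. Since $\Sigma$ is Zariski-dense, these sections package, via Chenevier's formalism of continuous two-dimensional determinants, into a determinant $G_F\to\mathcal{O}(\mathcal{E})$ whose specialization at every $z\in\Sigma$ recovers $\rho_{\pi_z,p}$. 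By Lemma \ref{l:hmf lie irreducible}, $\rho_{\pi,p}$ is absolutely irreducible, so a Nyssen--Rouquier-type lifting produces, after shrinking $\mathcal{W}$ around the image of $(\pi,\nu)$, an honest representation $\rho_{\mathcal{E}}:G_F\to\GL(2,\mathcal{O}_{\mathcal{E}})$ realizing this determinant.

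For the refined structure at $v\mid p$, note $F_v\cong\QQ_p$ since $p$ splits completely. The Hodge--Tate weights $(w-k_v)/2$ and $(w+k_v-2)/2$ depend affine-linearly on the weight coordinates of $\mathcal{W}$, hence interpolate to $\kappa_{v,1},\kappa_{v,2}\in\mathcal{O}(\mathcal{E})$. The universal character on the torus-part of $\mathcal{W}$ delivers characters $\chi_{v,i}:\Zp^\times\to\mathcal{O}(\mathcal{E})^\times$ satisfying axiom (6). I take $F_{v,1}$ to be a suitable normalization of the analytic Hecke eigenvalue $a_v$ attached to $[\Iw\diag(p,1)\Iw]$, and $F_{v,2}$ to be its Vieta-complement determined by the central character; the refinement $\nu$ pins down the ordering so as to match the basis $e_1,e_2$ of $\D_{\st}(\rho_{\pi,p}|_{G_{F_v}})$ chosen in \S\ref{sect:regular submodules}. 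For the dense set $Z\subset\Sigma$ required in the definition, I take the locus of classical points at which $\pi_{z,v}$ is an unramified principal series with distinct Satake parameters at every $v\mid p$; density is standard because the Steinberg locus and the equal-Satake-parameter locus are cut out by the vanishing of explicit holomorphic functions on $\mathcal{E}$. Local-global compatibility at $p$ (Saito, Skinner) then supplies axioms (1)--(4), and axiom (5) is the standard accumulation property of classical points in eigenvarieties.

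The main technical ingredient is the interpolation of Hodge--Tate weights into $\mathcal{O}(\mathcal{E})$, for which I would invoke Sen theory in families (Sen, Berger--Colmez, Kisin); once granted, the remaining verifications are routine. A subtler point is the compatibility between the basis $e_1,e_2$ and the ordering of $F_{v,1},F_{v,2}$, which is handled by using $\nu$ to single out the $\varphi$-eigenline corresponding to $e_1$. Pairwise distinctness of $p^{\kappa_{v,i}(z)}F_{v,i}(z)$ at $z\in Z$ follows from the non-criticality of the slope of $\nu$ together with the Satake-distinctness condition built into the definition of $Z$, so no further input is required.
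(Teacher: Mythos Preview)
Your proposal is correct and follows essentially the same two-step strategy as the paper: interpolate Frobenius traces into a pseudorepresentation (the paper) or determinant (you), lift to a genuine representation using absolute irreducibility of $\rho_{\pi,p}$ at the central point, then read off the refined-family data $(\kappa_{v,i},F_{v,i})$ from the weight coordinates and the analytic $U_p$-eigenvalue $a_v$, with density of the crystalline locus $Z$ coming from the fact that the Steinberg and equal-eigenvalue conditions cut out proper analytic subsets. One small caution: you invoke Lemma~\ref{l:hmf lie irreducible} for irreducibility, but that lemma carries a non-CM hypothesis which the statement of Lemma~\ref{l:hmf eigenvariety} does not assume; the paper instead cites \cite{skinner:hilbert} directly, which gives irreducibility of $\rho_{\pi,p}$ without any CM restriction.
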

\begin{proof}
The existence of $\rho_{\mathcal{E}}$ is done by a standard
argument, but we reproduce it here for convenience. If $v\nmid
p$ is a place such that $\pi_w$ is unramified and $z\in \Sigma$
then $z(\theta(T_v))$ is equal to the eigenvalue of $T_v$ on
$\pi_z$. Local-global compatibility implies that
$\Tr\rho_{\pi_z}(\Frob_v)=z\circ\theta(T_v)$ and so
$\theta(T_v)$ is an analytic function $T(\Frob_v)$ specializing
at points $z\in\Sigma$ to $\Tr\rho_{\pi_z}(\Frob_v)$. As
$\{\Frob_v|v\nmid Np\}$ is dense in $G_F$ we may define by
continuity $T(g) = \lim T(\Frob_v)$ where $\Frob_v\to g$. The
function $T:G_F\to \mathcal{O}_{\mathcal{E}}$ is an analytic
two-dimensional pseudorepresentation. Since $\pi$ is cuspidal,
$\rho_\pi$ is irreducible
(cf. \cite[p. 256]{skinner:hilbert}). This implies that
shrinking $\mathcal{W}$, $T$ is the trace of an analytic Galois
representation $\rho_{\mathcal{E}}$
(cf. \cite[4.2.6]{jorza:thesis}).

If $\pi$ has Iwahori level at $v\mid p$ then $\pi_z$ will also have Iwahori level at $v\mid p$ for $z\in \Sigma$. Thus $\rho_{z,v}$ for $v\mid p$ is either unramified or Steinberg. If Steinberg then the unique refinement of $\pi_{z,v}$ has slope $k_v-2+w/2$. Shrinking the neighborhood $\mathcal{W}$ we may guarantee that the slope is constant. The classical points on $\mathcal{E}$ are dense and the ones satisfying $k_v-2+w/2$ constant lie in a subvariety and so there is a dense set $\Sigma$ of points on $\mathcal{E}$ corresponding to cohomological regular Hilbert modular forms.

Suppose $z\in \Sigma$ in which case $\rho_{\pi_z,p}|_{G_{F_v}}$ is crystalline for $v\mid p$. By local-global compatibility the eigenvalues of $\varphi_{\cris}$ acting on $\D_{\cris}(\rho_{\pi_z,p}|_{G_{F_v}}$ are $\alpha_vp^{-1/2}$ and $\beta_vp^{-1/2}$ ($F_v\cong \QQ_p$ so $\varpi_v$ can be chosen to be $p$) and by choice of central character for $\pi$ we deduce that $\alpha_v \beta_v=p^{-w}$. Writing $a_v(z) = z(\theta(U_p))$ we get an analytic function on $\mathcal{E}$ such that for $z\in \Sigma$, the eigenvalues of $\varphi_{\cris}$ are $\alpha_v p^{-1/2} = a_v(z) p^{(-w-k_v+1)/2}$ and $\beta_v p^{-1/2}=p^{-w-1/2}\alpha_v^{-1} = a_v(z)^{-1} p^{(-w+k_v-3)/2}$. Let $\kappa_{v,1}(z) = (w-k_v)/2$ and $\kappa_{v,2}(z) = (w+k_v-2)/2$; let $F_1(z) = a_v(z)^{-1}p^{3/2}$ and $F_2(z) = a_v(z)p^{1/2}$. Then for $z\in \Sigma$, $\kappa_{1, v}(z)<\kappa_{2,v}(z)$ are the Hodge--Tate weights of the crystalline representation $\rho_{\pi_z,p}|_{G_{F_v}}$ and the eigenvalues of $\varphi_{\cris}$ are $F_1(z)p^{\kappa_1(z)}$ and $F_2(z)p^{\kappa_2(z)}$ which implies that $\rho_{\mathcal{E}}|_{G_{F_v}}$ admits a refinement as desired. 
\end{proof}

\begin{corollary}\label{c:hmf triangulation}
Let $\pi$ be as in Lemma \ref{l:hmf eigenvariety}. For $v\mid
p$, assume that $\pi_v$ is Iwahori spherical; if $\pi_v$ is an
unramified principal series assume that $\alpha_v\neq\beta_v$. Then there exists a global triangulation
\[\mathscr{D}_{\rig}^\dagger(\rho_{\mathcal{E},
  p}|_{G_{F_v}})\sim \begin{pmatrix} \delta_1&*\\
&\delta_2\end{pmatrix}\]
with $\delta_1(p) = a_v(z)^{-1}p^{3/2}$, $\delta_1(u)=u^{(w-k_v)/2}$, $\delta_2(p)=a_v(z)p^{-1/2}$ and $\delta_2(u) = u^{(w+k_v-2)/2}$.
\end{corollary}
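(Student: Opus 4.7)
The plan is to apply Liu's theorem on triangulations of refined families (Theorem \ref{t:triangulation}) to the family $\rho_{\mathcal{E}}|_{G_{F_v}}$. Lemma \ref{l:hmf eigenvariety} already presents this restriction as a refined family, with all of the refinement data $(\kappa_{v,i}, F_i, \chi_i)$ made explicit in terms of the analytic Hecke eigenvalue $a_v$ and the infinity type of $\pi$ at $v$. So the core of the argument is to verify the two pointwise hypotheses of Theorem \ref{t:triangulation} (regularity and noncriticality) at some crystalline classical point $z_0 \in \Sigma$ near the point of $\mathcal{E}$ corresponding to $\pi$, and then read off the graded characters of the resulting triangulation from the refinement.

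First I would choose $z_0 \in \Sigma$ close to $\pi$. Such a $z_0$ exists even if $\pi_v$ is Steinberg, because the equation $v_p(a_v(z)) = k_v - 2 + w/2$ singling out the Steinberg slope cuts out a proper Zariski-closed subvariety of $\mathcal{E}$, so nearby classical points are generically unramified principal series and hence crystalline. For a rank 2 representation, regularity at $z_0$ amounts to the distinctness of the two crystalline Frobenius eigenvalues $\alpha_v p^{-1/2}$ and $\beta_v p^{-1/2}$, which is our standing hypothesis and is an open condition. Noncriticality at $z_0$ asks that $\D_{\cris}(V_{z_0}) = \mathcal{F}_1 + \Fil^{\kappa_{v,2}(z_0)}\D_{\cris}(V_{z_0})$; since both summands are lines, this is just the statement that the Frobenius-stable line $\mathcal{F}_1$ does not coincide with the nontrivial step of the Hodge filtration. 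By the explicit description in Section \ref{sect:regular submodules}, the Hodge line in the unramified principal series case is $\langle e_1 + e_2\rangle$ in the $\varphi$-eigenbasis, which avoids both $\langle e_1\rangle$ and $\langle e_2\rangle$, and noncriticality follows. Both conditions being open, they hold throughout an affinoid neighborhood $U \ni z_0$.

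Theorem \ref{t:triangulation} then produces a triangulation of $\Drig(\rho_{\mathcal{E}}|_{G_{F_v}})|_U$ with graded pieces $\mathcal{R}_U(\delta_1), \mathcal{R}_U(\delta_2)$, where $\delta_i(p) = F_i$ and $\delta_i|_{\ZZ_p^\times} = \chi_i$ by definition of the $\delta_i$ from the refinement. Substituting the formulas from Lemma \ref{l:hmf eigenvariety} gives the claimed expressions for $\delta_1$ and $\delta_2$. The one delicate point—and in my view the main obstacle—is ensuring that the neighborhood of validity actually contains the point corresponding to $\pi$ itself, rather than merely $z_0$; this matters above all in the Steinberg case where $\pi \notin \Sigma$. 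This is handled either by first fixing a connected neighborhood of $\pi$ in $\mathcal{E}$ and choosing $z_0$ inside it (so that shrinking $\mathcal{E}$ to lie in $U$ retains $\pi$), or by appealing to the stronger Theorem 5.3.1 of Liu mentioned in the remark following Theorem \ref{t:triangulation}, which supplies a triangulation at every point after passing to a proper birational transformation of $\mathcal{E}$.
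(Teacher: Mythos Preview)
Your approach is essentially the same as the paper's: apply Theorem \ref{t:triangulation} after verifying regularity (distinct Frobenius eigenvalues) and noncriticality at the relevant classical point. The paper handles noncriticality more tersely by citing \cite[\S 5.2]{liu:triangulation} and the increasing ordering of the Hodge--Tate weights rather than your explicit check that the $\varphi$-eigenline avoids the Hodge line, and it does not separately address the Steinberg subtlety you flag.
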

\begin{proof}
We only need to check that the refinement attached to the triangulation of $\rho_{\pi,p}|_{G_{F_v}}$ is noncritical and regular by Theorem \ref{t:triangulation}. Noncriticality is immediate from the requirement that the Hodge--Tate weights be ordered increasingly (cf. \cite[\S 5.2]{liu:triangulation}). Regularity is equivalent to the fact that $\phi_{\cris}$ has distinct eigenvalues, i.e.\ that $\alpha_v\neq \beta_v$.
\end{proof}

\subsection{Eigenvarieties for  $\GSp(2n)$}\label{sect:eigenvarieties for symplectic}
Suppose $F/\QQ$ is a number field in which $p$ splits
completely, $g\geq 2$ an integer, and $\pi$ a cuspidal
automorphic representation of $\GSp(2g, \AA_F)$ such that
for $v\mid\infty$ (writing $v\mid p$ for the associated place
above $p$) the representation $\pi_v$ is the holomorphic
discrete series representation with Langlands parameter, under the $2^g$-dimensional spin representation $\GSpin(2g+1, \CC)\into \GL(2^g,\CC)$, uniquely identified by
\[z\mapsto |z|^{\mu_0}\diag((z/\overline{z})^{1/2\sum \varepsilon(i)(\mu_{v,i}+g+1-i)})\]
where $\varepsilon:\{1,2,\ldots,g\}\to \{-1,1\}$. If $V_\mu$ is
the algebraic representation of $\GSp(2g)$ with highest weight
$\mu_v=(\mu_{v,1},\ldots, \mu_{v,g}; \mu_0)$ then $H^\bullet(\mathfrak{gsp}_{2g}, \SU(g), \pi_v\otimes V_\mu^\vee)\neq 0$ and so $\pi_v$ has cohomological weight $\mu$ in the sense of Urban.

 We denote by $\Iw$ the Iwahori subgroup of $\GSp(2g)$ of matrices which are upper triangular mod $p$. Let $\beta_0 = t(1,\ldots,1;p^{-1})$ and $\beta_j=t(1,\ldots,1,p^{-1},\ldots,p^{-1};p^{-2})$ where $p^{-1}$ appears $j$ times. The
double coset $[\Iw \beta_i\Iw]$ acts on the finite dimensional
vector space $\pi_v^{\Iw}$ for $v\mid p$.

\begin{lemma}\label{l:hecke local gsp}Let $\chi=\chi_1\otimes\cdots\otimes\chi_g\otimes\sigma$ be an unramified character of the diagonal torus of $\GSp(2g, \QQ_p)$ and assume that the unitary induction $\mu=\Ind_B^{\GSp(2g, \QQ_p)}\chi$ is irreducible. Then $\mu^{\Iw}$ is $2^g g!$ dimensional and has a basis in which the operator $U_t=[\Iw t\Iw]$ for any $t=(p^{a_1}, \ldots, p^{a_g}; p^{a_0})$ is upper triangular. The basis is $\{e_w\}$ indexed by the Weyl group elements $w=(\nu,\varepsilon)\in W$ in the Bruhat ordering. The diagonal element of $U_t$ corresponding to $e_w$ is
\[p^{g(g+1)/4a_0-\sum (n+1-j)a_j}\sigma(p)^{a_0}\prod_{j=1}^g \chi_j(p)^{a_{\nu(j)}\textrm{ or }a_0-a_{\nu(j)}}\]
where the exponent depends on whether $\varepsilon(\nu(j))=1$ or $-1$.

In particular, the Hecke operators $U_{p,i}=[\Iw \beta_{g-i} \Iw]$ are upper
triangular whose diagonal elements are
\[\left\{\displaystyle p^{c_{i,\nu,\varepsilon}}\sigma^{-2}(p)\prod_{\nu(j)>i}\chi_j^{-1}(p)\prod_{\nu(j)\leq i,\varepsilon(j)=-1}\chi_j^{-2}(p)\right\}\text{ for }
1\leq i\leq g-1
\]
and
\[\left\{\displaystyle
p^{c_{g,\nu,\varepsilon}}\sigma^{-1}(p)\prod_{\varepsilon(j)=-1}\chi_{j}^{-1}(p)\right\}\text{ for }i=g.
\]
Here
\[c_{i,\nu,\varepsilon}=\sum_{\nu(j)>i}(n+1-j)+2\sum_{\nu(j)\leq
  i,\varepsilon(j)=-1}(n+1-j)-g(g+1)/2\]
and
\[
c_{g,\nu,\varepsilon}=\sum_{\varepsilon(j)=-1}(n+1-j)-g(g+1)/4.
\]
\end{lemma}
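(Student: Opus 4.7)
The plan is to reduce to Casselman's classical computation of the action of the Iwahori--Hecke algebra on an unramified principal series, and then to unwind the resulting formula.

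First, the Iwahori--Bruhat decomposition $\GSp(2g,\Qp)=\bigsqcup_{w\in W} B\dot w\Iw$ (where $\dot w$ is any lift of $w$ to $N_G(T)$) shows that the unique Iwahori-fixed section $e_w$ of $\mu=\Ind_B^G\chi$ that is supported on $B\dot w\Iw$ and normalized by $e_w(\dot w)=1$ furnishes a basis $\{e_w\}_{w\in W}$ of $\mu^{\Iw}$. Since $|W|=2^g g!$, this already yields the asserted dimension.

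Second, for $t$ in the contracting cone $\{t\in T(\Qp):|\alpha(t)|_p\leq 1\ \forall\alpha\in\Phi^+\}$, Casselman's calculation shows that $U_t=[\Iw t\Iw]$ is upper triangular with respect to the Bruhat order on $\{e_w\}$, with diagonal entry equal to
\[\delta_B^{-1/2}(t)\cdot\chi(\dot w\,t\,\dot w^{-1}).\]
The triangularity reflects the containment $\Iw\, t\,\Iw\cdot B\dot w\Iw\subseteq \bigsqcup_{w'\leq w}B\dot{w'}\Iw$, while the diagonal term comes from the single coset where equality holds. The positive roots of $\GSp(2g)$ being $\{e_i-e_j,\,e_i+e_j\,:\,i<j\}\cup\{2e_i\}$, summing their valuations at $t(p^{a_1},\ldots,p^{a_g};p^{a_0})$ gives
\[\delta_B^{-1/2}\bigl(t(p^{a_1},\ldots,p^{a_g};p^{a_0})\bigr)=p^{g(g+1)a_0/4\,-\,\sum_{j=1}^g(g+1-j)a_j},\]
and the Weyl action on $T$ described at the start of this section yields $\dot w t\dot w^{-1}=t(y_1,\ldots,y_g;p^{a_0})$ with $y_j=p^{a_{\nu(j)}}$ if $\varepsilon(\nu(j))=1$ and $y_j=p^{a_0-a_{\nu(j)}}$ otherwise. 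Substituting into $\chi=\chi_1\otimes\cdots\otimes\chi_g\otimes\sigma$ produces the first displayed formula of the lemma.

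Third, to obtain the explicit formulas for the $U_{p,i}$, I specialize to $t=\beta_{g-i}$ for $1\leq i\leq g-1$ (so $a_1=\cdots=a_i=0$, $a_{i+1}=\cdots=a_g=-1$, $a_0=-2$) and to $t=\beta_0$ (so $a_0=-1$ and all other $a_j=0$), and regroup the resulting product according to whether $\nu(j)\leq i$ or $\nu(j)>i$. The key observation that streamlines the bookkeeping is that when $\nu(j)>i$, \emph{both} possibilities $\varepsilon(\nu(j))=\pm 1$ produce the same contribution $\chi_j(p)^{-1}$ (indeed $a_{\nu(j)}=-1$ and $a_0-a_{\nu(j)}=-1$), which is why this part of the product is independent of the sign character. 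The remaining factor $\chi_j(p)^{-2}$ only appears in the regime $\nu(j)\leq i$ with $\varepsilon(\nu(j))=-1$ (since there $a_{\nu(j)}=0$ and $a_0-a_{\nu(j)}=-2$), yielding the shape of the lemma.

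The main obstacle is pinning down the correct normalization in the second step: the direction of the Bruhat order, the choice between $\delta_B^{\pm 1/2}$ (which depends on whether one works with unitary or un-normalized induction), and whether $\chi$ is conjugated by $\dot w$ or $\dot w^{-1}$ all depend on sign conventions that must be fixed consistently to match the stated formulas. Once this is settled, everything else is routine bookkeeping with the root datum of $\GSp(2g)$ and the Weyl group $S_g\ltimes(\ZZ/2)^g$.
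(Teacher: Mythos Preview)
Your proposal is correct and follows essentially the same route as the paper's proof: the paper invokes \cite[Proposition~3.2.1]{genestier-tilouine} (itself a parahoric variant of Casselman's computation) to obtain the basis $\{e_w\}$ and the upper-triangularity, then identifies the diagonal entry as $\delta^{1/2}(t)\chi(w^{-1}tw)$ and unwinds the root datum exactly as you do. Your explicit verification of the specialization to $\beta_{g-i}$---in particular the observation that for $\nu(j)>i$ both values of $\varepsilon(\nu(j))$ contribute the same exponent $-1$---is a useful gloss that the paper leaves implicit.
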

\begin{proof}
The proof is inspired by \cite[Proposition
3.2.1]{genestier-tilouine}. Indeed, the Iwahori decomposition
gives $\GSp(2g, \QQ_p) = \cup_{w\in W} Bw\Iw$. Writing
$e_w$ for $w\in W$ the function defined on the cell $Bw\Iw$
gives a basis for $\mu^{\Iw}$ and the fact that the Hecke action
is upper triangular with respect to the basis $\{e_w\}$ in the
Bruhat ordering is the first part of the proof of
\cite[Proposition 3.2.1]{genestier-tilouine}. The computation of
the diagonal elements only requires replacing the parahoric
subgroup by the Iwahori subgroup and therefore the quotient of
the Weyl group by the Weyl group of the parabolic with the whole
Weyl group.

Let $\widetilde{\chi}=\chi\otimes\delta^{1/2}$ where $\delta$ is
the modulus character of the upper triangular Borel subgroup,
given by $\delta(t) =|\rho(t)|^2_p$. Write $^w
\widetilde{\chi}(t)=\delta^{1/2}(t)\delta^{1/2}(w^{-1}tw)\widetilde{\chi}(w^{-1}tw)$
and $^w\chi(t)=\chi(w^{-1}tw)$. Let $S$ be the Satake
isomorphism from the Iwahori--Hecke algebra to the spherical
Hecke algebra of the maximal torus $T$. Analogously to the
second part of \cite[Proposition 3.2.1]{genestier-tilouine}, the
eigenvalue of $U_t$ on the one-dimensional subspace of
$\mu^{\Iw}$ generated by $e_w$ is equal to $^w
\widetilde{\chi}(S(t))=\delta^{1/2}(t)\chi(w^{-1}tw)$. We now
make explicit these eigenvalues. Recall that
$w^{-1}tw=(p^{a'_{\nu(1)}}, \ldots, p^{a'_{\nu(g)}}; p^{a_0})$
where $a'_i = a_i$ if $\varepsilon(i)=1$ and $a'_i=a_0-a_i$ if
$\varepsilon(i)=-1$. Moreover, $\delta^{1/2}(t) =
p^{g(g+1)/4a_0-\sum (n+1-j)a_j}$. Thus the $e_w$ eigenvalue of
$U_t$ is
\[p^{g(g+1)/4a_0-\sum (n+1-j)a_j}\sigma(p)^{a_0}\prod_{j=1}^g \chi_j(p)^{a_{\nu(j)}\textrm{ or }a_0-a_{\nu(j)}}\]
where the exponent depends on whether $\varepsilon(\nu(j))=1$ or $-1$.

We remark that in the final computation of $\mathcal{L}$-invariants the constants $c_{i,\nu,\varepsilon}$ do not matter.
\end{proof}

\begin{lemma}\label{l:siegel-hilbert noncritical slope}
Let $\pi$ be a cuspidal automorphic representation of $\GSp(2g,
\AA_F)$ of cohomological weight $\lambda_v=\bigoplus
(\mu_{v,1},\ldots, \mu_{v,g};\mu_0)$ as above. Suppose $\nu$ is
a $p$-stabilization of $\pi$ and $t\in
\Res_{F/\QQ}T(\QQ_p)$ with associated Hecke
operator $U_{v,t}=[\Iw t\Iw]$ acting on $\pi_v$. There exists an integer $m$ such that the refinement $\nu\otimes|\cdot|_{\AA_F}^m$ of $\pi\otimes|\cdot|_{\AA_F}^m$ has noncritical slope with respect to $U_t$.
\end{lemma}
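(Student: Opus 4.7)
The non-critical slope criterion of Urban (following Theorem \ref{t:urban}) demands that for some Hecke operator $U_t$ with $|\alpha(t)|_p < 1$ for every positive root $\alpha$, one has
\[v_p(\theta(U_t)) < (\lambda(\alpha^\vee) + 1)\, v_p(\alpha(t))\]
for every simple root $\alpha$ of $\Res_{F/\QQ}\GSp(2g)$. At each $v \mid p$ these simple roots are $e_i - e_{i+1}$ (for $i<g$) and $2e_g$; crucially, none of them involves the similitude coordinate $e_0$ that encodes the central character.

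The plan is to exploit this observation by twisting with the absolute-value character. Since $|\cdot|_{\AA_F}^m$ factors through the similitude $\nu$, the cohomological weight shifts only in the $\mu_0$-direction, i.e.\ $\lambda$ changes by an integer multiple of $e_0$. Therefore $\lambda(\alpha^\vee)$ is unchanged for every simple root $\alpha$, and $v_p(\alpha(t))$ depends only on $t$ and not on the representation; the entire right-hand side of the inequality is invariant under twisting.

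On the left-hand side, writing $t = t(p^{a_1},\ldots,p^{a_g};p^{a_0})$ in the local coordinates, the bare $U_t$-eigenvalue on $\pi\otimes|\cdot|^m$ differs from the one on $\pi$ by the local value of the twist character at $t$. Combining this with the renormalization factor $|\lambda(t)|_p^{-1}$ (which itself shifts because $\mu_0$ does) and using the explicit formulas of Lemma \ref{l:hecke local gsp}, one checks that $v_p(\theta(U_t))$ shifts under the twist by a nonzero integer linear function of $m$ whose coefficient involves $a_0$. Since $t$ lies in the strictly positive chamber of the symplectic Hecke algebra---for instance, $t \in \{\beta_0,\ldots,\beta_g\}$ from the preceding subsection, for which $a_0 \in \{-1,-2\}$---this coefficient is nonzero. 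Choosing $m$ of the appropriate sign and large enough absolute value therefore drives $v_p(\theta(U_t))$ below the fixed right-hand side uniformly in $\alpha$, establishing non-critical slope for $\nu\otimes|\cdot|_{\AA_F}^m$.

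The only technical step, beyond invoking Lemma \ref{l:hecke local gsp}, is the bookkeeping that determines the exact coefficient of $m$ in the shift of $v_p(\theta(U_t))$ and confirms its nonvanishing; this is a finite explicit linear calculation matching the shift of $\mu_0$ against the similitude exponent $a_0$ of $t$. I expect this to be the only substantive point, since once the coefficient is identified as a nonzero integer the conclusion is immediate.
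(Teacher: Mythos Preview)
Your proposal is correct and follows essentially the same approach as the paper: both observe that the right-hand side of Urban's inequality is independent of $\mu_0$ (since the simple roots $e_i-e_{i+1}$ and $2e_g$ do not involve $e_0$), while the left-hand side shifts linearly in $m$, and both invoke Lemma~\ref{l:hecke local gsp} to compute the shift. The paper carries out explicitly the bookkeeping you defer: twisting sends $\mu_0\mapsto\mu_0+m$, so $v_p(\lambda_v(t))$ shifts by $ma_0/2$, and since $(\chi_1\times\cdots\times\chi_g\rtimes\sigma)\otimes|\cdot|^m\cong\chi_1\times\cdots\times\chi_g\rtimes\sigma|\cdot|^m$ the bare eigenvalue shifts by $-ma_0$, giving a net shift of $-ma_0/2$ in $v_p(\theta_v(U_t))$; this is nonzero for the $\beta_j$ (where $a_0\in\{-1,-2\}$), though not always an integer as you suggested.
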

\begin{proof}
Recall Urban's convention that
$\theta(U_{v,t}) = |\lambda_v(t)|_p^{-1}a_{v,t}$ where $a_{v,t}$ is the $U_{v,t}$
eigenvalue on the refinement $\nu$. The condition that $\nu$ have
noncritical slope is that
\[\sum_{v\mid
  p}v_p(\theta(U_{v,t}))<(\lambda(\alpha^\vee)+1)v_p(\alpha(t))\]
for every simple root $\alpha$ of
$\Res_{F/\QQ}\GSp(2g)$. This is equivalent to
\[\sum_{v\mid
  p}\left(v_p(\lambda_v(t))+v_p(\alpha_{v,t})\right)<\min
\left((\mu_{v,i}-\mu_{v,i+1}+1)(a_i-a_{i+1}),
2(2\mu_{v,g}+1)a_g\right)\]
Suppose one replaces $\pi$ by $\pi_m=\pi\otimes|\cdot|^m$ for an
integer $m$. Then the cohomological weight of $\pi_m$ is
$\lambda_m'=\oplus_{v\mid p} (\mu_{v,i}; \mu_0+m)$ which implies
that $v_p(\lambda_v'(t))=v_p(\lambda_v(t))+ma_0/2$. 
$v_p(\alpha_{v,t}') = v_p(\alpha_{v,t})+m$. Lemma \ref{l:hecke local gsp} implies that $v_p(\alpha_{v,t}')=v_p(\alpha_{v,t})-ma_0$ because $(\chi_1\times\cdots\times\chi_g\rtimes\sigma)\otimes\eta\cong \chi_1\times\cdots\times\chi_g\rtimes\sigma\eta$. The result is now immediate as the right-hand side of the inequality does not change with $m$.
\end{proof}

Before discussing analytic Galois representations over Siegel
eigenvarieties we explain how to attach Galois representations to cohomological cuspidal automorphic representations of
$\GSp(2n, \AA_F)$ for a totally real $F$ using the
endoscopic classification of cuspidal representations of
symplectic groups due to Arthur, which is conditional on the
stabilization of the twisted trace formula.
\begin{theorem}\label{t:galois gsp}
Let $\pi$ be a cuspidal representation of
$\GSp(2n, \AA_F)$ of cohomological weight $\oplus (\mu_{v,1},\ldots, \mu_{v,n};\mu_0)$.
\begin{enumerate}
\item If $n=2$ there exists a spin Galois representation
$\rho_{\pi,\spin,p}:G_F\to \GSp(4, \overline{\QQ}_p)$ such that if $v\nmid\infty p$ with $\pi_v$ is unramified then $\WD(\rho_{\pi,\spin,p}|_{G_{F_v}})^{\Frss}\cong \rec_{\GSp(4)}(\pi_v\otimes|\cdot|^{-3/2})$. If $v\mid p$ and $\pi_v$ is unramified then the crystalline representation $\rho_{\pi,\spin,p}|_{G_{F_v}}$ has Hodge--Tate weights $(\mu_0-\mu_{v,1}-\mu_{v,2})/2+\{0,\mu_{v,2}+1, \mu_{v,1}+2, \mu_{v,1}+\mu_{v,2}+3\}$.
\item If $n\geq 2$ there exists a standard Galois representation $\rho_{\pi,\std, p}:G_F\to \GL(2n+1,\CC)$ such that if $v\nmid \infty p$ with $\pi_v$ the unramified principal series $\chi_1\times\cdots\times\chi_g\rtimes\sigma$ then $\rho_{\pi,\std,p}|_{G_{F_v}}$ is unramified and local-global compatibility is satisfied. If $v\mid p$ and $\pi_v$ is the unramified principal series $\chi_1\times\cdots\times\chi_g\rtimes\sigma$ then $\rho_{\pi,\std,p}|_{G_{F_v}}$ is crystalline with Hodge--Tate weights $0, \pm(\mu_{v,i}+n+1-i)$ and the eigenvalues of $\phi_{\cris}$ are $\chi_1(p),\ldots,\chi_n(p),1,\chi_1^{-1}(p),\ldots,\chi_n^{-1}(p)$.
\end{enumerate}
\end{theorem}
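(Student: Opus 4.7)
The plan is to reduce both parts to the known existence of Galois representations attached to regular algebraic essentially self-dual cuspidal automorphic representations of $\GL(N,\AA_F)$ over a totally real field. The bridge between $\GSp(2n)$ and $\GL(N)$ is provided by Arthur's endoscopic classification, which is the reason the stabilization of the twisted trace formula must be assumed.

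For part (2), the relevant $L$-embedding is the standard inclusion $\widehat{\Sp(2n)}=\SO(2n+1,\CC)\hookrightarrow \GL(2n+1,\CC)$. Arthur's classification for $\Sp(2n)$ associates to (the restriction to $\Sp(2n,\AA_F)$ of) $\pi$ a discrete global $A$-parameter, and assembling the underlying isobaric datum produces a self-dual automorphic representation $\Pi=\Pi_1\boxplus\cdots\boxplus\Pi_k$ of $\GL(2n+1,\AA_F)$, with each $\Pi_i$ a conjugate self-dual cuspidal representation whose Satake parameters at unramified places are the images of those of $\pi$ under $\SO(2n+1,\CC)\hookrightarrow\GL(2n+1,\CC)$. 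Because $\pi$ is cohomological of regular weight $\oplus_v(\mu_{v,1},\ldots,\mu_{v,n};\mu_0)$, the archimedean $L$-parameters transfer to a regular algebraic infinity-type for $\Pi$, so each $\Pi_i$ is RAECSDC. The construction of Galois representations for such automorphic representations over totally real fields, due to Harris--Lan--Taylor--Thorne (building on Shin, Clozel--Harris--Labesse, Chenevier--Harris), attaches to each $\Pi_i$ a $p$-adic Galois representation, and I set $\rho_{\pi,\std,p}=\bigoplus_i\rho_{\Pi_i,p}$. Local--global compatibility at unramified $v\nmid p$ follows from that of each $\Pi_i$; at $v\mid p$ where $\pi_v$ is an unramified principal series, each $\Pi_i|_{G_{F_v}}$ is crystalline, and the Hodge--Tate weights $0,\pm(\mu_{v,i}+n+1-i)$ together with the Frobenius eigenvalues $\chi_j(p)^{\pm 1}$ and $1$ are read off from the recipe relating the infinity-type and Satake parameters of $\Pi$ to those of $\pi$ through the standard embedding.

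For part (1), I use the exceptional isomorphism $\GSpin(5)\cong\GSp(4)$, under which the spin representation of $\widehat{\GSp(4)}$ becomes the tautological embedding $\GSp(4)\hookrightarrow\GL(4)$. Arthur's classification for $\GSp(4)$ (for generic $\pi$, this is also the content of Asgari--Shahidi) produces a functorial lift $\Pi$ of $\pi$ to $\GL(4,\AA_F)$ satisfying $\Pi^\vee\cong\Pi\otimes\omega_\pi^{-1}$. By cohomologicality $\Pi$ is regular algebraic, and under the cuspidality hypothesis $\Pi$ is itself cuspidal, so Harris--Lan--Taylor--Thorne provides $\rho_{\Pi,p}\colon G_F\to\GL(4,\overline{\QQ}_p)$. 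The essential self-duality of $\Pi$ together with the symplectic nature of its global $A$-parameter forces $\rho_{\Pi,p}$ to preserve a symplectic form up to the similitude character $\chi_{\cycl}^{-3}\omega_\pi$, yielding $\rho_{\pi,\spin,p}\colon G_F\to\GSp(4,\overline{\QQ}_p)$. Local--global compatibility at unramified $v\nmid p$ follows as in (2); the Hodge--Tate weights at $v\mid p$ come from the Harish-Chandra parameter $(\mu_{v,1},\mu_{v,2};\mu_0)$ via the recipe for the infinity-type of $\Pi$, and the uniform shift by $(\mu_0-\mu_{v,1}-\mu_{v,2})/2$ is the contribution of the central character.

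The main obstacle is the symplectic (as opposed to orthogonal) structure in part (1): one must argue that $\rho_{\pi,\spin,p}$ genuinely lands in $\GSp(4)$, which amounts to identifying the sign of the Asai/pairing attached to the essential self-duality of $\Pi$. This is determined by the type of the global $A$-parameter of $\pi$ under Arthur's classification for $\GSp(4)$: the parameter factors through $\GSp(4,\CC)=\widehat{\GSp(4)}$ (equivalently, through $\GSpin(5,\CC)$), forcing the pairing to be symplectic. A secondary point in part (2) is the possibility that $\Pi$ is isobaric rather than cuspidal; the constituents $\Pi_i$ must jointly have the expected Hodge--Tate weights, and this follows by inspecting the admissible discrete $A$-parameters allowed by Arthur's classification for $\Sp(2n)$ together with regularity of the weight of $\pi$.
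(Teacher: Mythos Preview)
Your approach is essentially the paper's: reduce to $\GL(N)$ via Arthur and then cite known Galois-representation constructions. Two points of comparison are worth making.

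For part (2), the paper restricts $\pi$ to $\Sp(2n)$ and writes the Arthur transfer to $\GL(2n+1)$ explicitly as $\bigoplus_{i}\bigoplus_{j=1}^{\ell_i}\Pi_i\otimes|\cdot|^{j-(\ell_i+1)/2}$, keeping track of the Arthur $\SL_2$ factors $\ell_i$; it then appeals to \cite{blght:calabi-yau-2} (which already suffices, since each $\Pi_i$ is genuinely \emph{self-dual} over the totally real $F$ --- your ``conjugate self-dual'' is a slip) and twists afterwards. Your isobaric decomposition $\Pi_1\boxplus\cdots\boxplus\Pi_k$ suppresses the $\ell_i$, and your invocation of Harris--Lan--Taylor--Thorne is overkill here, though it does sidestep the self-duality bookkeeping. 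You flag the non-cuspidal possibility at the end, which is the right instinct; the paper's explicit twists by $|\cdot|^{(k_i-1)/2+j-(\ell_i+1)/2}$ are exactly what makes the Hodge--Tate and Frobenius computations come out uniformly.

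For part (1), the paper simply cites \cite[Theorem 3.5]{mok:siegel-hilbert} and moves on. Your sketch of the underlying argument is reasonable, but the sentence ``under the cuspidality hypothesis $\Pi$ is itself cuspidal'' is not justified: cuspidality of $\pi$ on $\GSp(4)$ does \emph{not} force cuspidality of the $\GL(4)$-lift (Yoshida-type $\pi$ transfer to $\tau_1\boxplus\tau_2$ with $\tau_i$ cuspidal on $\GL(2)$), so a case analysis --- or a direct appeal to Mok, who handles this --- is required before you can conclude $\rho_{\pi,\spin,p}$ exists with image in $\GSp(4)$.
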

\begin{proof}
The first part follows from \cite[Theorem
3.5]{mok:siegel-hilbert}.

Let $\pi_0$ be any irreducible constituent of $\pi|_{\Sp(2n,
  \AA_F)}$. If $v\mid\infty$ then $\pi_{0,v}$ will then
be a discrete series representation with $L$-parameter uniquely
defined by
\[z\mapsto \diag((z/\overline{z})^{\mu_{v,i}+n+1-i}, 1, (z/\overline{z})^{-\mu_{v,i}-(n+1-i)})\]
cohomological weight $\oplus (\mu_{v,i})$.

Arthur's endoscopic classification implies the existence of a
transfer of $\pi_0$ from $\Sp(2n)$ to $\GL(2n+1)$ as
follows (cf. \cite[Corollary V.1.7]{scholze:torsion}). Let $\eta:{}^L\!\Sp(2n)\to{}^L\!\GL(2n+1)$ be the standard
inclusion of $\SO(2n+1,\CC)\into
\GL(2n+1,\CC)$. There exists a partition $2n+1=\sum_{i=0}^r\ell_i k_i$ and cuspidal automorphic representations $\Pi_i$ of
$\GL(k_i,\AA_F)$ such that:
\begin{enumerate}
\item $\Pi_i^\vee\cong\Pi_i$ for all $i$;
\item writing $\phi_{\pi_{0,v}}$ for the $L$-parameter of
$\pi_{0,v}$ and $\phi_{\Pi_{i,v}}$ for the $L$-parameter of
$\Pi_{i,v}$, if $v$ is archimedean or $\pi_{0,v}$ is unramified
then
\[\eta\circ
\phi_{\pi_{0,v}}=\oplus_{i=1}^r\oplus_{j=1}^{\ell_i}\phi_{\Pi_{i,v}}|\cdot|_v^{j-(\ell_i+1)/2}\]
\end{enumerate}
Next, if $\tau$ is a cuspidal automorphic representation of
$\GL(k,\AA_F)$ with cohomological weight $\oplus
(a_{v,1},\ldots, a_{v,k})$ and $\tau^\vee\cong\tau \otimes\chi$
for some Hecke character $\chi$ such that $\chi_v(-1)$ is
independent of $v$ then by \cite[Theorem 1.1]{blght:calabi-yau-2} there exists a continuous Galois
representation $\rho_\tau:G_F\to \GL(k,\overline{\QQ}_p)$
such that:
\begin{enumerate}
\item if $\tau_v$ is unramified and $v\nmid p$ then
$\WD(\rho_\tau|_{G_{F_v}})^{\Frss}\cong
\rec(\pi_v\otimes|\cdot|^{(1-k)/2})$;
\item if $\tau_v$ is unramified and $v\mid p$ then $\rho_\tau|_{G_{F_v}}$ is crystalline with Hodge--Tate weights $-a_{v,k+1-i}+k-i$ and $\WD(\rho_\tau|_{G_{F_v}})^{\Frss}\cong
\rec(\pi_v\otimes|\cdot|^{(1-k)/2})$. (The Hodge--Tate weights are
$(k-1)/2+$ the Harish-Chandra parameter.)
\end{enumerate}
The discrepancy between the description above and \cite[Theorem 1.1]{blght:calabi-yau-2} arises because we defined cohomological weight as the highest weight of the algebraic representation with the same central and infinitesimal characters as the discrete series, whereas in \cite[Theorem 1.1]{blght:calabi-yau-2} one uses the dual.

We will apply this to (a suitable twist of) $\Pi_i$ and denote $\rho_{\Pi_i}$ the resulting Galois representation. We define
\[\rho_{\pi,\std,p}=\rho_{\pi_0}=\oplus_{i=1}^r\oplus_{j=1}^{\ell_i}\rho_{\Pi_i}\otimes|\cdot|^{(k_i-1)/2 + j - (\ell_i+1)/2}\]
First, from the description of Hodge--Tate weights above in the case of $\GL(k)$ it is immediate that the Hodge--Tate weights of the crystalline representation $\rho_{\pi_0,p}|_{G_{F_v}}$ for $v\mid p$ are the entries of the Harish-Chandra parameter of $\pi_{0,v}$ for the corresponding $v\mid\infty$. Each Galois representation $\rho_{\Pi_i}$ was twisted by $|\cdot|^{(k_i-1)/2}$ so that local-global compatibility holds without twisting. As a result, the eigenvalues of $\phi_{\cris}$ on $\D_{\cris}(\rho_{\pi_0,p}|_{G_{F_v}})$ are $\chi_1(p),\ldots,\chi_n(p),1,\chi_1^{-1}(p),\ldots,\chi_n^{-1}(p)$, as desired. The statement at $v\nmid p$ with $\pi_{v}$ unramified is analogous.

\end{proof}

\begin{lemma}\label{l:gsp eigenvariety}
Let $\pi$ be a cohomological cuspidal automorphic representation of $\GSp(2g,\AA_F)$, $\nu$ a $p$-stabilization of $\pi$ of noncritical slope and
$\mathcal{E}\to \mathcal{W}$ Urban's eigenvariety around
$(\pi,\nu)$. If $\rho_{\pi,\std,p}$ is irreducible, shrinking $\mathcal{W}$, there exists an analytic
Galois representation $\rho_{\mathcal{E},\std}:G_F\to \GL(2g+1,
\mathcal{O}_{\mathcal{E}})$ such that for $z\in \Sigma$, $z\circ
\rho_{\mathcal{E},\std}=\rho_{\pi_z,\std,p}$. If $\pi$ has
Iwahori level at $v\mid p$ then $\rho_{\mathcal{E},\std}$ admits
a refinement in the sense of \S \ref{sect:refined galois}.

In the case $g=2$, if $\rho_{\pi,\spin,p}$ is irreducible one obtains an analogous analytic spin Galois representation $\rho_{\mathcal{E},\spin}:G_F\to \GSp(4, \mathcal{O}_{\mathcal{E}})$ which admits a refinement if $\pi_v$ is Iwahori spherical at $v\mid p$.
\end{lemma}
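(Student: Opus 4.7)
The plan is to follow the template of the proof of Lemma \ref{l:hmf eigenvariety}, replacing $\GL(2)$ with $\GSp(2g)$ and invoking Theorem \ref{t:galois gsp} at classical points. I will treat the standard case in detail; the spin case for $g=2$ is entirely analogous, using part (1) rather than part (2) of Theorem \ref{t:galois gsp}, and noting that the similitude character $\nu\circ\rho_{\mathcal{E},\spin}$ interpolates because it is determined by the central character of $\pi_z$, which varies analytically on $\mathcal{E}$.

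First, I construct a pseudorepresentation. For each finite place $v\nmid p$ of $F$ at which $\pi_v$ is unramified (i.e.\ away from a finite set determined by the tame level), the Satake isomorphism expresses $\Tr\rho_{\pi_z,\std,p}(\Frob_v)$ for $z\in\Sigma$ as a polynomial in the values of $\theta(T)$ for $T\in\mathcal{H}^p$, using the local-global compatibility at $v\nmid p$ supplied by Theorem \ref{t:galois gsp}(2). This exhibits $\Tr\rho_{\pi_z,\std,p}(\Frob_v)$ as the specialization at $z$ of an analytic function $T(\Frob_v)\in\mathcal{O}_{\mathcal{E}}$. By Chebotarev density of $\{\Frob_v\}$ in $G_F$ and continuity (as in the proof of Lemma \ref{l:hmf eigenvariety}), these functions extend uniquely to a continuous pseudorepresentation $T:G_F\to\mathcal{O}_{\mathcal{E}}$ of dimension $2g+1$.

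Second, since $\rho_{\pi,\std,p}$ is irreducible by hypothesis, after shrinking $\mathcal{W}$ to a sufficiently small affinoid neighborhood of the point corresponding to $\pi$, I may lift $T$ to an actual Galois representation $\rho_{\mathcal{E},\std}:G_F\to\GL(2g+1,\mathcal{O}_{\mathcal{E}})$ by the standard result on pseudorepresentations used in the proof of Lemma \ref{l:hmf eigenvariety} (cf.\ \cite[4.2.6]{jorza:thesis}). By construction, $z\circ\rho_{\mathcal{E},\std}\cong\rho_{\pi_z,\std,p}$ for all $z\in\Sigma$.

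Third, to establish the refinement when $\pi$ is Iwahori spherical at $v\mid p$: as in the end of the proof of Lemma \ref{l:hmf eigenvariety}, by shrinking $\mathcal{W}$ I may assume that the slopes of the eigenvalues of each $U_{p,i}=[\Iw\beta_{g-i}\Iw]$ on the chosen $p$-stabilization $\nu_v$ are constant on $\mathcal{E}$, so the classical points at which $\rho_{\mathcal{E},\std}|_{G_{F_v}}$ is crystalline remain Zariski dense. Lemma \ref{l:hecke local gsp} gives upper-triangular formulas for the action of each $U_{p,i}$ on $\pi_v^{\Iw}$ in the Weyl-indexed basis $\{e_w\}$: the diagonal entry at $e_w$ corresponding to the refinement $\nu_v$ is an explicit monomial in the Satake parameters $\chi_1(p),\ldots,\chi_g(p),\sigma(p)$ times a power of $p$. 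Inverting this triangular linear system (using that $w$ is fixed once $\nu_v$ is chosen) expresses each $\chi_j(p)$ as a Laurent monomial in the analytic functions $a_{v,i}(z)=z(\theta(U_{p,i}))$, yielding analytic functions $F_1,\dots,F_{2g+1}$ on $\mathcal{E}$. Together with the analytic Hodge--Tate weights $\kappa_i$ read off from Theorem \ref{t:galois gsp}(2) and the universal characters $\chi_i:\Zp^\times\to\mathcal{O}(\mathcal{E})^\times$ obtained by interpolating the algebraic characters $u\mapsto u^{\kappa_i(z)}$ through $\mathcal{W}$, this gives the data required by conditions (1)--(6) of \S\ref{sect:refined galois}; the accumulation condition (5) follows as in \cite{liu:triangulation} from density of classical points with arbitrarily large gaps between Hodge--Tate weights (which one achieves by varying the weight within $\mathcal{W}$).

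The main obstacle is the third step: the eigenvalues of the $U_{p,i}$ a priori encode only symmetric combinations of the Satake parameters, so one must exploit the triangular structure of Lemma \ref{l:hecke local gsp} \emph{with respect to the fixed refinement} to isolate each parameter individually and produce the ordered data $(F_i,\chi_i)$ of a refined family. The upper-triangularity in the Bruhat order, rather than semisimplicity of the Hecke action, is what makes this extraction analytic on all of $\mathcal{E}$ rather than merely along the dense set $\Sigma$.
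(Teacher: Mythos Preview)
Your outline for steps one and two (pseudorepresentation plus lifting via irreducibility) matches the paper and is fine.

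The gap is in your third step, specifically the sentence ``so the classical points at which $\rho_{\mathcal{E},\std}|_{G_{F_v}}$ is crystalline remain Zariski dense.'' In the $\GL(2)$ case, Iwahori-spherical means either unramified or a twist of Steinberg, and the Steinberg slope is a fixed affine function of the weight; constant slope therefore cuts out a proper subvariety and the unramified locus is dense. For $\GSp(2g)$ with $g\geq 2$ this dichotomy fails: there are many ramified Iwahori-spherical representations besides Steinberg, and you have not explained why they cannot be dense on $\mathcal{E}$. Constant slope alone does not obviously force unramifiedness.

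The paper fills this gap by invoking Tadi\'c's classification \cite[Theorem 7.9]{tadic:symplectic}: a reducible unramified principal series $\chi_1\times\cdots\times\chi_g\rtimes\sigma$ must satisfy one of a finite list of conditions on the $\chi_i$ (namely $\chi_i=|\ |^{\pm 1}$, or $\chi_i\chi_j^{\pm 1}=|\ |$, or several $\chi_i^2=1$). One then solves the triangular system from Lemma \ref{l:hecke local gsp} to express each $\chi_i(p)$ as a monomial in the $\theta(U_{v,j})$ times a power of $p$ whose exponent is an affine function of the weight $(\mu_{v,1},\ldots,\mu_{v,g};\mu_0)$. Since the slopes of the $\theta(U_{v,j})$ are locally constant on $\mathcal{E}$, each of Tadi\'c's reducibility conditions imposes a nontrivial linear relation on the weights, hence defines a proper closed subset of $\mathcal{W}$. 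Because $\mathcal{E}\to\mathcal{W}$ has full-dimensional image, the complement---the unramified locus---is Zariski dense, and this furnishes the set $Z$ required for the refined family. You need to add this argument (or an equivalent one) to make your proof complete.
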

\begin{proof}
We will follow the proof of Lemma \ref{l:hmf eigenvariety}. First, the existence of $\rho_{\mathcal{E},\spin}$ and $\rho_{\mathcal{E},\std}$ follows analogously.

Next, we need to show that if $\pi_v$ is Iwahori spherical at $v\mid p$ then there is a dense set of points $\Sigma'\subset\Sigma\subset \mathcal{E}$ such that if $z\in \Sigma'$ then $\pi_{z,v}$ is unramified at $v\mid p$. 

Let $z\in \Sigma$ and let $\pi_z$ be the associated cuspidal
representation. Suppose $\pi_{z,v}$ is not unramified. Since it
is Iwahori spherical it follows from \cite[Theorem
7.9]{tadic:symplectic} that for every $v\mid p$,
$\pi_{z,v}=\Ind\chi$ where
$\chi=\chi_1\times\cdots\times \chi_g\rtimes\sigma$ such that one of
the following is satisfied:
\begin{enumerate}
\item $\chi_i^2=1$ but $\chi_i\neq 1$ for at least 3 indices
$i$,
\item $\chi_i=|\ \ |^{\pm 1}$ for at least one index $i$, or
\item $\chi_i\chi_j^{\pm 1}=|\ \ |$ for at least one pair $(i,j)$ and choice of exponent.
\end{enumerate}
Denote by $\alpha_{v,i}$ the eigenvalue of $[\Iw\beta_{g-i}\Iw]$ acting
on $\pi_{z,v}$. There exists a permutation $\nu$ and a function
$\varepsilon:\{1,\ldots,g\}\to \{-1,1\}$ such that $\alpha_{v,i}$ are
given by Lemma \ref{l:hecke local gsp}. Solving, one obtains
\[\chi_{\nu^{-1}(i)}(p)^{\varepsilon(i)}=p^{c_{i,\nu,\varepsilon}-c_{i-1,\nu,\varepsilon}}\frac{\alpha_{v,i-1}}{\alpha_{v,i}}\]
for $1<i<g$, $\chi_{\nu^{-1}(g)}(p)=p^{2c_{g,\nu,\varepsilon}-c_{g-1,\nu,\varepsilon}}\alpha_{v,g-1}/\alpha_{v,g}^2$
and
$\chi_{\nu^{-1}(1)}(p)^{\varepsilon(1)}=p^{\mu_0-c_{1,\nu,\varepsilon}}\alpha_{v,1}$
where the last equality comes from the fact that
$\det\pi_{z,v}(p)=\chi_1\cdots\chi_g\sigma^2(p)=p^{\mu_0}$.

But $\alpha_{v,i}=|\lambda_v(\beta_{g-i})|_p\cdot\theta(U_{v,i})$
where
$\lambda_v(\beta_{g-i})=p^{\mu_{v,1}+\cdots+\mu_{v,i}-\mu_0}$ for $1\leq i<g$ and $\lambda_v(\beta_0)=p^{(\mu_{v,1}+\cdots+\mu_{v,g}-\mu_0)/2}$. We
deduce that
\[\chi_{\nu^{-1}(i)}(p)^{\varepsilon(i)}=p^{c_{i,\nu,\varepsilon}-c_{i-1,\nu,\varepsilon}+\mu_{v,i}}\frac{\theta(U_{v,i-1})}{\theta(U_{v,i})}\]
for $1<i<g$,
$\chi_{\nu^{-1}(g)}(p)=p^{2c_{g,\nu,\varepsilon}-c_{g-1,\nu,\varepsilon}+\mu_{v,g}} \theta(U_{v,g-1})/\theta(U_{v,g})^2$
and $\chi_{\nu^{-1}(1)}(p)^{\varepsilon(1)}=p^{\mu_0-c_{1,\nu,\varepsilon}+\mu_{v,1}-\mu_0}\theta(U_{v,1})= p^{-c_{1,\nu,\varepsilon}+\mu_{v,1}}\theta(U_{v,1})$.

The functions $\theta(U_{v,i})$ are analytic on $\mathcal{E}$
and so $v_p(\theta(U_{v,i}))$ is locally constant. By shrinking
$\mathcal{E}$ we may even assume they are constant. Since
$\pi_{z,v}$ is Iwahori spherical but ramified it follows from
the conditions listed above that $v_p(\chi_i(p))\in \{0,1\}$ or
$v_p(\chi_i(p)\chi_j(p)^{\pm1})=1$. This, however, implies certain linear combinations of the weights $\mu_{v,i}$ and $\mu_0$ are constant, which is a contradiction as $\mathcal{E}$ maps to a full-dimensional open set in the weight space.

Suppose $z\in \Sigma'$ in which case
$\rho_{\pi_z,\std,p}|_{G_{F_v}}$ is crystalline for $v\mid
p$. Consider the analytic functions
$a_{v,i}(z)=z(\theta(U_{v,i}))$ on $\mathcal{E}$. Let
$\kappa_{n+1}(z) = 0$, $\kappa_{n+1\pm
  i}(z)=\pm(\mu_{v,n+1-i}(z)+i)$ for $1\leq i\leq n$; Theorem
\ref{t:galois gsp} these are the Hodge--Tate weights, arranged
increasingly, of $z\circ\rho_{\mathcal{E}}|_{G_{F_v}}$. By
local-global compatibility the eigenvalues of $\phi_{\cris}$
acting on $\D_{\cris}(\rho_{\pi_z,p}|_{G_{F_v}})$ are
$\chi_i(p)^{\pm 1}$ and $1$. We will use the formulae above to
construct the analytic functions $F_k$.

Let $F_{n+1}(z)=1$. For $1<i<n$ let
\[F_{n+1\pm (n+1-i)}(z)=\left(p^{c_{i,\nu,\varepsilon}-c_{i-1,\nu,\varepsilon}-i}\frac{a_{v,i-1}(z)}{a_{v,i}(z)}\right)^{\pm
1}.\]
Let $F_{n+1\pm (n+1-n)}(z) = \left(p^{2c_{g,\nu,\varepsilon}-c_{g-1,\nu,\varepsilon}-n} a_{v,g-1}/a_{v,g}^2\right)^{\pm 1}$ and $F_{n+1\pm (n+1-1)}(z)=\left(p^{-c_{1,\nu,\varepsilon}-1}a_{v,1}\right)^{\pm 1}$. Thus $p^{\kappa_{n+1\pm i}}F_{n+1\pm i}=\chi_{\nu^{-1}(i)}(p)^{\pm \varepsilon(i)}$ for $1<i\leq n$ and $p^{\kappa_{n+1\pm 1}}F_{n+1\pm 1}=\chi_{\nu^{-1}(1)}(p)^{\pm 1}$. By Theorem \ref{t:galois gsp} these are the eigenvalues of $\phi_{\cris}$ and so $\rho_{\mathcal{E},\std,p}$ admits a refinement.

Finally, we need to construct a refinement for $\rho_{\mathcal{E},\spin,p}$ in the genus $n=2$ case. The eigenvalues of $\phi_{\cris}$ in this case acting on $\chi_1\times\chi_2\rtimes\sigma$ are $p^{-3/2}\times\{\sigma(p), \sigma(p)\chi_1(p), \sigma(p)\chi_2(p), \sigma(p)\chi_1(p)\chi_2(p)\}$. Let $\kappa_1 = (\mu_0-\mu_{v,1}-\mu_{v,2})/2$, $\kappa_2 = (\mu_0-\mu_{v,1}+\mu_{v,2})/2+1$, $\kappa_3 = (\mu_0+\mu_{v,1}-\mu_{v,2})/2+2$ and $\kappa_4=(\mu_0+\mu_{v,1}+\mu_{v,2})/2+3$. For simplicity of notation we will assume that $\nu=1$ and $\varepsilon=1$, the other cases being analogous. (Later we will choose this refinement anyway.) Then $\sigma(p)=p^{c_2+(\mu_0-\mu_{v,1}-\mu_{v,2})/2}a_{v,2}^{-1}$, $\chi_2(p)=p^{\mu_{v,1}-c_1}a_{v,1}$ and $\chi_1(p) = p^{c_1-2c_2+\mu_{v,2}}a_{v,2}^{2}a_{v,1}^{-1}$. Write $F_1=p^{c_2-3/2}a_{v,2}^{-1}$, $F_2=p^{c_1-c_2-1-3/2}(a_{v,2}/a_{v,1})$, $F_3=p^{c_2-c_1-2-3/2}(a_{v,1}/a_{v,2})$ and $F_4=p^{-c_2-3-3/2}a_{v,2}$ which are analytic and satisfy $p^{\kappa_1}F_1=p^{-3/2}\sigma(p)$, $p^{\kappa_2}F_2=p^{-3/2}\sigma(p)\chi_1(p)$, $p^{\kappa_3}F_3=p^{-3/2}\sigma(p)\chi_2(p)$ and $p^{\kappa_4}F_4=p^{-3/2}\sigma(p)\chi_1(p)\chi_2(p)$, which are the eigenvalues of $\phi_{\cris}$. Thus $\rho_{\mathcal{E},\spin,p}$ has a refinement.
\end{proof}

\begin{corollary}\label{c:gsp4 triangulation}
Let $\pi$ be a Hilbert modular form of infinity type $(k_1,\ldots, k_d;w)$. Suppose $\pi$ is not CM and let $\Pi$ be the cuspidal representation of $\GSp(4,\AA_F)$ from Theorem \ref{t:GL(4) to GSp(4)}. Let $\nu$ be a $p$-stabilization of $\Pi$ and let $m\in \ZZ$ such that $\Pi\otimes|\cdot|^m$ has noncritical slope. For $v\mid
p$, assume that $\pi_v$ is Iwahori spherical; if $\pi_v$ is an
unramified principal series assume that $\alpha_v/\beta_v\notin\mu_{60}$. Then $\rho_{\mathcal{E},\spin,p}$ has a global triangulation whose graded pieces $\mathcal{R}(\delta_i)$ are such that $\delta_i(u)=u^{\kappa_i}$ and $\delta_i(p)=F_i$ from the proof of Lemma \ref{l:gsp eigenvariety}.
\end{corollary}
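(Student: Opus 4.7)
My plan is to deduce this corollary from Liu's global triangulation theorem (Theorem \ref{t:triangulation}) applied to the refined family $\rho_{\mathcal{E},\spin,p}$ constructed in Lemma \ref{l:gsp eigenvariety}. The required refined-family data $(\kappa_i, F_i, \chi_i)$ is already exhibited in the proof of that lemma, and the Zariski-dense subset $Z = \Sigma' \subset \mathcal{E}$ of classical points at which $\pi_z$ is unramified (hence crystalline) at every $v \mid p$ was also constructed there. Once regularity and noncriticality are verified at a suitable $z \in Z$ accumulating at $\Pi$, Theorem \ref{t:triangulation} gives a trianguline structure over an affinoid neighborhood, and unwinding $\delta_i = \Delta_i/\Delta_{i-1}$ yields the asserted formulae $\delta_i(p) = F_i$ and $\delta_i(u) = \chi_i(u) = u^{\kappa_i}$ directly from the definitions of $F_i$ and $\chi_i$ in Lemma \ref{l:gsp eigenvariety}.

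The core of the proof is the verification of regularity and noncriticality at a suitable classical $z$ near $\Pi$. Noncriticality of the refinement is straightforward: the Hodge--Tate weights $\kappa_i(z)$ given in the proof of Lemma \ref{l:gsp eigenvariety} are listed in strictly increasing order, and the noncritical-slope hypothesis (which is exactly what the twist by $|\cdot|^m$ arranges on the automorphic side) then gives noncriticality of the refinement at classical points accumulating at $\Pi$, just as in the proof of Corollary \ref{c:hmf triangulation} for $\GL(2)$.

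Regularity is the delicate step and is where the hypothesis $\alpha_v/\beta_v \notin \mu_{60}$ is used. At an unramified classical $z \in \Sigma'$ near $\Pi$, local-global compatibility together with the $\Sym^3$-transfer of Theorem \ref{t:GL(4) to GSp(4)} identifies the Frobenius eigenvalues on $\D_{\cris}(\rho_{\pi_z,\spin,p}|_{G_{F_v}})$, up to a common power of $p$, with the four quantities $\alpha_v^3,\ \alpha_v^2\beta_v,\ \alpha_v\beta_v^2,\ \beta_v^3$, where $\alpha_v,\beta_v$ are the Satake parameters of $\pi_{z,v}$. For the chosen refinement, the multiplicity-one conditions of $\det\varphi_{\cris}|_{\mathcal{F}_i}$ in $\D_{\cris}(\wedge^i V_z)$ for $i=1,2,3$ reduce to the non-vanishing of finitely many expressions of the form $1-(\alpha_v/\beta_v)^k$ for small $k$, and so the hypothesis $\alpha_v/\beta_v \notin \mu_{60}$ is amply sufficient; by analyticity of the ratio $F_2/F_3$ on $\mathcal{E}$ (compare the formulas for $F_2, F_3$ in Lemma \ref{l:gsp eigenvariety}), the condition persists throughout a neighborhood of $\Pi$.

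To get the triangulation at $\Pi$ itself, which is needed when $\pi_v$ is Steinberg and $\Pi$ lies outside $Z$, I invoke the stronger Theorem 5.3.1 of Liu mentioned in the remark following Theorem \ref{t:triangulation}, which produces trianguline structure at every $x \in \mathcal{E}$ after a harmless proper birational modification. The main obstacle is thus the regularity calculation above; I expect the rest to proceed exactly as in the $\GL(2)$ case of Corollary \ref{c:hmf triangulation}.
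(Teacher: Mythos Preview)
Your proposal is correct and takes essentially the same approach as the paper: verify noncriticality (immediate from the increasing ordering of the Hodge--Tate weights) and regularity (for each $i\in\{1,2,3,4\}$, every product of $i$ terms drawn from $\{\alpha_v^3,\alpha_v^2\beta_v,\alpha_v\beta_v^2,\beta_v^3\}$ occurs with multiplicity one, which reduces to $\alpha_v/\beta_v\notin\mu_{60}$), and then apply Theorem \ref{t:triangulation}. The paper's proof is terser and checks regularity directly at the point corresponding to $\Pi$; it does not separately discuss the Steinberg case or invoke Liu's Theorem 5.3.1, so your final paragraph goes beyond what the paper actually does (and note that the birational-modification version of Liu's theorem would not immediately hand you the explicit $\delta_i$ at a non-crystalline point anyway).
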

\begin{proof}
Since the Hodge--Tate weights in the triangulation are ordered increasingly the only thing left to check is that the associated refinement is regular, i.e., that $\det\varphi$ on the filtered piece $\mathcal{F}_i$ has multiplicity one in $\D_{\cris}(\wedge^i \rho_{\Pi})$. This is equivalent to showing that for each $i\in \{1,2,3,4\}$, each product of $i$ terms in $\{\alpha_v^3,\alpha_v^2 \beta_v, \alpha_v \beta_v^2, \beta_v^3\}$ occurs once, which can be checked if $\alpha_v/\beta_v\notin\mu_{60}$. 
\end{proof}

\begin{corollary}\label{c:gsp triangulation}
Let $\pi$ be a Hilbert modular form of infinity type $(k_1,\ldots, k_d;w)$. Suppose $\pi$ is not CM and let $\Pi$ be the cuspidal representation of $\GSp(2n,\AA_F)$ from Theorem \ref{t:GL(2n+1) to Sp(2n)}. Let $\nu$ be a $p$-stabilization of $\Pi$ and let $m\in \ZZ$ such that $\Pi\otimes|\cdot|^m$ has noncritical slope. For $v\mid
p$, assume that $\pi_v$ is Iwahori spherical; if $\pi_v$ is an
unramified principal series assume that $\alpha_v/\beta_v\notin\mu_{\infty}$. Then $\rho_{\mathcal{E},\std,p}$ has a global triangulation whose graded pieces $\mathcal{R}(\delta_i)$ are such that $\delta_i(u)=u^{\kappa_i}$ and $\delta_i(p)=F_i$ from the proof of Lemma \ref{l:gsp eigenvariety}.
\end{corollary}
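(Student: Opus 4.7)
The plan is to follow the argument of Corollary \ref{c:gsp4 triangulation}, applying Liu's Theorem \ref{t:triangulation} to the refined family $\rho_{\mathcal{E}, \std, p}$ constructed in Lemma \ref{l:gsp eigenvariety}. The triangulation will exist provided the associated refinement at the point over $\Pi \otimes |\cdot|^m$ is both noncritical and regular at each place $v \mid p$.

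Noncriticality follows immediately from the ordering of Hodge--Tate weights in increasing order, which is built into the construction in Lemma \ref{l:gsp eigenvariety} (cf.\ \cite[\S 5.2]{liu:triangulation}).

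For regularity, one needs to verify that $\det \varphi$ on each filtered piece $\mathcal{F}_i$ has multiplicity one in $\D_{\cris}(\wedge^i \rho_{\Pi, \std, p}|_{G_{F_v}})$. Since $\Pi$ is a lift of $\Sym^{2n}\pi$, the Frobenius eigenvalues on $\D_{\cris}(\rho_{\Pi, \std, p}|_{G_{F_v}})$ in the unramified case are, up to a common scalar coming from the twist, the $2n+1$ values $\{\alpha_v^{2n-j}\beta_v^j\}_{j=0}^{2n}$; equivalently, after dividing through by $(\alpha_v \beta_v)^n$, they are $(\alpha_v/\beta_v)^k$ for $k \in \{-n, \ldots, n\}$. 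The assumption $\alpha_v/\beta_v \notin \mu_\infty$ guarantees that distinct sums of exponents $\sum_{k \in S} k$ over $i$-element subsets $S \subset \{-n, \ldots, n\}$ produce distinct eigenvalues in $\D_{\cris}(\wedge^i \rho_{\Pi, \std, p}|_{G_{F_v}})$. Hence the multiplicity of $\det \varphi|_{\mathcal{F}_i}$ equals the number of such subsets realizing the specific exponent sum determined by $\mathcal{F}_i$. Since the refinement built in Lemma \ref{l:gsp eigenvariety} has HT weights $\kappa_1 < \cdots < \kappa_{2n+1}$ arranged increasingly, the $i$-subset determining $\mathcal{F}_i$ is an extremal one (either the smallest or the largest $i$-sum of exponents), and such extrema are uniquely achieved, yielding multiplicity one. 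The Steinberg case, when $\pi_v$ is special, is analogous but easier: the $\varphi$-eigenvalues on $\D_{\st}(\Sym^{2n}\rho_\pi|_{G_{F_v}})$ are distinguished by consecutive $p$-adic valuations, so extremal subsums are automatically unique without any genericity condition on Satake parameters.

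The main bookkeeping step will be to identify, via the explicit formulae for the $F_i$ in the proof of Lemma \ref{l:gsp eigenvariety} together with the description of Satake parameters of $\Sym^{2n}\pi$, which power $(\alpha_v/\beta_v)^k$ lives in each graded piece $\mathcal{F}_i/\mathcal{F}_{i-1}$. Once this matching is in place, the extremality of the chosen subset follows from the monotonic relationship between HT weights and the exponents $k$, and Theorem \ref{t:triangulation} produces the desired global triangulation whose graded pieces are $\mathcal{R}(\delta_i)$ with $\delta_i(u) = u^{\kappa_i}$ and $\delta_i(p) = F_i$.
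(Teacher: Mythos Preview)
Your proposal is correct and follows the same strategy as the paper: invoke Theorem~\ref{t:triangulation} after verifying noncriticality (immediate from the increasing ordering of the Hodge--Tate weights) and regularity of the refinement via the products of the Frobenius eigenvalues $(\alpha_v/\beta_v)^k$, $-n\le k\le n$, under the hypothesis $\alpha_v/\beta_v\notin\mu_\infty$. Your regularity argument is in fact sharper than the paper's one-line version: you correctly note that only the product corresponding to the chosen $\mathcal{F}_i$ must have multiplicity one and that this holds by extremality of the associated $i$-subset, whereas the paper asserts that \emph{every} $i$-fold product occurs once---literally false for $n\ge2$ (e.g.\ the subsets $\{-1,1\}$ and $\{-2,2\}$ give the same product), though harmless for the particular refinement actually in use.
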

\begin{proof}
As in the previous corollary we only need to check that for each $1\leq i\leq 2n+1$ each product of $i$ terms in $\{(\alpha_v/\beta_v)^k|-n\leq k\leq n\}$ occurs only once. Again, this can be checked when $\alpha_v/\beta_v\notin\mu_\infty$. 
\end{proof}

\subsection{Eigenvarieties for  unitary groups}\label{sect:eigenvarieties for unitary}
One could reproduce the results of \S \ref{sect:eigenvarieties
  for symplectic} in the context of unitary groups. Indeed, the
endoscopic classification for unitary groups was completed by
Mok and compact unitary groups of course have discrete series so
all the results translate into this context, again under the
assumption of stabilization of the twisted trace formula. The
main reason for redoing the computations using unitary groups is work in progress of Eischen--Harris--Li--Skinner and Eischen--Wan which will produce $p$-adic $L$-functions for unitary groups.

Let $F/\QQ$ be a totally real field in which $p$ splits completely and $E/F$ a CM
extension in which $p$ splits completely as well. Suppose $U$ is a definite unitary group over $F$, in
$n$ variables, attached to $E/F$. Suppose $\pi$ is an
irreducible (necessarily cuspidal) automorphic representation
$\pi$ of $U(\AA_F)$ of cohomological weight
$\bigoplus_{v\mid\infty} (\mu_{v,1},\ldots, \mu_{v,n})$. Then the
restriction to $W_{\CC}$ of the $L$-parameter of $\pi_v$ is given by
\[z\mapsto \diag((z/\overline{z})^{\mu_{v,i}+(n+1)/2-i})\]
(cf. \cite[\S 6.7]{bellaiche-chenevier:selmer}).

If for some $v\mid p$ the representation $\pi_v$ has Iwahori level then the Hecke operators $U_{v,i}=[\Iw e_i^\vee(p)\Iw]$ act on $\pi_v$ where $e_i^\vee$ is dual to the character $e_i$ isolating the $i$-th entry on $T$. For consistency of notation with the previous section we remark that Urban's $\theta(U_{v,i})$ is denoted by $\delta^{1/2}\psi_{\pi,\mathcal{R}}$ in \cite[\S 7.2.2]{bellaiche-chenevier:selmer}. If $\pi_v=\chi_1\times\cdots\times\chi_n$ is an unramified principal series then $\pi_v^{\Iw}$ is $n$ dimensional and the Hecke operators $U_{v,i}$ can be simultaneously written in upper triangular form with diagonal entries $\chi_i(p)p^{-(n-1)/2}$.

\begin{theorem}\label{t:galois unitary}
Suppose $U$ and $\pi$ are as above, with $\pi$ of cohomological
weight $\bigoplus_{v\mid\infty} (\mu_{v,1},\ldots,
\mu_{v,n})$. Then there exists a continuous Galois
representation $\rho_{\pi,p}:G_E\to \GL(n,
\overline{\QQ}_p)$ such that:
\begin{enumerate}
\item If $v\nmid p\infty$ and $\pi_v$ is unramified then
$\rho_{\pi,p}|_{G_{E_w}}$ is unramified for $w\mid p$ and
$\WD(\rho_{\pi,p}|_{G_{E_w}})^{\Frss}\cong\rec(\BC_{E_w/F_v}(\pi_v)\otimes|\cdot|^{-(n-1)/2})$.
\item If $v\mid p$, since it splits in $E$ we may write $v=w \overline{w}$ where $w$ is the finite place of $E$ corresponding to $\iota_p:\overline{\QQ}\to \overline{\QQ}_p$. If $\pi_v=\chi_1\times\cdots\times\chi_n$ is unramified then $\rho_{\pi,p}|_{G_{E_w}}$ is crystalline with Hodge--Tate weights $-\mu_{v,i}+i$ and $\phi_{\cris}$ has eigenvalues $\chi_i(p)p^{-(n-1)/2}$.
\end{enumerate}
\end{theorem}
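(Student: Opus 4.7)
The plan is to reduce to the known existence of Galois representations attached to regular algebraic conjugate self-dual (RACSDC) automorphic representations of $\GL(n,\AA_E)$ via the base change lift from the definite unitary group $U$.

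First, I would apply the stable base change from $U$ to $\GL(n)_{/E}$, which is due to Labesse (and is subsumed in the endoscopic classification of Mok, resp.\ Kaletha--Minguez--Shin--White, for quasi-split and inner forms of unitary groups). The resulting automorphic representation $\Pi:=\BC_{E/F}(\pi)$ of $\GL(n,\AA_E)$ is conjugate self-dual and regular algebraic because $U$ is definite and $\pi$ has regular cohomological weight. Writing $\Pi=\Pi_1\boxplus\cdots\boxplus\Pi_r$ as an isobaric sum of unitary cuspidal automorphic representations $\Pi_i$ of $\GL(n_i,\AA_E)$, each $\Pi_i$ is itself RACSDC, so I may treat each summand separately and take direct sums at the end.

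Second, to each RACSDC $\Pi_i$ I would attach Galois representations $\rho_{\Pi_i,p}:G_E\to\GL(n_i,\overline{\QQ}_p)$ using the construction of Chenevier--Harris and Shin (completed by Caraiani at $p$); note that $\pi$ being cohomological on a definite group places us in the setting where these results apply without any extra self-twist hypothesis. Local-global compatibility at finite places $v\nmid p\infty$ where $\Pi_{i,w}$ is unramified is part of these constructions, and combining the various summands with the appropriate Tate twist (to match the ``analytic'' normalization $|\cdot|^{-(n-1)/2}$ appearing in the statement) gives $\rho_{\pi,p}$ and claim (1).

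Third, for claim (2) I would use that $p$ splits completely in $E$ to identify $E_w\cong\Qp$, so that $\rho_{\Pi,p}|_{G_{E_w}}$ is crystalline whenever $\Pi_w$ is unramified. Because $v=w\overline{w}$ splits in $E/F$, base change at $v$ is just the identity on one factor, so $\BC(\pi_v)_w\cong\chi_1\times\cdots\times\chi_n$ as an unramified principal series of $\GL(n,\Qp)$; the crystalline Frobenius eigenvalues therefore equal the Satake parameters $\chi_i(p)$, rescaled by the Tate twist to $\chi_i(p)p^{-(n-1)/2}$. The Hodge--Tate weights are computed from the infinitesimal character of $\pi_v$, read off from the archimedean $L$-parameter $z\mapsto\diag((z/\overline{z})^{\mu_{v,i}+(n+1)/2-i})$ given in the preamble: converting this Harish-Chandra datum to Hodge--Tate weights via the standard $\rho_n$-shift and taking into account the duality convention for ``cohomological weight'' used in the excerpt yields the asserted Hodge--Tate weights $-\mu_{v,i}+i$.

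The main obstacle is bookkeeping: aligning the weight conventions (Harish-Chandra parameter vs.\ highest weight vs.\ infinitesimal character, and the dual ambiguity already flagged in the proof of Theorem~\ref{t:galois gsp}), the Tate-twist normalization between the arithmetic normalization in which Chenevier--Harris--Shin produce $\rho_{\Pi,p}$ and the unitary normalization implicit in the statement, and the case where $\Pi$ is genuinely isobaric rather than cuspidal (where one must twist each summand by suitable finite-order or algebraic Hecke characters to ensure conjugate self-duality of each $\Pi_i$ individually before invoking the Galois-representation construction).
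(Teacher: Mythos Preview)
Your proposal is correct and follows essentially the same strategy as the paper: transfer $\pi$ from the definite unitary group to $\GL(n)_{/E}$, decompose the resulting regular algebraic conjugate self-dual automorphic representation isobarically into cuspidal pieces, attach Galois representations to each piece, and then read off local-global compatibility and Hodge--Tate weights from the known results for $\GL(n)$. The paper's own proof is a one-sentence pointer to the analogous argument for $\GSp(2n)$ (Theorem~\ref{t:galois gsp}), with Mok's transfer replacing Arthur's classification and \cite[Theorem 1.2]{blght:calabi-yau-2} replacing \cite[Theorem 1.1]{blght:calabi-yau-2} for the Hodge--Tate weights; your sketch fleshes out exactly these steps.

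Two minor remarks. First, you cite Chenevier--Harris/Shin/Caraiani where the paper invokes \cite{blght:calabi-yau-2}; either suffices here, so this is only a bibliographic difference. Second, you write the base change as an isobaric sum $\Pi_1\boxplus\cdots\boxplus\Pi_r$ of cuspidals, implicitly assuming the Arthur parameter is generic; the paper's template in Theorem~\ref{t:galois gsp} keeps the more general shape $\bigoplus_i\bigoplus_{j=1}^{\ell_i}\phi_{\Pi_i}|\cdot|^{j-(\ell_i+1)/2}$. In the present setting the regularity of the infinitesimal character forces $\ell_i=1$, so your simplification is justified, but it is worth saying so explicitly.
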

\begin{proof}
The proof is analogous to that of Theorem \ref{t:galois gsp} as the transfer from $U$ to $\GL$ is the content of \cite{mok:functoriality1} (cf. \cite[Corollary V.1.7]{scholze:torsion}). The statement about Hodge-Tate weights follows by appealing to \cite[Theorem 1.2]{blght:calabi-yau-2} rather than \cite[Theorem 1.1]{blght:calabi-yau-2}.
\end{proof}

\begin{remark}
The literature contains base change results for both isometry
unitary and similitude unitary groups to various degrees of
generality. We remark that one may deduce base change for
isometry unitary groups from the analogous results for
similitude results using algebraic liftings of automorphic
representations (\cite[Proposition 12.3.3]{patrikis:tate}).
\end{remark}

The main theorem of \cite{chenevier:unitary-eigenvarieties} implies that if $4\mid n$, which we will asume, then the conclusion of Theorem \ref{t:urban} holds for $\pi$ and a $p$-stabilization $\nu$. Moreover, if $\rho_{\pi,p}$ is irreducible then Theorem \ref{t:galois unitary} implies the existence of an analytic Galois representation $\rho_{\mathcal{E},p}:G_E\to \GL(n, \mathcal{O}_{\mathcal{E}})$ interpolating, as before, the Galois representations attached to the classical regular points on $\mathcal{E}$. 

\begin{corollary}\label{c:unitary triangulation}
Let $F$ be a totally real field in which $p$ splits completely. Let $\pi$ be a Hilbert modular form over $F$, of infinity type $(k_1,\ldots, k_d;w)$, suppose there exist finite places $w_1, w_2$ not above $p$ with the property that $\pi_w$ is special for $w\in \{w_1,w_2\}$, and suppose that $\pi_v$ is Iwahori spherical for $v\mid p$ and that if $\pi_v$ is unramified with Satake parameters $\alpha_v$ and $\beta_v$ then $\alpha_v/\beta_v\notin\mu_\infty$. Suppose $\pi$ is not CM. Let $E$ be a CM extension of $F$, $\psi$ a Hecke character of $E$, and $\Pi$ a cuspidal automorphic representation of $U(\AA_F)$ such that $\Pi=\psi\otimes\BC_{E/F}\Sym^n\pi$ as in Proposition \ref{p:sym hmf to unitary}. Let $\mathcal{E}$ and $\rho_{\mathcal{E},p}$ as above. Then $\rho_{\mathcal{E},p}|_{G_{E_w}}$ for $w\mid v\mid p$ a finite place of $E$ admits a triangulation with graded pieces $\mathcal{R}(\delta_i)$ such that $\delta_i(u)=u^{\kappa_i}$ for $u\in \ZZ_p^\times$ and $\delta_i(p)=F_i$ where $\kappa_i=-\mu_{v,i}+i$ and $F_i=p^{(n-1)/2-i}a_{v,i}$ where $a_{v,i}=\theta(U_{v,i})$ is analytic over $\mathcal{E}$.
\end{corollary}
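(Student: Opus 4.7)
The plan is to follow the template of Corollary \ref{c:gsp triangulation} and reduce the claim to an application of Liu's triangulation theorem (Theorem \ref{t:triangulation}) for refined families of Galois representations. I need to exhibit $\rho_{\mathcal{E},p}|_{G_{E_w}}$ as (the restriction of) a refined family on an affinoid neighborhood of the point corresponding to $\Pi$, then verify that the classical point in question is regular and noncritical.

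First I would identify a sufficiently large Zariski-dense set of classical points $\Sigma'\subset\Sigma$ at which the local representation $\Pi_{z,v}$ is \emph{unramified} (not merely Iwahori spherical) at every $v\mid p$. This is the unitary analogue of the argument in Lemma \ref{l:gsp eigenvariety}: the functions $v_p(a_{v,i})$ are locally constant on $\mathcal{E}$, so after shrinking we may assume they are constant; if $\Pi_{z,v}$ were ramified Iwahori-spherical on a Zariski-dense subset, Tadi\'c's classification together with the explicit relation between Hecke eigenvalues and Satake parameters would force a linear relation among the weights $\mu_{v,i}$ and $\mu_0$, contradicting the fact that the weight map $w:\mathcal{E}\to\mathcal{W}$ has open image. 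At each $z\in\Sigma'$, Theorem \ref{t:galois unitary} tells us that $\rho_{\mathcal{E},p}|_{G_{E_w}}$ specializes to a crystalline representation with Hodge--Tate weights $-\mu_{v,i}(z)+i$ and crystalline Frobenius eigenvalues of the form $\chi_i(p)p^{-(n-1)/2}$, which coincide with $p^{\kappa_i(z)}F_i(z)$ under the definitions $\kappa_i=-\mu_{v,i}+i$ and $F_i=p^{(n-1)/2-i}a_{v,i}$.

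Next I would verify the axioms of a refined family listed in \S\ref{sect:refined galois}. The Hodge--Tate weights $\kappa_i$ and the functions $F_i$ are analytic on $\mathcal{E}$ by construction. The existence of the interpolating characters $\chi_i:\ZZ_p^\times\to\mathcal{O}(\mathcal{E})^\times$ with $\chi_i'(1)=\kappa_i$ and $\chi_{z,i}(u)=u^{\kappa_i(z)}$ follows from the structure of the weight space for the unitary eigenvariety, exactly as in the symplectic case. Accumulation of the sets $Z_C$ is standard from density of classical points with arbitrarily small $p$-adic contribution to the slopes. Noncriticality at $z\in\Sigma'$ is automatic: dominance of the weight gives $\mu_{v,i}(z)\geq\mu_{v,i+1}(z)$, hence $\kappa_{i+1}(z)-\kappa_i(z)\geq 1$, so the Hodge--Tate weights are distinct and strictly increasing as required.

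The substantive step is \emph{regularity} at the point $z_0$ corresponding to $\Pi$, i.e.\ that for every $1\leq i\leq n$ the product $F_1\cdots F_i\cdot p^{\kappa_1+\cdots+\kappa_i}$ occurs with multiplicity one among the $\binom{n}{i}$ products of $i$ distinct crystalline Frobenius eigenvalues on $\mathcal{D}_{\cris}(\wedge^i\rho_{\Pi,p}|_{G_{E_w}})$. Because $\Pi=\psi\otimes\BC_{E/F}\Sym^n\pi$ (and $w\mid v\mid p$ with $v$ split in $E$), the Satake parameters of $\Pi_v$ at $w$ are $\{\psi_w(p)\,\alpha_v^{n-j}\beta_v^{j}\,:\,0\leq j\leq n\}$. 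The required multiplicity-one property reduces, after factoring out the common power of $\psi_w(p)$, to the assertion that all products of $i$ elements from $\{(\alpha_v/\beta_v)^k\,:\,-n/2\leq k\leq n/2\}$ (up to a common overall factor) are distinct. This is exactly the condition $\alpha_v/\beta_v\notin\mu_\infty$, which is in the hypotheses; the argument is the same as in the proof of Corollary \ref{c:gsp triangulation}. This is the step I expect to be the most delicate, since it is the place where the global assumption on Satake parameters is genuinely used; everything else is formal bookkeeping. With regularity and noncriticality established, Theorem \ref{t:triangulation} produces the triangulation on an affinoid neighborhood of $z_0$, with graded pieces $\mathcal{R}(\delta_i)$ whose parameters are exactly $\delta_i(u)=u^{\kappa_i}$ and $\delta_i(p)=F_i$, completing the proof.
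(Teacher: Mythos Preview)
Your proposal is correct and follows essentially the same approach as the paper: verify that $\rho_{\mathcal{E},p}|_{G_{E_w}}$ is a refined family using Theorem~\ref{t:galois unitary} to match $\kappa_i$ and $F_i$ with Hodge--Tate weights and Frobenius eigenvalues, then invoke the condition $\alpha_v/\beta_v\notin\mu_\infty$ for regularity exactly as in Corollary~\ref{c:gsp triangulation}. Two cosmetic slips to fix: at split places the local group is $\GL_n(\QQ_p)$, so the relevant classification of ramified Iwahori-spherical representations is Bernstein--Zelevinsky rather than Tadi\'c, and there is no similitude weight $\mu_0$ for the isometry unitary group, so drop it from your linear-relation argument.
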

\begin{proof}
Theorem \ref{t:galois unitary} implies that at regular classical points which are unramified at $v\mid p$ the analytic functions $\kappa_i$ give the Hodge--Tate weights of $\rho_{\mathcal{E},p}|_{G_{E_w}}$. Thus it suffices to check that $p^{\kappa_i}F_i$ gives the eigenvalues of $\phi_{\cris}$ at such regular unramified crystalline points. This follows from the fact that the eigenvalue of $U_{v,i}$ on $\pi_v$ equals $a_{v,i}$ times $|\lambda(e_i^\vee(p))|_p^{-1}\delta^{-1/2}(e_i^\vee(p))$. Finally, the condition $\alpha_v/\beta_v\notin\mu_\infty$ implies the existence of the global triangulation as in the proof of Corollary \ref{c:gsp triangulation}.
\end{proof}

\section{Computing the $\mathcal{L}$-invariants}\label{sect:l}
Let $F$ be a totally real field in which the prime $p$ splits
completely and let $\pi$ be a non-CM cohomological Hilbert
modular form of infinity type $(k_1,\ldots,k_d;w)$. Let
$V_{2n}=\rho_{\pi,p}\otimes\det^{-n}\rho_{\pi,p}$. Suppose that
for $v\mid p$, $\pi_v$ is Iwahori spherical, which is equivalent
to the requirement that $\rho_{\pi,p}|_{G_{F_v}}$ be
semistable. For each such $v$ let $D_v\subset
\D_{\st}(V_{2n,v})$ be the regular submodule chosen in \S
\ref{sect:regular submodules}.

Under the hypotheses (C1--4), we will compute
$\mathcal{L}(V_{2n}, D)$ in terms of logarithmic derivatives of
analytic Hecke eigenvalues over eigenvarieties. We will assume
the existence of a rigid analytic space $\mathcal{E}\to
\mathcal{W}$ which is \'etale at a weight $w_0$ over which one
has a point $z_0\in \mathcal{E}$ such that
$z_0\circ\rho_{\mathcal{E},p}\cong
\psi\otimes\Sym^m\rho_{\pi,p}$ for some Hecke character $\psi$
and some $m\geq n$. We will moreover assume that $z_0$ corresponds to the $p$-stabilization of $\Pi_v$ coming from the $p$-stabilization of $\pi_v$ that gave rise to the regular submodule $D_v$. Assume there exists a global analytic
triangulation of
$\mathscr{D}_{\rig}^\dagger(\rho_{\mathcal{E},p}|_{G_{F_v}})$
with graded pieces $\mathcal{R}(\delta_i)$. 

\begin{lemma}\label{l:derivative galois}
Let $\overrightarrow{u}$ be a direction in $\mathcal{W}$ and let $\nabla_{\overrightarrow{u}}\rho_{\mathcal{E},p}$ be the tangent space to $\rho_{\mathcal{E},p}$ in the $\overrightarrow{u}$-direction, which makes sense under the assumption that $\mathcal{E}\to \mathcal{W}$ is \'etale at $z_0$. Then $c_{\overrightarrow{u}}=(z_0\circ\rho_{\mathcal{E},p})^{-1}\nabla_{\overrightarrow{u}}\rho_{\mathcal{E},p}$ is a cohomology class in $H^1(F, \End(z_0\circ\rho_{\mathcal{E},p}))$ and the natural projection $c_{\overrightarrow{u},n}\in H^1(F, V_{2n})$ lies in fact in the Selmer group $H^1(\{D_v\}, V_{2n})$.
\end{lemma}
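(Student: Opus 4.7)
The plan is to proceed in three steps: first establish the cocycle property via infinitesimal deformation theory, then use Clebsch--Gordan for $\GL(2)$ to decompose $\End(\rho_{\pi_0})$ and extract the $V_{2n}$-component, and finally verify the Selmer condition at each $v\mid p$ using the global triangulation of $\rho_{\mathcal{E},p}$.

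For step one, since $\mathcal{E}\to\mathcal{W}$ is étale at $z_0$, the direction $\overrightarrow{u}$ lifts uniquely to a tangent vector at $z_0$, so restricting $\rho_{\mathcal{E},p}$ to $\mathrm{Spec}(L[\epsilon]/\epsilon^2)\hookrightarrow\mathcal{E}$ yields a deformation $\widetilde{\rho}:G_F\to\GL(L[\epsilon])$ of $\rho_{\pi_0}:=z_0\circ\rho_{\mathcal{E},p}$. Writing $\widetilde{\rho}(g)=\rho_{\pi_0}(g)(1+\epsilon\,c_{\overrightarrow{u}}(g))$, the homomorphism property of $\widetilde{\rho}$ modulo $\epsilon^2$ is exactly the 1-cocycle condition for $c_{\overrightarrow{u}}:G_F\to\End(\rho_{\pi_0})$. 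For step two, the Clebsch--Gordan decomposition (cf.\ the appendix) gives a Galois-equivariant splitting
\[\End(\rho_{\pi_0})\cong\End(\Sym^m\rho_{\pi,p})\cong\bigoplus_{j=0}^m\Sym^{2j}\rho_{\pi,p}\otimes{\det}^{-j}\rho_{\pi,p}=\bigoplus_{j=0}^m V_{2j},\]
and $c_{\overrightarrow{u},n}\in H^1(F,V_{2n})$ is just the projection onto the $j=n$ summand.

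Step three is the crux. By definition, $c_{\overrightarrow{u},n}$ lies in $H^1(\{D_v\},V_{2n})$ exactly when, for each $v\mid p$, the restriction $c_{\overrightarrow{u},n}|_{G_{F_v}}$ maps into $H^1(W_v)/H^1_f(W_v)$ modulo $H^1_f(F_v,V_{2n,v})$; equivalently, after passing to Herr complexes via $\Drig$, the extension class of $c_{\overrightarrow{u},n}|_{G_{F_v}}$ should arise from $F_1\Drig(V_{2n,v})$. The global triangulation of $\Drig(\rho_{\mathcal{E},p}|_{G_{F_v}})$ furnished by Corollaries \ref{c:hmf triangulation}, \ref{c:gsp4 triangulation}, \ref{c:gsp triangulation} or \ref{c:unitary triangulation} (depending on which ambient group was used) equips the whole analytic family with an increasing filtration whose graded pieces are the $\mathcal{R}(\delta_i)$. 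Its tensor square induces a compatible filtration on $\Drig(\End(\rho_{\mathcal{E},p})|_{G_{F_v}})$, and I would show that projecting onto the $V_{2n}$-summand recovers precisely $F_\bullet\Drig(V_{2n,v})$ attached to $D_v$ in \S\ref{sect:regular submodules}; this matches because the choice of $p$-stabilization giving the ordering $(e_1,e_2)$ was dictated by $D_v$ in the first place. Since the triangulation exists over the family (not merely at $z_0$), the infinitesimal deformation $\nabla_{\overrightarrow{u}}\rho_{\mathcal{E},p}$ respects the filtration, so $c_{\overrightarrow{u}}|_{G_{F_v}}$ defines an extension of $\Drig$-modules compatible with the filtration, and its $V_{2n}$-projection is represented by an extension class lying in $H^1(F_1\Drig(V_{2n,v}))$.

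The main obstacle is the bookkeeping in step three: translating the explicit data of the triangulation (the characters $\delta_i$ indexed in a particular order by the $p$-stabilization) into the filtration $F_\bullet\Drig(V_{2n,v})$ determined combinatorially by the regular submodule $D_v=\langle f_n,\ldots,f_1\rangle$. The filtration-preservation of $c_{\overrightarrow{u}}$ is essentially formal once the triangulation is known to extend over the family, but verifying that the two filtrations on $\Drig(V_{2n,v})$ (the one induced from the triangulation of $\End$, and the one defined from $D_v$) actually coincide requires carefully matching the ordering conventions in each of the four eigenvariety setups (Hilbert, $\GSp(4)$, $\GSp(2g)$, and unitary).
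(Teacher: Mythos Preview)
Your proposal is correct and follows essentially the same route as the paper. Two remarks.

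First, your ``main obstacle'' in step three is precisely what the paper dispatches with a single appeal to Proposition~\ref{p:End to Sym^n}. Since the triangulation extends over the family, $c_{\overrightarrow{u}}|_{G_{F_v}}$ is upper-triangular in the basis $(g_{m,i})$ of $\Sym^m$; the proposition then shows, uniformly in $m$ and $n$, that an upper-triangular endomorphism projects to $V_{2n}$ with vanishing $g_{2n,u}$-coordinates for all $u>n$, i.e.\ the image lies in the span of $f_n,\ldots,f_0$, which is exactly $F_1\Drig(V_{2n,v})$ by the choice of $D_v$ in \S\ref{sect:regular submodules}. So rather than matching two filtrations case by case across the four eigenvariety setups, one explicit plethysm computation handles all of them at once.

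Second, a small omission: membership in $H^1(\{D_v\},V_{2n})$ presupposes $c_{\overrightarrow{u},n}\in H^1(G_{F,S},V_{2n})$, so you should also note that the class is unramified at $v\notin S\cup\{w\mid p\}$. The paper points this out and cites \cite[Lemma~1.3]{hida:mazur-tate-teitelbaum} for the argument.
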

\begin{proof}
Note that
$\End(z_0\circ\rho_{\mathcal{E},p})\cong\End(\psi\otimes\Sym^m\rho_{\pi,p})\cong\oplus_{i=0}^m
V_{2i}$ and the natural projection on cohomology arises from the
natural projection of this representation to $V_{2n}$.

One needs to check two things. The first, that the cohomology
classes are unramified at $v\notin S\cup\{w\mid p\}$ follows along the same lines as \cite[Lemma 1.3]{hida:mazur-tate-teitelbaum}. The second is that the image of the cohomology class in $H^1(F_v,V_{2n})/H^1_f(F_v, V_{2n})$ lands in $H^1(F_1\Drig(V_{2n,v}))/H^1_f(F_v, V_{2n})$. But Proposition \ref{p:End to Sym^n} implies that the natural projection $c_{\overrightarrow{u},n}$ (in the notation of this lemma) lies entirely in the span of $e_1^ie_2^{n-i}$ for $2i\leq n$. By the choice of regular submodular $D_v$, this implies that $\res_vc_{\overrightarrow{u},n}\in H^1(F_1\Drig(V_{2n,v}))$, as desired.
\end{proof}

\begin{proposition}\label{p:l-invariant formula}
In the notation of the previous lemma, suppose
$\mathscr{D}_{\rig}^\dagger(\rho_{\mathcal{E},p}|_{G_{F_v}})$
has an analytic triangulation with graded pieces
$\delta_1,\ldots, \delta_{m+1}$ where $\delta_i(u)=u^{\kappa_i}$
and $\delta_i(p)=p^{\kappa_i}F_i$. Then
\[\mathcal{L}(V_{2n}, \{D_v\})=\prod_{v\mid p}\left(-\frac{\sum_i B_{m,n,i-1}(\nabla_{\overrightarrow{u}}F_i)/F_i}{\sum_i B_{m,n,i-1}\kappa_i}\right)\]
as long as this formula makes sense. Here, the coefficients $B_{m,n,i}$ are given in Remark \ref{r:Bnki} and are basically alternating binomial coefficients multiplied by inverse Clebsch--Gordon coefficients.
\end{proposition}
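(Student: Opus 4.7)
The plan is to apply Lemma~\ref{l:rank 1 L invariant} to the cohomology class $c_{\overrightarrow{u},n}\in H^1(\{D_v\}, V_{2n})$ produced by Lemma~\ref{l:derivative galois}. Concretely, for each $v\mid p$ I need to compute the image of $c_{\overrightarrow{u},n}$ under the composition
\[
H^1(F, V_{2n})\xrightarrow{\res_v} H^1(F_v, V_{2n})\cong H^1(\Drig(V_{2n,v}))\to H^1(F_1\Drig(V_{2n,v})/F_{-1}\Drig(V_{2n,v}))\to H^1(\gr_1\Drig(V_{2n,v}))
\]
and read off its coordinates $(a_v,b_v)$ in Benois' basis $x=(-1,0)$, $y=(0,\log_p\chi(\gamma))$ for $H^1(\mathcal{R}_L)$; the formula for $\mathcal{L}(V_{2n},\{D_v\})$ then drops out of Lemma~\ref{l:rank 1 L invariant} as $\prod_v a_v/b_v$.

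First I would use the global triangulation of $\mathscr{D}_{\rig}^\dagger(\rho_{\mathcal{E},p}|_{G_{F_v}})$ to induce a compatible triangulation of $\mathscr{D}_{\rig}^\dagger(\End(\rho_{\mathcal{E},p}|_{G_{F_v}}))$; its \emph{diagonal} graded pieces are all trivial rank-$1$ modules $\mathcal{R}(\delta_i/\delta_i)=\mathcal{R}$, and the $(\varphi,\Gamma)$-deformation corresponding to $\nabla_{\overrightarrow{u}}\rho_{\mathcal{E},p}$ lands, in the $i$-th diagonal piece, in the class classified by the tangent vector to the family of characters $z\mapsto\delta_{i,z}$ at $z_0$. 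Since $\delta_{i,z}(p)=p^{\kappa_i(z)}F_i(z)$ and $\delta_{i,z}(u)=u^{\kappa_i(z)}$ for $u\in\ZZ_p^\times$, a direct calculation using the Herr complex and the explicit isomorphism $i(x,y)=\cl(-x,y\log\chi(\gamma))$ from Benois' Proposition~1.5.9 identifies the corresponding class in $H^1(\mathcal{R})$ as $-(\nabla_{\overrightarrow{u}}F_i/F_i)\,x+(\nabla_{\overrightarrow{u}}\kappa_i)\,y$: the $p$-direction of $\delta_i$ contributes to $H^1_c$, while the $\ZZ_p^\times$-direction contributes to $H^1_f$.

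Second, I would invoke the plethysm of Proposition~\ref{p:End to Sym^n} from the appendix to decompose $\End(\Sym^m\rho)\cong\bigoplus_{j=0}^{m}V_{2j}$ and, in particular, to make explicit the natural projection $\End(z_0\circ\rho_{\mathcal{E},p})\onto V_{2n}$ followed by projection onto the line spanned by $f_0=e_1^n e_2^n$, which is precisely a basis of $\gr_1\Drig(V_{2n,v})\cong\mathcal{R}_L$ by the choice of regular submodule in \S\ref{sect:regular submodules}. The key input is that this projection, restricted to the diagonal part $\bigoplus_i \mathcal{R}(\delta_i/\delta_i)$ of $\End$, sends the $i$-th diagonal basis vector to $f_0$ with inverse Clebsch--Gordan coefficient $B_{m,n,i-1}$. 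Summing over $i$ and using linearity of the Herr-complex computation above, the image of $c_{\overrightarrow{u},n}$ in $H^1(\gr_1\Drig(V_{2n,v}))$ equals
\[
\Bigl(-\sum_i B_{m,n,i-1}\,\frac{\nabla_{\overrightarrow{u}}F_i}{F_i}\Bigr)\,x\;+\;\Bigl(\sum_i B_{m,n,i-1}\,\nabla_{\overrightarrow{u}}\kappa_i\Bigr)\,y,
\]
whence $a_v$ and $b_v$ are identified as the numerator and denominator of the claimed formula (after noting that $\nabla_{\overrightarrow{u}}\kappa_i$ coincides with the value $\kappa_i(\overrightarrow{u})$ by linearity of the Hodge--Tate weights in the weight parameters). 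Lemma~\ref{l:rank 1 L invariant} then yields $\mathcal{L}(V_{2n},\{D_v\})=\prod_v a_v/b_v$, matching the proposition.

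The main obstacle is the second step: correctly identifying the projection $\End(\rho_{\mathcal{E},p})\to V_{2n}$ on the diagonal graded pieces of the induced triangulation with the inverse Clebsch--Gordan coefficients $B_{m,n,i-1}$. This is a purely representation-theoretic plethysm computation for $\GL(2)$, but it must be carried out with the correct normalizations so that the coefficients appearing in the cohomological formula really are the $B_{m,n,i}$ from the appendix; the remaining steps are essentially formal manipulations with the Herr complex and Lemma~\ref{l:rank 1 L invariant}. The phrase ``as long as this formula makes sense'' in the statement accommodates the non-degeneracy assumption $b_v\neq 0$ from Lemma~\ref{l:rank 1 L invariant}, i.e.\ that the denominator is nonzero so that $c_{\overrightarrow{u},n}$ indeed projects non-trivially to $H^1_c$ in each graded piece considered.
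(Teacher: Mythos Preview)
Your approach is essentially identical to the paper's: invoke Lemma~\ref{l:rank 1 L invariant}, use the analytic triangulation to identify the diagonal contributions $(\nabla_{\overrightarrow{u}}\delta_i)/\delta_i$, and project to $\gr_1\Drig(V_{2n,v})$ via the plethysm of Proposition~\ref{p:End to Sym^n}; the paper's proof is simply more telegraphic. One small slip: in your parenthetical ``the $p$-direction of $\delta_i$ contributes to $H^1_c$, while the $\ZZ_p^\times$-direction contributes to $H^1_f$'' the roles are swapped---deforming $\delta_i(p)$ alters $\varphi$ and gives a crystalline class in $H^1_f$ (the $x$-component), while deforming $\delta_i|_{\ZZ_p^\times}$ alters $\Gamma$ and lands in $H^1_c$ (the $y$-component)---but since your displayed formula already has the coefficients in the right slots this does not affect the argument. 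Your observation that the denominator should really read $\sum_i B_{m,n,i-1}\nabla_{\overrightarrow{u}}\kappa_i$ (equivalently $\kappa_i(\overrightarrow{u})$ by linearity) is correct and matches how the formula is actually used in Theorems~\ref{t:l invariant formula hmf}--\ref{t:l invariant formula unitary}.
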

\begin{proof}
Lemma \ref{l:rank 1 L invariant} implies that
$\mathcal{L}(V_{2n}, \{D_v\})=\prod_{v\mid p} (a_v/b_v)$ where
the projection $c_{\overrightarrow{u}, v}$ to
$H^1(\gr_1\Drig(V_{2n,v}))\cong H^1(\mathcal{R})$ is written as
$a_v(-1,0)+b_v(0,\log_p\chi(\gamma))$. Explicitly,
\[\frac{a_v}{b_v}=\frac{c_{\overrightarrow{u},
    v}(p)}{c_{\overrightarrow{u}, v}(u)/-\log_p(u)}\]
for $u\in \ZZ_p^\times$. But Proposition \ref{p:End to Sym^n}
implies that
\[c_{\overrightarrow{u}, v}=\sum
B_{m,n,i}(\nabla_{\overrightarrow{u}}\delta_i)/\delta_i\]
The result now follows from the fact that $\delta_i(p)=F_i$, $\delta_i(u)=u^{\kappa_i}$ and $(\nabla_{\overrightarrow{u}}u^{\kappa})/u^{\kappa} = \nabla_{\overrightarrow{u}}\kappa \log_p(u)$.
\end{proof}

In the remaining sections we apply Proposition
\ref{p:l-invariant formula} to obtain explicit formulae for
$\mathcal{L}$-invariants for relevant symmetric powers in terms
of logarithmic derivatives of analytic Hecke eigenvalues.

We will assume that $F$ is a totally real field in which $p$ splits completely and $\pi$ is a cohomological Hilbert modular form with infinity type $(k_1,\ldots, k_d;w)$. At $v\mid p$ we assume that $\pi_v$ is Iwahori spherical. In the computation of the $\mathcal{L}$-invariant of $V_{2n}$ we will assume that $H^1_f(F, V_{2n})=0$. Throughout we will consider the refinement of $\pi$ corresponding to the ordering $e_1,e_2$ of the basis of $\D_{\st}(\rho_{\pi,p}|_{G_{F_v}})$, which gives a suitable refinement of any automorphic form equivalent to $\Sym^m \pi$ using the ordering $e_1^m, e_1^{m-1}e_2, \ldots, e_2^m$. We will assume that $V_{2n,v}$ satisfies condition (C4). 

\subsection{Symmetric squares}\label{sect:sym2l}
Suppose for $v\mid p$ such that $\pi_v$ is unramified that the
two Satake parameters are distinct. Let $\mathcal{E}$ be the eigenvariety from Lemma \ref{l:hmf eigenvariety}. Suppose that $\mathcal{E}$ is \'{e}tale over the weight space at the chosen refinement of $\pi_v$.

\begin{theorem}\label{t:l invariant formula hmf}Writing $a'_v$ for the derivative in the direction $(1,\ldots, 1; -1)$ in the weight space we compute
\[\mathcal{L}(V_2, \{D_v\}) = \prod_{v\mid p}
\left(\frac{-2a_v'}{a_v}\right)\]
\end{theorem}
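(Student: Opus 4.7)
The plan is to specialize Proposition~\ref{p:l-invariant formula} to the case $m=n=1$. The ambient eigenvariety is the Hilbert eigenvariety $\mathcal{E}$ of Lemma~\ref{l:hmf eigenvariety} carrying the rank $2$ analytic Galois representation $\rho_{\mathcal{E}}$; the assumption that $\mathcal{E}\to\mathcal{W}$ is \'etale at the chosen refinement of $\pi$ lets us form the directional derivative $\nabla_{\overrightarrow{u}}\rho_{\mathcal{E}}$ of Lemma~\ref{l:derivative galois}. Since $\End(\rho_{\pi,p}) \cong V_2\oplus V_0$, the trace-zero projection of $c_{\overrightarrow{u}}=(z_0\circ\rho_{\mathcal{E}})^{-1}\nabla_{\overrightarrow{u}}\rho_{\mathcal{E}}$ lands in $H^1(\{D_v\}, V_2)$, at which point Lemma~\ref{l:rank 1 L invariant} applies.

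The triangulation data is read off Corollary~\ref{c:hmf triangulation}: one has $\delta_1(u) = u^{\kappa_1}$ with $\kappa_1 = (w-k_v)/2$, $\delta_1(p) = F_1 = a_v^{-1}p^{3/2}$, and $\delta_2(u) = u^{\kappa_2}$ with $\kappa_2 = (w+k_v-2)/2$, $\delta_2(p) = F_2 = a_v p^{-1/2}$. For the direction $\overrightarrow{u} = (1,\ldots,1;-1)$ I would record the four derivatives
\[
\nabla_{\overrightarrow{u}}\kappa_1 = -1,\qquad \nabla_{\overrightarrow{u}}\kappa_2 = 0,\qquad \frac{\nabla_{\overrightarrow{u}}F_1}{F_1} = -\frac{a_v'}{a_v},\qquad \frac{\nabla_{\overrightarrow{u}}F_2}{F_2} = \frac{a_v'}{a_v}.
\]

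Finally one needs the inverse Clebsch--Gordan coefficients $B_{1,1,0}, B_{1,1,1}$ governing the projection of $\End(\rho_{\pi,p})$ onto the middle graded piece of the trianguline filtration on $V_2\cong\sl_2$. In this rank-one case they are, up to a harmless common factor, $1$ and $-1$, reflecting the fact that the zero-weight subspace of $\sl_2$ is spanned by $\operatorname{diag}(1,-1)$; this is either extracted from the appendix's explicit formula or verified directly from the trace-zero projection $\tfrac{1}{2}(\nabla\delta_1/\delta_1 - \nabla\delta_2/\delta_2)$. Substitution into Proposition~\ref{p:l-invariant formula} then gives
\[
\mathcal{L}(V_2, \{D_v\}) = \prod_{v\mid p}\left(-\frac{(-a_v'/a_v) - (a_v'/a_v)}{(-1)-0}\right) = \prod_{v\mid p}\left(\frac{-2a_v'}{a_v}\right).
\]
The main obstacle in this low-rank case is really upstream: one has to verify that the regular submodule $D_v$ fixed in \S\ref{sect:regular submodules} matches the ordering $e_1,e_2$ encoded in the refinement of $\pi_v$ used to build the eigenvariety, and that the resulting triangulation of $\rho_{\mathcal{E}}$ induces the correct identification $\gr_1\Drig(V_{2,v}) \cong \mathcal{R}_L$. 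Once these compatibilities are checked, the computation above is forced.
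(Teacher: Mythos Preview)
Your proposal is correct and follows essentially the same approach as the paper: specialize Proposition~\ref{p:l-invariant formula} with $m=n=1$, feed in the triangulation data $\kappa_1,\kappa_2,F_1,F_2$ from Corollary~\ref{c:hmf triangulation}, and use $B_{1,1,0}=1$, $B_{1,1,1}=-1$. The paper's proof is terser but identical in substance; your explicit recording of the four derivatives $\nabla_{\overrightarrow{u}}\kappa_i$ and $\widetilde{\nabla}_{\overrightarrow{u}}F_i$ in the direction $(1,\ldots,1;-1)$ just spells out the substitution.
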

\begin{proof}
Recall from Corollary \ref{c:hmf triangulation} that $\kappa_1=(w-k_v)/2$, $\kappa_2=(w+k_v-2)/2$, $F_1=a_v^{-1}p^{3/2}$ and $F_2=a_vp^{1/2}$. The result now follows directly from Proposition \ref{p:l-invariant formula} and the fact that $B_{1,1,0}=1$ and $B_{1,1,1}=-1$.
\end{proof}

\subsection{Symmetric sixth powers}\label{sect:sym6l}
Assume that $\pi$ is not CM. Suppose for $v\mid p$ such that
$\pi_v$ is unramified that $\alpha_v/\beta_v\notin\mu_{60}$. Let
$\Pi$ be a suitably twisted Ramakrishnan--Shahidi lift of $\Sym^3\pi$ such that the chosen refinement has noncritical slope (cf. Theorem \ref{t:GL(4) to GSp(4)} and Lemma \ref{l:siegel-hilbert noncritical slope}). Let $\mathcal{E}$ be Urban's eigenvariety for $\GSp(4)$ and let $a_{v,1}$ and $a_{v,2}$ be the analytic Hecke eigenvalues from the proof of Lemma \ref{l:gsp eigenvariety}. Suppose that the eigenvariety $\mathcal{E}$ is \'{e}tale over the weight space at the chosen refinement of $\Pi$. 

\begin{theorem}\label{t:l invariant formula gsp4}
If $\overrightarrow{u}=(u_1,u_2; u_0)$ is any direction in
the weight space, i.e. $u_1\geq u_2\geq 0$, such that the denominator below is non-zero,
then
\[\mathcal{L}(V_6, \{D_v\})=\prod_{v\mid p}
\left(\frac{-4\widetilde{\nabla}_{\overrightarrow{u}}a_{v,2}+3\widetilde{\nabla}_{\overrightarrow{u}}a_{v,1}}{u_1-2u_2}\right)\]
where we write $\widetilde{\nabla}_{\overrightarrow{u}}f =
(\nabla_{\overrightarrow{u}}f)/f$. 
\end{theorem}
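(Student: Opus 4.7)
The plan is to specialize Proposition \ref{p:l-invariant formula} to the case $(m,n)=(3,3)$, applied to the Ramakrishnan--Shahidi lift $\Pi$ constructed in Theorem \ref{t:GL(4) to GSp(4)}. By that theorem $\rho_{\Pi,\spin,p}$ is a twist of $\Sym^3\rho_{\pi,p}$, so $V_6$ is a direct summand of $\End(\rho_{\Pi,\spin,p})\cong V_0\oplus V_2\oplus V_4\oplus V_6$. Under the stated hypotheses, Lemma \ref{l:gsp eigenvariety} supplies the analytic family $\rho_{\mathcal{E},\spin}$, Corollary \ref{c:gsp4 triangulation} supplies its global triangulation with graded pieces $\mathcal{R}(\delta_i)$ satisfying $\delta_i(u)=u^{\kappa_i}$ and $\delta_i(p)=F_i$, and the \'etaleness of $\mathcal{E}\to\mathcal{W}$ together with Lemma \ref{l:derivative galois} produce, for any direction $\overrightarrow{u}=(u_1,u_2;u_0)$ in the weight space, a Selmer class $c_{\overrightarrow{u},3}\in H^1(\{D_v\},V_6)$.

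The next step is to compute the image of $c_{\overrightarrow{u},3}$ in $H^1(\gr_1\Drig(V_{6,v}))$ for each $v\mid p$ using the plethysm identity of Proposition \ref{p:End to Sym^n} in the appendix, and to plug the result into Lemma \ref{l:rank 1 L invariant}. This is exactly the mechanism of the proof of Proposition \ref{p:l-invariant formula}, and it yields
\[\mathcal{L}(V_6,\{D_v\}) \;=\; \prod_{v\mid p}\left(-\frac{\sum_{i=1}^{4} B_{3,3,i-1}\,\widetilde{\nabla}_{\overrightarrow{u}}F_{i}}{\sum_{i=1}^{4} B_{3,3,i-1}\,\nabla_{\overrightarrow{u}}\kappa_{i}}\right).\]
Reading the $F_i$'s and $\kappa_i$'s off the proof of Lemma \ref{l:gsp eigenvariety}, the constants in the exponents of $p$ drop out upon logarithmic differentiation, so $\widetilde{\nabla}_{\overrightarrow{u}}F_1=-\widetilde{\nabla}_{\overrightarrow{u}}a_{v,2}$, $\widetilde{\nabla}_{\overrightarrow{u}}F_2=\widetilde{\nabla}_{\overrightarrow{u}}a_{v,2}-\widetilde{\nabla}_{\overrightarrow{u}}a_{v,1}$, and similarly for $F_3$ and $F_4$; meanwhile each $\nabla_{\overrightarrow{u}}\kappa_i$ is a signed half-sum $(u_0\pm u_1\pm u_2)/2$ read off the Hodge--Tate weights of the refinement.

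To finish it suffices to substitute the values of $B_{3,3,i}$ provided by the appendix and simplify. Up to a common scalar (which cancels in the ratio), the numerator collapses to $-4\widetilde{\nabla}_{\overrightarrow{u}}a_{v,2}+3\widetilde{\nabla}_{\overrightarrow{u}}a_{v,1}$ and the denominator to $u_1-2u_2$. The principal obstacle is the plethysm bookkeeping in the appendix: one must identify the projection $\End(\Sym^3)\onto V_6$ in the basis adapted to the refinement so that the coefficients $B_{3,3,i}$ are pinned down. Every other ingredient --- the existence and refinement of $\rho_{\mathcal{E},\spin}$, the regularity of the triangulation (for which the assumption $\alpha_v/\beta_v\notin\mu_{60}$ is crucial), the non-critical slope arranged by a twist by $|\cdot|^m$, and the Greenberg--Benois axioms (C1)--(C5) --- has been put in place in the preceding sections.
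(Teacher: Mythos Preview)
Your proposal is correct and follows essentially the same approach as the paper: specialize Proposition~\ref{p:l-invariant formula} to $(m,n)=(3,3)$ on the $\GSp(4)$ eigenvariety, read off $\widetilde{\nabla}_{\overrightarrow{u}}F_i$ and $\nabla_{\overrightarrow{u}}\kappa_i$ from the proof of Lemma~\ref{l:gsp eigenvariety}, and substitute $(B_{3,3,i})_i\sim(1,-3,3,-1)$ from Remark~\ref{r:Bnki}. The paper's proof is in fact terser than yours, simply recording these values and asserting the simplification.
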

\begin{proof}
Recall from Corollary \ref{c:gsp4 triangulation} that
$\kappa_1=(\mu_0-\mu_{v,1}-\mu_{v,2})/2$,
$\kappa_2=(\mu_0-\mu_{v,1}+\mu_{v,2})/2+1$,
$\kappa_3=(\mu_0+\mu_{v,1}-\mu_{v,2})/2+2$,
$\kappa_4=(\mu_0+\mu_{v,1}+\mu_{v,2})/2+3$ giving
$\nabla_{\overrightarrow{u}}\kappa_1=(u_0-u_1-u_2)/2$,
$\nabla_{\overrightarrow{u}}\kappa_2=(u_0-u_1+u_2)/2$,
$\nabla_{\overrightarrow{u}}\kappa_3=(u_0+u_1-u_2)/2$,
$\nabla_{\overrightarrow{u}}\kappa_4=(u_0+u_1+u_2)/2$. Similarly,
$\widetilde{\nabla}_{\overrightarrow{u}}F_1=-\widetilde{\nabla}_{\overrightarrow{u}}a_{v,2}$,
$\widetilde{\nabla}_{\overrightarrow{u}}F_2=\widetilde{\nabla}_{\overrightarrow{u}}a_{v,2}-\widetilde{\nabla}_{\overrightarrow{u}}a_{v,1}$,
$\widetilde{\nabla}_{\overrightarrow{u}}F_3=\widetilde{\nabla}_{\overrightarrow{u}}a_{v,1}-\widetilde{\nabla}_{\overrightarrow{u}}a_{v,2}$,
$\widetilde{\nabla}_{\overrightarrow{u}}F_4=\widetilde{\nabla}_{\overrightarrow{u}}a_{v,2}$. Using
that $(B_{3,3,i})_i\sim (1,-3,3,-1)$ we deduce the formula.
\end{proof}

\subsection{Symmetric powers via symplectic groups}\label{sect:symgspl}
We remark that the results of this paragraph are conditional on the stabilization of the twisted trace formula (cf. Theorem \ref{t:galois gsp}). 

Assume that $\pi$ is not CM. Suppose for $v\mid p$ such that
$\pi_v$ is unramified that
$\alpha_v/\beta_v\notin\mu_{\infty}$. Suppose $\pi$ satisfies the hypotheses of Theorem \ref{t:automorphy of sym} (2) and let
$\Pi$ be a suitable (as before, from Lemma \ref{l:siegel-hilbert noncritical slope}) twist of the cuspidal representation of $\GSp(2n, \AA_F)$ from Theorem \ref{t:GL(2n+1) to Sp(2n)}. Let $\mathcal{E}$ be Urban's eigenvariety for $\GSp(2n)$ and let $a_{v,i}$ be the analytic Hecke eigenvalues from the proof of Lemma \ref{l:gsp eigenvariety}. Suppose that the eigenvariety $\mathcal{E}$ is \'{e}tale over the weight space at the chosen refinement of $\Pi$. 

\begin{theorem}\label{t:l invariant formula gsp}
If $\overrightarrow{u}=(u_1,\ldots,u_n;u_0)$ is any direction in
the weight space, such that the denominator below is non-zero, then
\[\mathcal{L}(V_{4n-2}, \{D_v\})=\prod_{v\mid
  p}-\left(\frac{B_{n}\widetilde{\nabla}_{\overrightarrow{u}}a_{v,1}+B_{1}(\widetilde{\nabla}_{\overrightarrow{u}}a_{v,n-1}-2\widetilde{\nabla}_{\overrightarrow{u}}a_{v,n})+\sum_{i=2}^{n-1}B_{i}(\widetilde{\nabla}_{\overrightarrow{u}}a_{v,i-1}-\widetilde{\nabla}_{\overrightarrow{u}}a_{v,i})}{\sum_{i=1}^nu_iB_{n+1-i}}\right)\]
where we write $B_i=(-1)^i\binom{2n}{n+i}i$.
\end{theorem}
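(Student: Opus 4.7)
The plan is to read off a formula for $\mathcal{L}(V_{4n-2},\{D_v\})$ by applying Proposition \ref{p:l-invariant formula} to the standard analytic Galois representation on Urban's eigenvariety for $\GSp(2n)$, and then simplify using the explicit triangulation of Lemma \ref{l:gsp eigenvariety} and the closed-form expression for the inverse Clebsch--Gordan coefficients provided in the appendix.

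I will first transfer $\Sym^{2n}\pi$ (cuspidal by Theorem \ref{t:automorphy of sym}(2)) to a cuspidal representation $\Pi$ of $\GSp(2n,\AA_F)$ via Theorem \ref{t:GL(2n+1) to Sp(2n)}, so that $\rho_{\Pi,\std,p}\cong\psi\otimes\Sym^{2n}\rho_{\pi,p}$ for some twist $\psi$; after a further twist by $|\cdot|^m$ as in Lemma \ref{l:siegel-hilbert noncritical slope}, the chosen refinement has noncritical slope. Under the étaleness hypothesis, Lemma \ref{l:gsp eigenvariety} then produces the analytic standard representation $\rho_{\mathcal{E},\std}$ on Urban's eigenvariety $\mathcal{E}$ around $\Pi$, and Corollary \ref{c:gsp triangulation} (invoked under $\alpha_v/\beta_v\notin\mu_\infty$) gives a global triangulation of $\mathscr{D}_{\rig}^\dagger(\rho_{\mathcal{E},\std}|_{G_{F_v}})$ with explicit $\kappa_i,F_i$. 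Combining $H^1_f(F,V_{4n-2})=0$ with Lemma \ref{l:derivative galois}, the $V_{4n-2}$-projection of the tangent class $(z_0\circ\rho_{\mathcal{E},\std})^{-1}\nabla_{\overrightarrow{u}}\rho_{\mathcal{E},\std}$ inside $\End(\Sym^{2n}\rho_{\pi,p})\cong\bigoplus_{k=0}^{2n}V_{2k}$ lands in $H^1(\{D_v\},V_{4n-2})$, and Proposition \ref{p:l-invariant formula} (applied with symmetric power $m=2n$ and projection index $2n-1$) then yields
\[
\mathcal{L}(V_{4n-2},\{D_v\})=\prod_{v\mid p}\left(-\frac{\sum_{i=1}^{2n+1}B_{2n,2n-1,i-1}\,\widetilde{\nabla}_{\overrightarrow{u}}F_i}{\sum_{i=1}^{2n+1}B_{2n,2n-1,i-1}\,\nabla_{\overrightarrow{u}}\kappa_i}\right),
\]
with $B_{2n,2n-1,i}$ the inverse Clebsch--Gordan coefficients from Proposition \ref{p:End to Sym^n}.

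From the proof of Lemma \ref{l:gsp eigenvariety} one reads off the symmetries $\kappa_{n+1}=0$, $\kappa_{n+1\pm k}=\pm(\mu_{v,n+1-k}+k)$, $F_{n+1}=1$, and $F_{n+1+k}F_{n+1-k}$ a fixed power of $p$, giving $\nabla_{\overrightarrow{u}}\kappa_{n+1+k}=-\nabla_{\overrightarrow{u}}\kappa_{n+1-k}$ and $\widetilde{\nabla}_{\overrightarrow{u}}F_{n+1+k}=-\widetilde{\nabla}_{\overrightarrow{u}}F_{n+1-k}$. Together with the antisymmetry $B_{2n,2n-1,2n-i}=-B_{2n,2n-1,i}$ and vanishing $B_{2n,2n-1,n}=0$ supplied by the appendix, both sums fold into sums over $j\in\{1,\ldots,n\}$ whose coefficients are the Clebsch--Gordan differences $B_{2n,2n-1,2n+1-j}-B_{2n,2n-1,j-1}$. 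Substituting the explicit formulas for $\widetilde{\nabla}_{\overrightarrow{u}}F_j$ in terms of $\widetilde{\nabla}_{\overrightarrow{u}}a_{v,j-1}$ and $\widetilde{\nabla}_{\overrightarrow{u}}a_{v,j}$ (with the special cases at $j=1$ and $j=n$ accounting for $\widetilde{\nabla}_{\overrightarrow{u}}a_{v,1}$ and $\widetilde{\nabla}_{\overrightarrow{u}}a_{v,n-1}-2\widetilde{\nabla}_{\overrightarrow{u}}a_{v,n}$ respectively) and $\nabla_{\overrightarrow{u}}\kappa_j=-u_j$, the claimed formula emerges after a term-by-term matching.

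The main obstacle is the explicit closed-form evaluation of the Clebsch--Gordan differences $B_{2n,2n-1,2n+1-j}-B_{2n,2n-1,j-1}$, i.e., of the plethysm coefficients for the inclusion $V_{4n-2}\hookrightarrow\End(\Sym^{2n}\rho_{\pi,p})$. This is an explicit computation for $\GL(2)$, analogous to but simpler than the one summarized in the Remark after Theorem \ref{thmD} for the unitary case, and produces an expression proportional to $B_i=(-1)^i\binom{2n}{n+i}i$ up to a common nonzero scalar independent of $i$ that cancels between numerator and denominator. This combinatorial identity, handled in the appendix via Proposition \ref{p:End to Sym^n}, is what allows the abstract formula above to collapse to the form stated in the theorem.
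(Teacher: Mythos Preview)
Your proposal is correct and follows essentially the same approach as the paper: apply Proposition \ref{p:l-invariant formula} with $m=2n$, fold the resulting sum using the symmetries $\nabla_{\overrightarrow{u}}\kappa_{n+1+k}=-\nabla_{\overrightarrow{u}}\kappa_{n+1-k}$ and $\widetilde{\nabla}_{\overrightarrow{u}}F_{n+1+k}=-\widetilde{\nabla}_{\overrightarrow{u}}F_{n+1-k}$ coming from Lemma \ref{l:gsp eigenvariety}, and then evaluate the resulting Clebsch--Gordan differences via the explicit formula $B_{2n,2n-1,i}=(-1)^i(2n)!(2n-1)!\binom{2n}{i}(2n-2i)$ of Remark \ref{r:Bnki}. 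The paper simply packages the folding and evaluation in one line by directly computing $B_{2n,2n-1,n+i}-B_{2n,2n-1,n-i}=(-1)^{n+1}\cdot 2\cdot(-1)^i\binom{2n}{n+i}i$, which (after reindexing $i=n+1-j$) matches your differences $B_{2n,2n-1,2n+1-j}-B_{2n,2n-1,j-1}$.
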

\begin{remark}
Given that Remark \ref{r:Bnki} gives explicit values for the $B_{n,k,i}$, we know there will be directions where the denominator is non-zero.
\end{remark}

\begin{proof}
Combine the formulae for the triangulation of the standard
Galois representation from the proof of Lemma \ref{l:gsp
  eigenvariety} with Proposition \ref{p:l-invariant
  formula}. Finally, compute
\begin{align*}
B_i&=B_{2n,2n-1,n+i}-B_{2n,2n-1,n-i}\\
&=(-1)^{n+i}\binom{2n}{n+i}(2n-2(n+i))-(-1)^{n-i}\binom{2n}{n-i}(2n-2(n-i))\\
&=(-1)^{n+1}2 \cdot(-1)^i\binom{2n}{n+i}i
\end{align*}
and the result follows because all the $B_i$ can be scaled by the same factor.
\end{proof}

\subsection{Symmetric powers via unitary groups}\label{sect:symunitaryl}
We remark that the results of this paragraph as well are conditional on the stabilization of the twisted trace formula.

Assume that $\pi$ is not CM. Suppose for $v\mid p$ such that
$\pi_v$ is unramified that
$\alpha_v/\beta_v\notin\mu_{\infty}$. Suppose $\pi$ satisfies
the hypotheses of Theorem \ref{t:automorphy of sym} (2) and Proposition \ref{p:sym hmf to unitary}. Let $E/F$ the CM extension and $\Pi$ the cuspidal representation of $U_{4n}(\AA_F)$ which a transfer of a twist of $\Sym^{4n-1}\pi$ as in Proposition \ref{p:sym hmf to unitary}. Let $\mathcal{E}$ be Chenevier's eigenvariety and let $a_{v,i}$ be the analytic Hecke eigenvalues from the proof of Corollary \ref{c:unitary triangulation}. Suppose that the eigenvariety $\mathcal{E}$ is \'{e}tale over the weight space at the chosen refinement of $\Pi$. 

\begin{theorem}\label{t:l invariant formula unitary}
If $\overrightarrow{u}=(u_1,\ldots,u_n;u_0)$ is any direction in
the weight space, then
\[\mathcal{L}(V_{8n-2}, \{D_v\})=\prod_{v\mid
  p}\left(\frac{-\sum_{i=1}^{4n}(-1)^{i-1}\binom{4n-1}{i-1}\widetilde{\nabla}_{\overrightarrow{u}}a_{v,i}}{\sum_{i=1}^{4n}(-1)^{i-1}\binom{4n-1}{i-1}u_i}\right)\]
and
\[\mathcal{L}(V_{8n-6}, \{D_v\})=\prod_{v\mid
  p}\left(\frac{-\sum_{i=1}^{4n}B_{i-1}\widetilde{\nabla}_{\overrightarrow{u}}a_{v,i}}{\sum_{i=1}^{4n}u_iB_{i-1}}\right)\]
Here $B_i=B_{4n-1,4n-3,i}$ is the inverse Clebsch--Gordan
coefficient of Proposition \ref{p:End to Sym^n}, up to a scalar independent of $i$ given by
\[B_i= (-1)^i\binom{4n-1}{i}((4n-1)^3-(4i+1)(4n-1)^2+(4i^2+2i)(4n-1)-2i^2).\]
\end{theorem}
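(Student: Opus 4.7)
The proof will follow the template of Theorem \ref{t:l invariant formula gsp}, substituting Chenevier's unitary eigenvariety for Urban's symplectic one and carrying out two projections from $\End(\Sym^{4n-1}\rho_{\pi,p})$. First, applying Theorem \ref{t:galois unitary} together with Chenevier's construction produces an analytic family $\rho_{\mathcal{E},p}: G_E \to \GL(4n, \mathcal{O}_\mathcal{E})$ around the point $z_0$ corresponding to $\Pi$, whose specialization at $z_0$ is $\psi \otimes \Sym^{4n-1}\rho_{\pi,p}|_{G_E}$. Corollary \ref{c:unitary triangulation} then supplies at each $w \mid p$ of $E$ a global triangulation with graded pieces $\mathcal{R}(\delta_i)$ where $\delta_i(u) = u^{\kappa_i}$, $\kappa_i = -\mu_{v,i}+i$, and $\delta_i(p) = F_i = p^{(4n-1)/2 - i} a_{v,i}$. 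In particular, $\widetilde{\nabla}_{\overrightarrow{u}} F_i = \widetilde{\nabla}_{\overrightarrow{u}} a_{v,i}$, since the $p$-power prefactor is constant over $\mathcal{E}$.

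Next, given a direction $\overrightarrow{u}$ in weight space, the \'{e}taleness hypothesis permits differentiation to form the tangent cohomology class $c_{\overrightarrow{u}} = \rho_\Pi^{-1} \nabla_{\overrightarrow{u}} \rho_{\mathcal{E},p} \in H^1(E, \End(\rho_\Pi))$. Because $\End$ kills characters, the twist by $\psi$ is irrelevant, and the Clebsch--Gordan decomposition gives
\[\End(\Sym^{4n-1}\rho_{\pi,p}) \cong \bigoplus_{k=0}^{4n-1} V_{2k}.\]
Projecting $c_{\overrightarrow{u}}$ onto the $V_{2(4n-1)} = V_{8n-2}$ and $V_{2(4n-3)} = V_{8n-6}$ summands, and invoking the analogue of Lemma \ref{l:derivative galois} over $E$ together with the identification $H^1(F,V_{2k}) \hookrightarrow H^1(E, V_{2k}|_{G_E})$ (combined with the hypothesis that (C1)--(C4) hold over $F$), yields Selmer classes in $H^1(\{D_v\}, V_{2k})$. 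Proposition \ref{p:l-invariant formula} with $m = 4n-1$ then expresses each $\mathcal{L}(V_{2k}, \{D_v\})$ as a ratio of weighted sums of $\widetilde{\nabla}_{\overrightarrow{u}} a_{v,i}$ (numerator) and $u_i$ (denominator), the weights being the inverse Clebsch--Gordan coefficients $B_{4n-1, k, i-1}$; the overall signs follow from $\nabla_{\overrightarrow{u}} \kappa_i = -u_i$ and the sign convention in Proposition \ref{p:l-invariant formula}.

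The remaining task is to evaluate the coefficients $B_{4n-1, k, i}$ explicitly for $k = 4n-1$ and $k = 4n-3$; this is the content of the appendix. For the top summand $V_{8n-2}$ (i.e., $k = m$), the projection from $\End(\Sym^m)$ onto $V_{2m}$ is essentially the natural symmetrization, and a direct calculation yields $B_{4n-1, 4n-1, i} \propto (-1)^i \binom{4n-1}{i}$; substitution then gives the first displayed formula. For $V_{8n-6}$ (the case $k = m - 2$), extracting the projection requires the explicit plethystic decomposition carried out in Proposition \ref{p:End to Sym^n} and Remark \ref{r:Bnki}, producing the cubic-in-$i$ expression for $B_i$ stated in the theorem. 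This explicit plethysm is the main technical obstacle: while the decomposition of $\End(\Sym^m)$ into $V_{2k}$'s is classical, obtaining a usable closed form for the projection coefficients---in particular, the appearance of the cubic polynomial $(4n-1)^3 - (4i+1)(4n-1)^2 + (4i^2+2i)(4n-1) - 2i^2$---is the key combinatorial computation, and it is to this that the appendix is devoted.
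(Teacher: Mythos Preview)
Your outline follows the paper's approach and correctly identifies the key ingredients: the unitary triangulation from Corollary \ref{c:unitary triangulation}, the plethysm from the appendix, and the general formula in Proposition \ref{p:l-invariant formula}. There is, however, one genuine gap.

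The cohomology class $c_{\overrightarrow{u}}$ you construct lives in $H^1(G_{E,S_E}, V_{2k})$, not $H^1(G_{F,S}, V_{2k})$, because the analytic Galois representation on Chenevier's eigenvariety is a representation of $G_E$. But the $\mathcal{L}$-invariant $\mathcal{L}(V_{2k}, \{D_v\})$ is defined via $\Ind_F^\QQ V_{2k}$ and requires a class in $H^1(\{D_v\}, V_{2k}) \subset H^1(G_{F,S}, V_{2k})$. You write ``the identification $H^1(F,V_{2k}) \hookrightarrow H^1(E, V_{2k}|_{G_E})$'', but this is the \emph{restriction} map: it goes from $F$ to $E$, not from $E$ to $F$, so it does not by itself explain why your class over $E$ descends to $F$.

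The paper fills this gap as follows. Since $E/F$ is quadratic and $V_{2k}$ is a characteristic-zero vector space, inflation--restriction gives $H^1(G_{F,S},V_{2k})\cong H^1(G_{E,S_E},V_{2k})^{\Gal(E/F)}$. One then checks that $c_{\overrightarrow{u}}$ is fixed by the nontrivial element of $\Gal(E/F)$ (complex conjugation acts trivially on $V_{2k}$), so it descends to $H^1(G_{F,S}, V_{2k})$. Finally, because $p$ splits completely in both $E$ and $F$, the local component at $v\mid p$ of the descended class agrees with the local component at $w\mid v$ of the original class, so Proposition \ref{p:l-invariant formula} applies unchanged. Once this descent is in place, the rest of your argument---including the computation $\nabla_{\overrightarrow{u}}\kappa_i = -u_i$ and the explicit $B_{m,k,i}$ values from Remark \ref{r:Bnki}---goes through exactly as you describe.
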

\begin{proof}
Note that we cannot simply appeal to Proposition
\ref{p:l-invariant formula} as the analytic Galois
representation on the unitary eigenvariety is a representation
of $G_E$ and not $G_F$. However, note that since $E/F$ is finite and $V_{2m}$ is a characteristic 0 vector
space, the inflation-restriction sequence gives
$H^1(G_{F,S},V_{2m})\cong H^1(G_{E,S_E},V_{2m})^{G_{E/F}}$. We
have constructed a cohomology class $c_{\overrightarrow{u}}\in
H^1(G_{E,S_E},V_{m})$ and by construction $c_{\overrightarrow{u}}$ is
invariant under the nontrivial element of $\Gal(E/F)$ (complex conjugation acts trivially on $V_{2m}$). Thus it
descends to a cohomology class in $H^1(G_{F,S}, V_{2m})$. Since $p$ splits completely in both $E$ and $F$, this local component of the descended class is the same as that of the original class. This implies that the conclusion of Proposition
\ref{p:l-invariant formula} stays the same and the formulae follow from Corollary \ref{c:unitary triangulation} as before.
\end{proof}

\section{Appendix: Plethysm for $\GL(2)$}\label{sect:plethysm}
Let $V$ denote the standard two-dimensional representation of $\GL(2)$ (or $\SL(2)$) (via matrix multiplication). In this section, we study the decomposition
\[\End\Sym^n V\cong\Sym^nV\otimes(\Sym^nV)^\vee\cong\bigoplus_{k=0}^n\Sym^{2k}V\otimes{\det}^{-k}\]
as representations of $\GL(2)$, or alternatively, of $\End\Sym^n V\cong\bigoplus_{k=0}^n\Sym^{2k}V$
as representations of $\SL(2)$.

In general, if $\mathfrak{g}$ is a Lie algebra acting on a finite dimensional vector space $W$, then $\mathfrak{g}$ acts on $W^\vee$ by $(X f)(w) = f(-Xw)$. Moreover, $\mathfrak{g}$ acts on endomorphisms $f:W\to W$ by $(Xf)(w) = X(f(w))+f(-Xw)$ and on $W\otimes W^\vee$ by $X(u\otimes w^\vee) = (X u)\otimes w^\vee+u\otimes (X w^\vee)$. We deduce that there exists a $\mathfrak{g}$-equivariant isomorphism $W\otimes W^\vee\cong \End(W)$ sending $u\otimes w^\vee$ to the endomorphism $x\mapsto w^\vee(x) u$. 

Let $L=\begin{pmatrix} 0&0\\1&0\end{pmatrix}$ and $R=\begin{pmatrix} 0&1\\ 0&0\end{pmatrix}$ be lowering and raising matrices in $\sl(2)$. We choose the basis $(e_1,e_2)$ of $V$ such that $L e_1=e_2, Le_2=0$ and $Re_1=0, Re_2=e_1$. For $m\geq 0$, we denote $V_m = \Sym^mV$ the representation of $\sl(2)$ of highest weight $m$. Define $g_{m,0},\ldots, g_{m,m}$ as the
basis of $V_m$, thought of as a subset of $V^{\otimes m}$, by
\[
g_{m,i}=\binom{m}{i}^{-1}\sum_{\underline{j}}e_{j_1}\otimes\cdots\otimes e_{j_m}:=e_1^{m-1}e_2^i
\]
where the sum is over $m$-tuples $\underline{j}=(j_1,\dots,j_m)\in\{1,2\}^m$ with $j_k=1$ for exactly $m-i$ values of $k$. (When $i\notin \{0,\ldots,m\}$ we simply define
$g_i^{(m)}$ to be 0.) The operators $L$ and $R$ act on $\Sym^mV$ via 
\begin{align*}
Lg_{m,i} &= (m-i)g_{m,i+1}\\
Rg_{m,i} &= ig_{m,i-1}.
\end{align*}

Let $m,n,p\in \mathbb{Z}$ such that $V_p$ appears as a
subrepresentation of $V_m\otimes V_n$, i.e.\ $p\in \{m+n, m+n-2,
\ldots, |m-n|\}$. Denote by $\Xi_{m,n,p}:V_m\otimes V_n\to V_p$
the nontrivial $\mathfrak{sl}(2)$-equivariant projection from
\cite[Lemma 2.7.4]{carter-flath-saito:6j} and
let
\[\psi_{m,n,p}=\left((\sqrt{-1})^{(m+n-p)/2}\frac{m!n!}{((m+n-p)/2)!}\right)\Xi_{m,n,p}\]
which will again be $\mathfrak{sl}(2)$-equivariant. Denote by
$C_{m,n,p}^{u,v,w}$ be the ``inverse Clebsch--Gordan'' coefficients such that
\[\psi_{m,n,p}(g_{m,u}\otimes g_{n,v})=\sum_{w=0}^p C_{m,n,p}^{u,v,w} g_{p,w}\]

\begin{lemma}\label{l:cg}
The coefficients $C_{m,n,p}^{u,v,w}$ satisfy the recurrence
relations
\begin{align*}
(p-w)C_{m,n,p}^{u,v,w}&=(m-u)C_{m,n,p}^{u+1,v,w+1}+(n-v)C_{m,n,p}^{u,v+1,w+1}\\
wC_{m,n,p}^{u,v,w}&=uC_{m,n,p}^{u-1,v,w-1}+vC_{m,n,p}^{u,v-1,w-1}
\end{align*}
and the initial values in the case when $u+v=(m+n-p)/2$ are given by 
\[C_{m,n,p}^{u,v,0}=(-1)^{u}(m-u)!(n-v)!\]
and $C_{m,n,p}^{u,v,w}=0$ for $w>0$. 
\end{lemma}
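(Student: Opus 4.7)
The plan is to use the $\mathfrak{sl}(2)$-equivariance of $\psi_{m,n,p}$ as the single engine driving the whole proof, exploiting it first to derive the recurrences by Leibniz calculations and then, via the weight grading, to cut down the initial values to essentially a single scalar.

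First I would apply the lowering operator $L$ to both sides of the defining identity $\psi_{m,n,p}(g_{m,u}\otimes g_{n,v}) = \sum_w C_{m,n,p}^{u,v,w}g_{p,w}$. The Leibniz rule on $V_m \otimes V_n$ gives
\[
L(g_{m,u}\otimes g_{n,v}) = (m-u)\,g_{m,u+1}\otimes g_{n,v} + (n-v)\,g_{m,u}\otimes g_{n,v+1},
\]
while $Lg_{p,w} = (p-w)g_{p,w+1}$. Using $L\psi_{m,n,p}=\psi_{m,n,p}L$ and matching the coefficient of $g_{p,w}$ on both sides (after the natural reindexing $w\mapsto w+1$ on one side) yields the first recurrence. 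The same calculation performed with the raising operator $R$ in place of $L$, using $Rg_{m,u}=u\,g_{m,u-1}$ and $Rg_{p,w}=w\,g_{p,w-1}$, yields the second recurrence.

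Next, for the vanishing of $C_{m,n,p}^{u,v,w}$ when $w>0$ and $u+v=k:=(m+n-p)/2$, I would run a weight argument: since $\psi_{m,n,p}$ is $\mathfrak{sl}(2)$-equivariant, it commutes with the Cartan $H=[R,L]$ and therefore preserves weight. As $g_{m,u}\otimes g_{n,v}$ has weight $m+n-2(u+v)$ while $g_{p,w}$ has weight $p-2w$, a nonzero coefficient forces $w=u+v-k$. Taking $u+v=k$ leaves only $w=0$, giving the desired vanishing.

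It remains to evaluate $C_{m,n,p}^{u,v,0}$ for $u+v=k$, which is the main obstacle because the recurrences alone fix this family of numbers only up to a single overall scalar. The plan here is to use the explicit highest-weight vector of $V_p \subset V_m\otimes V_n$, namely
\[
v_{\mathrm{hw}} = \sum_{u=0}^{k} (-1)^u \binom{k}{u} g_{m,u}\otimes g_{n,k-u},
\]
which a short calculation shows is annihilated by $R$. By equivariance and Schur's lemma, $\psi_{m,n,p}(v_{\mathrm{hw}})$ is a scalar multiple of $g_{p,0}$, and the normalization $\psi_{m,n,p} = (\sqrt{-1})^{k}(m!n!/k!)\,\Xi_{m,n,p}$ together with the specific definition of $\Xi_{m,n,p}$ in \cite{carter-flath-saito:6j} determines this scalar. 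Comparing with $\psi_{m,n,p}(v_{\mathrm{hw}}) = \sum_u (-1)^u\binom{k}{u}C_{m,n,p}^{u,k-u,0}\,g_{p,0}$ and inducting with the first recurrence evaluated at $w=0$ (substituted into the second recurrence at $w=1$, which, as a direct check shows, is consistent with the proposed formula) then propagates the value $C_{m,n,p}^{0,k,0}=m!(n-k)!$ to every $(u,v)$ with $u+v=k$, producing the stated formula $C_{m,n,p}^{u,v,0}=(-1)^u(m-u)!(n-v)!$.
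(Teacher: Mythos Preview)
Your derivation of the two recurrences is exactly the paper's: apply $L$ (respectively $R$) to the defining identity, use the Leibniz rule on the tensor factor, and match coefficients of $g_{p,w}$. Your weight argument for the vanishing $C_{m,n,p}^{u,v,w}=0$ when $u+v=(m+n-p)/2$ and $w>0$ is also fine, and is implicit in the paper's citation.

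Where you diverge is in the evaluation of $C_{m,n,p}^{u,v,0}$ for $u+v=k:=(m+n-p)/2$. The paper does not build the highest-weight vector, pin down a single scalar, and then propagate through the recurrences. Instead it simply invokes the content of \cite[Lemma~2.7.4]{carter-flath-saito:6j}, which already gives, for \emph{every} pair $(u,v)$ with $u+v=k$, the explicit value
\[
\Xi_{m,n,p}(g_{m,u}\otimes g_{n,v})=(\sqrt{-1})^{k}(-1)^{u}\,\frac{k!\,(m-u)!\,(n-v)!}{m!\,n!}\,g_{p,0}.
\]
Multiplying by the normalizing factor $(\sqrt{-1})^{k}m!n!/k!$ in the definition of $\psi_{m,n,p}$ yields $C_{m,n,p}^{u,v,0}=(-1)^{u}(m-u)!(n-v)!$ in one stroke. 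Your route is not wrong in principle, but it is circuitous: you still need the reference to fix the overall scalar, and once you are consulting that lemma you may as well read off the full answer rather than recover it by substituting the first recurrence at $w=0$ into the second at $w=1$ and running an induction. Your last paragraph is also a bit tangled on this point --- you invoke both the highest-weight vector (which yields only a single linear relation among the $C_{m,n,p}^{u,k-u,0}$) and the specific value $C_{m,n,p}^{0,k,0}=m!(n-k)!$ without saying clearly where the latter comes from; if it comes from the cited lemma at $u=0$, then the highest-weight detour is redundant.
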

\begin{proof}
Since the map $\psi_{m,n,p}$ is $\mathfrak{sl}(2)$-equivariant
we get
\begin{align*}
L\psi_{m,n,p}(g_{m,u}\otimes
g_{n,v})&=\psi_{m,n,p}((Lg_{m,u})\otimes g_{n,v}+g_{m,u}\otimes
(Lg_{n,v}))\\
&=(m-u)\psi_{m,n,p}(g_{m,u+1}\otimes g_{n,v})+(n-v)\psi_{m,n,p}(g_{m,u}\otimes
g_{n,v+1})\\
&=(m-u)\sum_w C_{m,n,p}^{u+1,v,w}g_{p,w}+(n-v)\sum_w C_{m,n,p}^{u,v+1,w}g_{p,w}.
\end{align*}
At the same time this is
\begin{align*}
L\psi_{m,n,p}(g_{m,u}\otimes
g_{n,v})&=\sum_w C_{m,n,p}^{u,v,w} Lg_{p,w}\\
&=\sum_w C_{m,n,p}^{u,v,w} (p-w)g_{p,w+1},
\end{align*}
which gives, after identifying the coefficients of $g_{p,w+1}$, the recurrence 
\[(p-w)C_{m,n,p}^{u,v,w}=(m-u)C_{m,n,p}^{u+1,v,w+1}+(n-v)C_{m,n,p}^{u,v+1,w+1}\]
The second recurrence formula is obtained analogously applying the operator $R$ to the definition of the coefficients $C_{m,n,p}^{u,v,w}$.

In \cite[Lemma
2.7.4]{carter-flath-saito:6j} $g_{m,u}$ is
denoted $e_{m/2, m/2-u}$ and the content of the lemma is that
\[\Xi_{m,n,p}(g_{m,u}\otimes g_{n,v}) =
\left((\sqrt{-1})^{(m+n-p)/2}(-1)^{u}\frac{((m+n-p)/2)!(m-u)!(n-v)!}{m!n!}\right)g_{p,0}\]
when $u+v=(m+n-p)/2$. The result follows from the definition of $\psi_{m,n,p}$.
\end{proof}

Lemma \ref{l:cg} gives an explicit map $V_n\otimes V_n\to
V_{2k}$ for $k\leq n$. To make explicit the map $\End(V_n)\to
V_{2k}$, we start with $V_n\cong V_n^\vee$, which is noncanonical as an isomorphism of vector spaces, but can be chosen uniquely (up to scalars) as follows to make the isomorphism $\mathfrak{g}$-equivariant.

Let $(e_1^\vee,e_2^\vee)$ be the basis of $V$ dual to $(e_1,e_2)$, in
which case the dual basis to $g_{n,i}$ is
\[g_{n,i}^\vee=\sum_{\underline{j}}e_{j_1}^\vee\otimes\cdots\otimes e_{j_n}^\vee,
\]
where again the sum is over $\underline{j}$ with $j_k=1$ for exactly $n-i$ values of $k$. The map $\phi$ sending $e_1\mapsto -e_2^\vee$ and $e_2\mapsto e_1^\vee$ is an $\mathfrak{sl}(2)$-equivariant isomorphism $V\cong V^\vee$ and this leads to the $\mathfrak{sl}(2)$-equivariant isomorphism $\phi_n:V_n\to V_n^\vee$ sending $g_{n,i}\mapsto (-1)^{n-i}\binom{n}{i}^{-1}g_{n,n-i}^\vee$. This implies
\begin{align*}
Lg_i^\vee &=-(n+1-i)g_{i-1}^\vee\\
Rg_i^\vee &=-(i+1)g_{i+1}^\vee.
\end{align*}

We remark that, as an $\sl(2)$-representation, $V_m$ has weights
$\{m,m-2,\ldots,-m\}$ and $L$ maps the weight $w$ eigenspace to
the weight $w-2$ eigenspace, while $R$ goes in the other
direction. Moreover, the vector $g_{n,i}\otimes g_{n,j}^\vee\in
V_n\otimes V_n^\vee$ has weight $2(j-i)$ and this implies that
\[v_{2k}=\sum_{i=0}^{n-k} \binom{k+i}{i}g_{n,i}\otimes
g_{n,k+i}^\vee\]
has (highest) weight $2k$. Computationally, this
vector suffices to make explicit the projection $\End(V_n)\to V_{2k}$. Indeed, $V_{2k}$ has basis
$\{(2k-i)!L^iv_{2k}|i=0,\ldots, 2k\}$ and this basis is
proportional to the $(g_{2k,0},\ldots,g_{2k,2k})$. Thus the
projection $\End(V_n)\cong V_n\otimes V_n^\vee\to V_{2k}$ can be
computed by finding the projection of $g_{n,i}\otimes
g_{n,j}^\vee$ to $V_{2k}$ in terms of the basis
$\{(2k-i)!L^iv_{2k}|i=0,\ldots, 2k\}$, which amounts to a matrix
inversion.

However, we will obtain a closed expression for the projection
map using the inverse Clebsch--Gordan coefficients from Lemma
\ref{l:cg}. The endomorphism $g_{n,i}\otimes g_{n,j}^\vee\in
V_n\otimes V_n^\vee\cong\End(V_n)$ projects to $V_{2k}$ and the
composite map is
\begin{align*}
\psi_{n,n,2k}\circ (1\otimes\phi_n^{-1})(g_{n,i}\otimes
g_{n,j}^\vee)&=(-1)^j\binom{n}{j}\psi_{n,n,2k}(g_{n,i}\otimes
g_{n,n-j})\\
&=(-1)^j\binom{n}{j}\sum_{w=0}^{2k}C_{n,n,2k}^{i,n-j,w}g_{2k,w}
\end{align*}

We arrive at the main result of this section:
\begin{proposition}\label{p:End to Sym^n}Suppose the representation $V$ has basis $(e_1,e_2)$ and $V_n=\Sym^nV$ has basis $(g_{n,0},\dots,g_{n,n})$. Suppose $T\in\End(\Sym^nV)$ has an upper
triangular matrix with $(a_0, \ldots, a_{n})$ on the diagonal with respect to this basis. Then the projection of $T$ to $V_{2k}$ is
\[\begin{pmatrix}*&\cdots &*&\displaystyle \sum_{i=0}^{n}B_{n,k,i}a_i&0&\cdots&0 \end{pmatrix}\]
with respect to the basis $(g_{2k,i})$ of $V_{2k}$ where
\begin{align*}
B_{n,k,i}=\sum_{a+b=k}(-1)^{a}\binom{n}{i}\binom{i}{a}\binom{n-i}{b}(n-i+a)!(i+b)!
\end{align*}
Here, the explicit coordinate is the middle one, i.e.\ the coefficient of $g_{2k,k}$, and we use the usual convention that $\binom{x}{y}=0$ if $y<0$ or $y>x$. 
\end{proposition}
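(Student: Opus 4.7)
The plan is to use $\sl(2)$-equivariance of the projection $\End V_n \to V_{2k}$ to isolate the coefficient of $g_{2k,k}$ as the only piece requiring work (the zeros being automatic from weight considerations), then evaluate that coefficient via the inverse Clebsch--Gordan recurrences of Lemma \ref{l:cg}.

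First, I write $T=\sum_{i\le j}T_{ij}\,g_{n,i}\otimes g_{n,j}^\vee$ under $\End V_n\cong V_n\otimes V_n^\vee$, with $T_{ii}=a_i$. Since $g_{n,i}$ has $\sl(2)$-weight $n-2i$ and $g_{n,j}^\vee$ has weight $2j-n$, each basis element $g_{n,i}\otimes g_{n,j}^\vee$ has weight $2(j-i)\ge 0$, with equality exactly on the diagonal. On the target, $g_{2k,w}$ has weight $2k-2w$, so the zero-weight line of $V_{2k}$ is spanned by $g_{2k,k}$ and the strictly negative weight space is spanned by $\{g_{2k,w}:w>k\}$. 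Equivariance of the projection then produces the claimed zeros at positions $w>k$ and shows that only the diagonal terms of $T$ can contribute to the $g_{2k,k}$-coefficient. Applying the formula $\psi_{n,n,2k}\circ(1\otimes\phi_n^{-1})(g_{n,i}\otimes g_{n,i}^\vee)=(-1)^i\binom{n}{i}\sum_w C_{n,n,2k}^{i,n-i,w}g_{2k,w}$ derived in the paragraph preceding the statement, that coefficient equals $\sum_i a_i(-1)^i\binom{n}{i}C_{n,n,2k}^{i,n-i,k}$, so it suffices to establish
\[(-1)^i\binom{n}{i}C_{n,n,2k}^{i,n-i,k}=B_{n,k,i}.\]

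Second, I evaluate $C_{n,n,2k}^{i,n-i,k}$ by iterating the second recurrence of Lemma \ref{l:cg}, namely $wC^{u,v,w}=uC^{u-1,v,w-1}+vC^{u,v-1,w-1}$, exactly $k$ times from $w=k$ down to $w=0$. Each of the $\binom{k}{a}$ length-$k$ paths that decrements $u$ at $a$ steps and $v$ at $b=k-a$ steps contributes the same product (since multiplication commutes), yielding
\[C_{n,n,2k}^{i,n-i,k}=\frac{1}{k!}\sum_{a+b=k}\binom{k}{a}\frac{i!(n-i)!}{(i-a)!(n-i-b)!}\,C_{n,n,2k}^{i-a,n-i-b,0}.\]
Along any such path $u+v-w$ is preserved, so the endpoint $(i-a,n-i-b,0)$ satisfies $(i-a)+(n-i-b)=n-k$ and lies in the support of the initial condition $C^{u,v,0}=(-1)^u(n-u)!(n-v)!$; hence $C_{n,n,2k}^{i-a,n-i-b,0}=(-1)^{i-a}(n-i+a)!(i+b)!$. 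Using the identity $\binom{k}{a}/k!=1/(a!b!)$ to rewrite the ratio of factorials as $\binom{i}{a}\binom{n-i}{b}$ and multiplying through by $(-1)^i\binom{n}{i}$ recovers exactly the definition of $B_{n,k,i}$. The only real obstacle is the combinatorial bookkeeping in this iterated recurrence; no nontrivial binomial identity is required beyond the elementary simplification above.
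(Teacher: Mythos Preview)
Your proof is correct and follows essentially the same route as the paper: the weight argument for the zeros and the reduction to the diagonal, the identification $B_{n,k,i}=(-1)^i\binom{n}{i}C_{n,n,2k}^{i,n-i,k}$, and then the evaluation of $C_{n,n,2k}^{i,n-i,k}$ by iterating the second recurrence of Lemma~\ref{l:cg} down to the initial values. The only cosmetic difference is that the paper records the intermediate inductive identity $C_{n,n,2k}^{i,n-i,k}=\sum_{a+b=m}\binom{i}{a}\binom{n-i}{b}\binom{k}{m}^{-1}C_{n,n,2k}^{i-a,n-i-b,k-m}$ for general $m\le k$ before setting $m=k$, whereas you reach the $m=k$ case in one shot via path counting; the content is the same.
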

\begin{proof}
That $T$ is upper triangular implies that it is a linear
combination of the form $\displaystyle \sum_{i\leq j}
\alpha_{i,j} g_{n,i}\otimes g_{n,j}^\vee$ (here
$a_i=\alpha_{i,i}$). But $g_{n,i}\otimes g_{n,j}^\vee$ has
weight $2(j-i)\geq 0$ and so $T\subset \bigoplus_{w\geq 0} (\End
V_n)_w$. Therefore its projection to $V_{2k}$ belongs to
$\bigoplus_{w\geq 0}(V_{2k})_w$, which is spanned by $g_{2k,u}$ for
$u\leq k$. This implies that the coefficients of $g_{2k,u}$ for
$u>k$ are $0$. Note that the projection $\End V_n\to (V_{2k})_0$
to the weight 0 eigenspace factors through $(\End V_n)_0\to
(V_{2k})_0$ and so the projection of $T$ to $(V_{2k})_0$ only
depends on the image $\sum a_i g_i\otimes g_i^\vee$ of $T$ in
$(\End\Sym^nV)_0$.

The coefficient $B_{n,k,i}$ is then the coefficient of
$g_{2k,k}$ in the projection to $V_{2k}$ of $g_{n,i}\otimes
g_{n,i}^\vee$. By the discussion above this is
\[B_{n,k,i}=(-1)^i\binom{n}{i}C_{n,n,2k}^{i,n-i,k}\]
Lemma \ref{l:cg} implies inductively that for $m\leq k$:
\[C_{n,n,2k}^{i,n-i,k}=\sum_{a+b=m}\binom{i}{a}\binom{n-i}{b}\binom{k}{m}^{-1}C_{n,n,2k}^{i-a,n-i-b,k-m}\]
and so
\[C_{n,n,2k}^{i,n-i,k}=\sum_{a+b=k}(-1)^{i-a}\binom{i}{a}\binom{n-i}{b}(n-i+a)!(i+b)!\]
and the formula follows.
\end{proof}

\begin{remark}\label{r:Bnki}
We end with a computation of the special values of $B_{n,k,i}$ which are involved in our formulae for $\mathcal{L}$-invariants.
In the main formula for $B_{n,k,i}$ in Proposition \ref{p:End to Sym^n}, the indices $a$ and $b$ satisfy $a+b=k$, $a\leq i$ and
$b\leq n-i$. When $k=n$ the only posibility is $(a,b)=(i,n-i)$, when $k=n-1$ the two possibilities are $a\in \{i-1,i\}$, and when $k=n-2$ the three possibilities are $a\in \{i-2,i-1,i\}$. Thus
\begin{align*}
B_{n,n,i}&=(-1)^i(n!)^2\binom{n}{i},\\
B_{n,n-1,i}&=\binom{n}{i}\left((-1)^{i-1}\binom{i}{i-1}\binom{n-i}{n-i}(n-1)!n!+(-1)^i\binom{i}{i}\binom{n-i}{n-1-i}n!(n-1)!\right)\\
&=(-1)^in!(n-1)!\binom{n}{i}(n-2i),\\
B_{n,n-2,i}&=(-1)^i\binom{n}{i}\left(\binom{i}{2}(n-2)!n!-\binom{i}{1}\binom{n-i}{1}((n-1)!)^2+\binom{n-i}{2}n!(n-2)!\right)\\
&=(-1)^i\binom{n}{i}(n-2)!(n-1)!(n^3-(4i+1)n^2+(4i^2+2i)n-2i^2)
\end{align*}
\end{remark}

\paragraph{{\bf Acknowledgements}}
We are grateful to Jo\"el Bella\"iche, Frank Calegari, Mladen Dimitrov, Matthew Emerton, Piper Harron, Ruochuan Liu, Dinakar Ramakrishnan, Sug Woo Shin, Claus Sorensen, and Jacques Tilouine. We also thank Paul Terwilliger for pointing us to the reference \cite{carter-flath-saito:6j} that lead us to the closed-form expression for the $B_{n,k,i}$.

\bibliographystyle{amsalpha}
\bibliography{biblio}

\end{document}